\numberwithin{figure}{section}
\theoremstyle{plain}
\newtheorem{thm}{\protect\theoremname}[section]
\theoremstyle{definition}
\newtheorem{rem}[thm]{\protect\remarkname}
\theoremstyle{definition}
\newtheorem{defn}[thm]{\protect\definitionname}
\theoremstyle{plain}
\newtheorem{prop}[thm]{\protect\propositionname}
\theoremstyle{plain}
\newtheorem{lem}[thm]{\protect\lemmaname}
\theoremstyle{plain}
\theoremstyle{plain}
\newtheorem{cor}[thm]{\protect\corollaryname}
\theoremstyle{definition}
\theoremstyle{definition}
\theoremstyle{definition}
\theoremstyle{definition}
\DeclareMathOperator{\diam}{diam}
\DeclareMathOperator{\dist}{dist}
\DeclareMathOperator{\Leb}{Leb}
\DeclareMathOperator{\supp}{supp}
\newcommand{\R}{\mathbb R}
\newcommand{\N}{\mathbb N}
\newcommand{\PP}{\mathbb P}
\newcommand{\eps}{\varepsilon}
\newcommand{\mH}{\mathcal{H}}
\newcommand{\udim}{\overline{\dim}_B\,}
\newcommand{\ldim}{\underline{\dim}_B\,}
\newcommand{\hdim}{\dim_H}
\newcommand{\lhdim}{\underline{\dim}_H}
\newcommand{\adim}{\dim_A}
\DeclareMathOperator{\Ker}{Ker}
\DeclareMathOperator{\Lip}{Lip}
\DeclareMathOperator{\Lin}{Lin}
\DeclareMathOperator{\Gr}{Gr}
\DeclareMathOperator{\Span}{span}
\providecommand{\conjecturename}{Conjecture}
\providecommand{\corollaryname}{Corollary}
\providecommand{\definitionname}{Definition}
\providecommand{\examplename}{Example}
\providecommand{\lemmaname}{Lemma}
\providecommand{\problemname}{Problem}
\providecommand{\propositionname}{Proposition}
\providecommand{\remarkname}{Remark}
\providecommand{\theoremname}{Theorem}
\providecommand{\taskname}{Task}
\def\vphi{\varphi}
\newcommand{\om}{\omega}
\def\N{{\mathbb N}}
\def\Jk{{\mathcal J}}
\def\be{\begin{equation}}
	\def\ee{\end{equation}}
\author[K. Bara\'{n}ski]{Krzysztof Bara\'{n}ski$^*$}
\address{$^*$Institute of Mathematics, University of Warsaw, ul.~Banacha 2, 02-097 Warszawa, Poland}
\email{baranski@mimuw.edu.pl}
\author[Y. Gutman]{Yonatan Gutman$^\dagger$}
\address{$^\dagger$Institute of Mathematics, Polish Academy of Sciences,
	ul.~\'Sniadeckich 8, 00-656 Warszawa, Poland}
\email{gutman@impan.pl}
\author[A. \'{S}piewak]{Adam \'{S}piewak$^\dagger$$^\S$}
\address{$^\S$Department of Mathematics, Bar-Ilan University, Ramat Gan, 5290002, Israel}
\email{ad.spiewak@gmail.com}
\title[Regularity of almost-surely injective projections]{Regularity of almost-surely injective projections in Euclidean spaces}
\date{\today}
\begin{document}

\begin{abstract} In a previous work we proved that if a finite Borel measure $\mu$ in a Euclidean space has Hausdorff dimension smaller than a positive integer $k$, then the orthogonal projection onto almost every $k$-dimensional linear subspace is injective on a set of full $\mu$-measure. In this paper we study the regularity of the inverses of these projections and prove that if $\mu$ has a compact support $X$ such that (respectively) the Hausdorff, upper box-counting or Assouad dimension of $X$ is smaller than $k$, then the inverse is (respectively) continuous, pointwise $\alpha$-H\"older for some $\alpha \in (0,1)$ or pointwise $\alpha$-H\"older for every $\alpha \in (0,1)$. The results generalize to the case of typical linear perturbations of Lipschitz maps and strengthen previously known ones in the lossless analog compression literature. We provide examples showing the sharpness of the statements. Additionally, we construct a non-trivial measure on the plane which admits almost-surely injective projections in every direction, and show that no homogeneous self-similar measure has this property. 
\end{abstract}

\maketitle

\section{Introduction and main results}

\subsection{Projections of sets and measures in Euclidean spaces}\label{subsec:proj}

The study of geometric and dimensional properties of the images of a set $X \subset \R^N$, $N \in \N$, under orthogonal projections
\[
P_V\colon \R^N \to V 
\]
onto $k$-dimensional linear spaces $V \subset \R^N$, is a well-known subject of interest in geometric measure theory. The space of all $k$-dimensional linear subspaces of $\R^N$ (or, equivalently, the space of corresponding orthogonal projections) forms the Grassmannian $\Gr(k, N)$, which has a structure of a $k(N-k)$-dimensional manifold, equipped with the standard rotation-invariant (Haar) measure. Throughout the paper, the terms `almost every linear space' or `almost every projection' will be used in relation to this measure.

A classical result in this area is the celebrated Marstrand--Mattila theorem, proved in \cite{Marstrand,Mattila-proj}. 

\begin{thm}[{\bf Marstrand--Mattila Projection Theorem}{}]\label{thm:marstrand-sets}Let $X$ be a Borel set in $\R^N$. Then the following hold.
\begin{enumerate}[$($i$)$]
\item $\hdim P_V (X) = \min\{k, \hdim X\}$ for almost every $k$-dimensional linear subspace $V$ of $\R^N$.
\item If $\hdim X > k$, then $P_V (X)$ has positive $k$-dimensional Hausdorff measure for almost every $k$-dimensional linear subspace $V$ of $\R^N$. 
\end{enumerate}
\end{thm}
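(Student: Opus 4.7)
The plan is to follow Kaufman's potential-theoretic approach, supplemented by Fourier-analytic arguments for part (ii). The upper bound in (i) is immediate and holds for every $V$: since $P_V$ is $1$-Lipschitz and $P_V(X) \subset V$ with $\dim V = k$, one has $\hdim P_V(X) \leq \min\{k, \hdim X\}$. The genuine content of the theorem therefore lies in the matching lower bound and in the positive-measure assertion.

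For the lower bound in (i), I would fix any $s < \min\{k, \hdim X\}$ and invoke Frostman's lemma to obtain a compactly supported Borel probability measure $\mu$ on $X$ with finite $s$-energy
\[
I_s(\mu) = \iint |x - y|^{-s}\,d\mu(x)\,d\mu(y) < \infty.
\]
Writing $\mu_V = (P_V)_* \mu$ and denoting by $\gamma_{k,N}$ the Haar probability measure on $\Gr(k,N)$, the goal is to show $I_s(\mu_V) < \infty$ for $\gamma_{k,N}$-almost every $V$, because the energy criterion then yields $\hdim P_V(X) \geq \hdim \mu_V \geq s$. By Fubini this reduces to the Grassmannian identity
\[
\int_{\Gr(k,N)} |P_V(z)|^{-s}\,d\gamma_{k,N}(V) = c_{k,N,s}\,|z|^{-s}, \qquad z \in \R^N \setminus \{0\},
\]
valid precisely for $0 \le s < k$, which integrated against $d\mu(x)\,d\mu(y)$ gives $\int_{\Gr(k,N)} I_s(\mu_V)\,d\gamma_{k,N}(V) = c_{k,N,s}\,I_s(\mu) < \infty$. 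Taking a countable sequence $s_n \nearrow \min\{k, \hdim X\}$ and intersecting the resulting full-measure subsets of $\Gr(k,N)$ concludes (i).

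For (ii), assume $\hdim X > k$ and pick $s \in (k, \hdim X)$ together with a Frostman measure $\mu$ supported on a compact subset of $X$ with $I_s(\mu) < \infty$; since $\mu$ has compact support and $s > k$, it follows automatically that $I_k(\mu) < \infty$. The strategy is to show that $\mu_V$ is absolutely continuous with respect to $\mathcal{H}^k|_V$ and has an $L^2$ density for $\gamma_{k,N}$-almost every $V$, which forces $\mathcal{H}^k(P_V(X)) > 0$ since $\mu_V$ is supported on $P_V(X)$ and $\mu_V(V) = 1$. Using $\widehat{\mu_V}(\xi) = \widehat{\mu}(\xi)$ for $\xi \in V$ and the polar-type identity
\[
\int_{\Gr(k,N)} \int_V |\widehat{\mu}(\xi)|^2\,d\xi\,d\gamma_{k,N}(V) = c'_{k,N} \int_{\R^N} |\widehat{\mu}(\eta)|^2\, |\eta|^{k-N}\,d\eta = c''_{k,N}\,I_k(\mu),
\]
the left-hand side is finite, so $\widehat{\mu_V} \in L^2(V)$ almost surely and Plancherel delivers the desired $L^2$ density.

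The main technical obstacle I anticipate is establishing the Grassmannian integral identity above. By rotation-invariance it reduces to showing that $\int_{\Gr(k,N)} |P_V(e)|^{-s}\,d\gamma_{k,N}(V)$ is finite for a fixed unit vector $e$, which in turn requires a parametrisation of $\Gr(k,N)$ adapted to the orthogonal action and a careful Jacobian computation near the codimension-$k$ submanifold $\{V : e \perp V\}$ where $P_V(e) = 0$; the sharp threshold $s < k$ emerges from integrability at this submanifold. Once this identity and its Fourier-side analogue are in hand, the remainder of the proof is a standard combination of Frostman's lemma, Fubini's theorem, and Plancherel.
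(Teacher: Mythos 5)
The paper does not prove Theorem~\ref{thm:marstrand-sets}; it is stated as a classical background result and cited to Marstrand and Mattila, so there is no in-paper proof to compare against. Your argument is correct and is the now-standard potential-theoretic proof: the upper bound from Lipschitz-ness, Frostman's lemma plus the Grassmannian kernel identity $\int_{\Gr(k,N)} |P_V z|^{-s}\,d\gamma_{k,N}(V) = c_{k,N,s}|z|^{-s}$ for $0\le s<k$ (Kaufman's method) for the lower bound in (i), and the Fourier--Plancherel argument together with the generalized polar decomposition and the energy formula $I_k(\mu)\asymp\int|\widehat\mu(\eta)|^2|\eta|^{k-N}\,d\eta$ for (ii). Two minor points worth making explicit if you write this out in full: first, Frostman's lemma for a general Borel (rather than compact) set $X$ uses that Borel sets in $\R^N$ are analytic, so one can extract a compact subset of the required dimension; second, your reduction $I_k(\mu)<\infty$ from $I_s(\mu)<\infty$ with $s>k$ does need the compact support, since one must split the double integral at $|x-y|=1$ and bound the far-field contribution by $\mu(\R^N)^2$. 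The sharp threshold $s<k$ in the Grassmannian identity comes, as you note, from the Beta$(k/2,(N-k)/2)$ law of $|P_V e|^2$ for Haar-random $V$, whose density behaves like $t^{k/2-1}$ near $0$. Note that this is not how Marstrand originally proved the planar case in 1954 (his argument was more directly geometric); the potential-theoretic reformulation is due to Kaufman and the higher-dimensional/Fourier machinery to Mattila, which is why the paper cites both.
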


Here and in the sequel $\dim_H X$ denotes the Hausdorff dimension of the set $X$, while $\mH^k$ denotes the $k$-dimensional Hausdorff measure. 

A number of further results related to Marstrand--Mattila projection theorem have been obtained, including versions valid for various types of dimension, and estimates on the size of the set of exceptional projections, see e.g.~\cite{FH97,PS00,Mattila04,B10,FO14,FO17,O21,dabrowski2021integrability} and the references therein, as well as \cite{FFJSixty} for a comprehensive survey. In particular, a version of Marstrand--Mattila's projection theorem for measures has been established (see \cite{HuTaylorProjections, HuntKaloshin97}). 

\begin{thm}[{\bf Marstrand--Mattila Projection Theorem for measures}{}]\label{thm:marstrand}
Let $\mu$ be a finite Borel measure in $\R^N$. Then the following hold.
\begin{enumerate}[$($i$)$]
\item $\hdim P_V \mu = \min\{k, \hdim \mu\}$ and $\lhdim P_V \mu = \min\{k, \lhdim \mu\}$ for almost every $k$-dimen\-sional linear subspace $V$ of $\R^N$.
\item If $\lhdim \mu > k$, then $P_V \mu$ is absolutely continuous with respect to  $k$-dimensional Hausdorff measure for almost every $k$-dimensional linear subspace $V$ of $\R^N$.
\end{enumerate}
\end{thm}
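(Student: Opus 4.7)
The plan is to adapt the classical potential-theoretic proof of the Marstrand--Mattila theorem for sets to the setting of measures. The backbone is the standard energy characterization of Hausdorff dimension of measures: for a finite Borel measure $\nu$ on $\R^k$, finiteness of the $s$-energy
$$I_s(\nu) = \iint |x-y|^{-s} \, d\nu(x)\, d\nu(y)$$
implies $\lhdim \nu \geq s$, and conversely, $\lhdim \nu > s$ yields a compact set $A$ with $\nu(A)>0$ and $I_s(\nu|_A) < \infty$; a parallel statement holds for $\hdim$ after a standard restriction to conull subsets.

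For part (i), the key integral-geometric identity is
$$\int_{\Gr(k,N)} |P_V z|^{-s} \, d\gamma_{k,N}(V) = c(s,k,N)\, |z|^{-s}, \qquad z \in \R^N \setminus \{0\}, \quad 0 < s < k,$$
where $\gamma_{k,N}$ denotes the normalized Haar measure on $\Gr(k,N)$; by rotation invariance this reduces to the finiteness of a single integral, which holds precisely when $s < k$. Fubini's theorem then gives
$$\int_{\Gr(k,N)} I_s(P_V \mu) \, d\gamma_{k,N}(V) = c(s,k,N)\, I_s(\mu),$$
so $I_s(\mu) < \infty$ forces $I_s(P_V \mu) < \infty$ for $\gamma_{k,N}$-a.e.~$V$, whence $\lhdim P_V \mu \geq s$. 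Applying this to suitable restrictions $\mu|_A$ and taking $s$ along a countable sequence approaching $\min\{k, \lhdim \mu\}$ (respectively $\min\{k, \hdim \mu\}$) yields the lower bounds in (i); the matching upper bounds are immediate since $P_V$ is $1$-Lipschitz and its image lies in the $k$-dimensional space $V$.

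For part (ii), assuming $\lhdim \mu > k$, I would exhaust $\mu$ by an increasing sequence of compact sets $A_j$ with $I_{s_j}(\mu|_{A_j}) < \infty$ for some $s_j > k$; since each $A_j$ is bounded, this also gives $I_k(\mu|_{A_j}) < \infty$. The Plancherel-type identity
$$\int_{\Gr(k,N)} \int_V |\widehat{P_V \nu}(\xi)|^2 \, d\mH^k(\xi) \, d\gamma_{k,N}(V) = c(k,N) \int_{\R^N} |\hat\nu(\eta)|^2 \, |\eta|^{k-N} \, d\eta \;\asymp\; I_k(\nu),$$
applied to $\nu = \mu|_{A_j}$ shows that $\widehat{P_V(\mu|_{A_j})} \in L^2(V)$, and hence $P_V(\mu|_{A_j}) \ll \mH^k$, for $\gamma_{k,N}$-a.e.~$V$; intersecting the resulting full-measure subsets of $\Gr(k,N)$ over $j$ gives $P_V \mu \ll \mH^k$ for a.e.~$V$. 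The main obstacle I anticipate is not in the two identities themselves, which are standard computations, but in the bookkeeping needed to pass from pointwise-in-$s$ energy estimates on restricted pieces to a conclusion valid at the limiting exponent $\min\{k, \lhdim \mu\}$ (respectively $\min\{k, \hdim \mu\}$) and for $\mu$ itself; once this is arranged along a countable sequence of exponents $s$ and exhausting sets $A_j$, everything falls into place.
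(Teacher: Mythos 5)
The paper states Theorem~\ref{thm:marstrand} as a known result and does not prove it, instead citing Hu--Taylor and Hunt--Kaloshin; your sketch reproduces the standard potential-theoretic argument found in those references and in Mattila's book (the energy identity over the Grassmannian for part~(i), the Fourier--Plancherel identity with exponent $k-N$ for part~(ii)). The plan is correct, including the key reductions you flag: using the local-dimension characterizations $\hdim\mu=\operatorname{ess\,sup}\underline{d}_\mu$ and $\lhdim\mu=\operatorname{ess\,inf}\underline{d}_\mu$ together with Egorov to produce compact pieces $A$ with $I_s(\mu|_A)<\infty$, then passing from $\lhdim P_V(\mu|_A)\ge s$ to the stated bounds on $P_V\mu$ by observing that a $P_V\mu$-conull set is also $P_V(\mu|_A)$-conull, and finally intersecting a countable family of full-measure subsets of $\Gr(k,N)$ over exponents $s$ and exhausting sets $A_j$.
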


Here $P_V \mu$ denotes the image of $\mu$ under $P_V$, while $\hdim \mu$ and $\lhdim \mu$ denote, respectively, the upper and lower Hausdorff dimensions of a measure $\mu$ (see Section~\ref{defn:dim} for the definitions).

\subsection{Injective and almost-surely injective projections}\label{subsec:inj_proj}
 
Apart from considering the dimension of the images of $X$ under orthogonal projections $P_V$, another line of research is to study under which conditions the projections $P_V$ are injective on $X$, at least for typical $V$.\footnote{Another question one may consider is when a projection $P_V$ is injective on $X$ for  \textit{some}  $V$. In \cite[Example~III.4]{LosslessAnalogCompression} one finds a construction of a compactly supported measure $\mu$ in $\R^3$ with $\hdim \mu =2$, such that there is a projection $P\colon \R^3\rightarrow \R$ injective on $\supp \mu$.} Note that if this occurs, then $P_V$ provides a \emph{topological embedding} of $X$ into a $k$-dimensional linear space $V$, and $X$ can be considered as the graph of a function from $P_V(X) \subset V \simeq \R^k$ to $V^\perp \simeq \R^{N-k}$.

It is known that if $X$ is a compact subset of $\R^N$ and $2\udim X < k$, where $\udim$ denotes the upper box-counting (Minkowski) dimension, then $P_V$ is injective for a typical $k$-dimensional linear space $V \subset \R^N$. This fact is commonly referred to as the \emph{Ma\~{n}\'e projection theorem}. Indeed, Ma\~{n}\'e proved this result for topologically generic projections in \cite{Mane81}, while a version valid for almost every projection (or, equivalently, for almost every linear map $L \colon \R^N \to \R^k$) was presented in \cite{SYC91,Rob11}. Notice that this agrees with the well-known Menger--N\"obeling embedding theorem (see e.g.~\cite[Theorem~5.2]{HW41}, which states that for a compact metric space $X$ with Lebesgue covering dimension at most $k$, a generic continuous transformation $\phi\colon X \to \R^{2k+1}$ is injective.

Given a projection $P_V$ which is injective on a set $X$, it is natural to ask what is the regularity of its \emph{inverse}
\[
(P_V|_X)^{-1} \colon P_V(X) \to X.
\]
In \cite[Theorem~3.1]{HK99}, Hunt and Kaloshin proved that if $X \subset \R^N$ is compact and $2\udim X < k$, then for almost every $k$-dimensional linear space $V \subset \R^N$, the projection $P_V$ restricted to $X$ has an $\alpha$-H\"older inverse for $0 < \alpha < 1 - \frac{2 \udim X}{k}$ (see also \cite{BAEFN93,EFNT94} for earlier results in this direction and \cite{Rob11} for a detailed exposition). In \cite[Theorem~2.1]{RossiShmerkinHolderCoverings}, Rossi and Shmerkin gave upper bounds on the Hausdorff dimension of the set of exceptional projections. Furthermore, the regularity of the inverses can be improved, if the assumption $2\udim X < k$ is replaced by $2\adim X < k$, where $\adim$ is the Assouad dimension (see Definition~\ref{defn:adim}). More precisely, in this case almost all projections onto $k$-dimensional linear subspaces of $\R^N$ have inverses which are $\alpha$-H\"older for any $\alpha \in (0,1)$, see \cite[Theorem~5.2]{olson2002bouligand} and \cite[Theorem~9.18]{Rob11}. The problem of the existence of linear embeddings and the regularity of their inverses was also studied for finite-dimensional subsets of Banach spaces, see \cite[Chapters~5--9]{Rob11} and references therein.

It is known that in general, the bound $2\udim X < k$ in the Ma\~{n}\'e projection theorem cannot be diminished (see \cite[Example~V.3]{HW41}), and $\udim$ cannot be replaced by $\hdim$ (see \cite[Appendix]{SYC91})\footnote{In fact, these examples show that even the existence of an orthogonal projection injective on $X$ does not hold under weaker assumptions.}. However, the situation changes if instead of the injectivity of $P_V$ on $X$, one is interested in \emph{almost sure injectivity} of $P_V$, i.e.~the injectivity of $P_V$ on a full $\mu$-measure Borel subset of $X$, according to a given Borel measure $\mu$ on $X$. In our previous paper \cite[Corollary~3.4]{BGS20}, strengthening a result by Alberti, B\"olcskei, De Lellis, Koliander and Riegler \cite{LosslessAnalogCompression}, we showed the following.

\begin{thm}[{\bf Probabilistic injective projection theorem}{}]
Let $X$ be a Borel subset of $\R^N$ equipped with a finite or $\sigma$-finite Borel measure $\mu$ and assume $\mH^k(X) = 0$ for some positive integer $k \le N$
$($in particular, it is enough to assume $\hdim X < k)$. Then for almost every $k$-dimensional linear subspace $V$ of $\R^N$, the orthogonal projection $P_V$ is injective on a full $\mu$-measure Borel set $X_V \subset X$.
\end{thm}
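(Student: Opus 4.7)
The plan is to take
\[
X_V \;=\; \bigl\{\, x \in X : (x+V^\perp)\cap X = \{x\} \,\bigr\},
\]
which is automatically $P_V$-injective: if $x,x'\in X_V$ share a projection, then $x'-x\in V^\perp$, so $x'\in (x+V^\perp)\cap X = \{x\}$. Hence the theorem reduces to showing that for Haar-a.e.~$V \in \Gr(k,N)$, the complement
\[
B_V \;=\; \bigl\{\, x \in X : (X - x) \cap V^\perp \not\subset \{0\} \,\bigr\}
\]
is $\mu$-null. Note that $B_V$ is analytic (as the projection to $X$ of the Borel set of pairs $(x,y) \in X\times X$ with $x\ne y$ and $P_V(x)=P_V(y)$), so once it is known to be $\mu$-null, universal measurability lets us enlarge it to a Borel null set and define $X_V$ as the corresponding Borel full-measure subset.

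The heart of the argument is the following Key Lemma: \emph{if $E \subset \R^N$ is Borel with $\mathcal{H}^k(E) = 0$, then for Haar-a.e.~$V \in \Gr(k,N)$ one has $V^\perp \cap E \subset \{0\}$.} My plan is to combine a covering definition of $\mathcal{H}^k$ with the Grassmannian estimate
\[
\mathrm{Haar}\bigl\{\, V \in \Gr(k,N) : V^\perp \cap B(u,r) \neq \emptyset \,\bigr\} \;\leq\; C\,(r/|u|)^k
\]
valid for $r \le |u|/2$, which expresses that ``$V^\perp$ contains a fixed direction'' is a codimension-$k$ condition on $\Gr(k,N)$ whose $\varepsilon$-tube has Haar measure of order $\varepsilon^k$. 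Decomposing $E\setminus\{0\}$ into dyadic annuli $A_j = \{2^{-j-1}\le |v| \le 2^{-j}\}$, the hypothesis $\mathcal{H}^k(E\cap A_j)=0$ permits, for any $\varepsilon>0$, ball covers of $E\cap A_j$ with $\sum_i (r_i^{(j)})^k < \varepsilon\,2^{-jk-j}$; plugging these into the estimate and summing over $i,j$ bounds the Haar measure of $\{V : V^\perp\cap(E\setminus\{0\})\ne\emptyset\}$ by a constant multiple of $\varepsilon$, so letting $\varepsilon\to 0$ closes the lemma. Applied with $E=X-x$ (still of $\mathcal{H}^k$-measure zero by translation invariance), the lemma furnishes for each $x\in X$ a Haar-conull set of $V$'s with $(x+V^\perp)\cap X=\{x\}$; Fubini on $\Gr(k,N)\times X$ then swaps the quantifiers and yields that $B_V$ is $\mu$-null for Haar-a.e.~$V$.

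The main obstacle is the Haar-measure estimate inside the Key Lemma, and in particular securing the constant $C$ uniformly in $u$. The cleanest route is probably via the incidence flag $\{(u,V) \in S^{N-1}\times\Gr(k,N) : u \in V^\perp\}$ equipped with its natural rotation-invariant measure: a co-area/Jacobian computation there reduces the desired bound to the smoothness of the map $V\mapsto|P_V u|$ near its zero set $\{V : u \in V^\perp\}$, which is a codimension-$k$ submanifold of $\Gr(k,N)$.
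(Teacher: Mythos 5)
Your proposal is correct and takes essentially the same route as the paper: fix $x$, cover $X\setminus\{x\}$ (equivalently $E=X-x$ away from $0$) by balls of small total $k$-content, invoke the Grassmannian tube estimate $\mathrm{Haar}\{V:V^\perp\cap B(u,r)\neq\emptyset\}=\mathrm{Haar}\{V:\|P_Vu\|\le r\}\lesssim(r/|u|)^k$ --- the paper's Lemma~\ref{lem: key_ineq_linear} is exactly its linear-map avatar --- sum the resulting bounds, and finish with Fubini; the paper splits $X\setminus\{x\}$ into $K_n=\{y:\|x-y\|\ge 1/n\}$ where you use dyadic annuli, a cosmetic difference. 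The only bookkeeping slip is that your per-annulus budget $\varepsilon\,2^{-jk-j}$ yields a convergent sum only over $j\ge 0$, so the argument as written handles only the part of $E$ inside the unit ball; since $X$ is merely Borel (not compact) you need either a $j$-dependent budget $\varepsilon_j$ with $\sum_j\varepsilon_j\,2^{(j+1)k}<\infty$, or (as the paper does) to fix one lower bound $\|y-x\|\ge 1/n$ at a time, send the total budget to $0$ at fixed $n$, and then intersect over $n$ --- an immediate fix.
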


See also Theorem~\ref{thm: hdim injectivity} for a more general statement. Consequently, for a given set $X$, the minimal \emph{embedding dimension} $k$ for a typical projection can be reduced by half when considered in a `probabilistic' setup compared to a `deterministic' one. 
 
We note that the condition $\hdim \mu \leq k$ is necessary for a typical almost sure injectivity of $P_V$. Indeed, we have the following.

\begin{prop}\label{prop: dim bound}
Let $\mu$ be a finite Borel measure in $\R^N$ with $\hdim \mu > k$. Then for almost every $k$-dimensional linear space $V \subset \R^N$ and every Borel set $Y \subset \R^N$ of full $\mu$-measure, the orthogonal projection $P_V$ is not injective on $Y$.
\end{prop}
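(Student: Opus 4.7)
The plan is to reduce the statement to the case of lower Hausdorff dimension strictly greater than $k$, and then apply the slicing counterpart of the Marstrand--Mattila projection theorem (Theorem~\ref{thm:marstrand}). First I would invoke the standard characterization of the upper Hausdorff dimension $\hdim \mu$ as the $\mu$-essential supremum of the lower local dimension $x \mapsto \liminf_{r \to 0} \log \mu(B(x,r))/\log r$. Since $\hdim \mu > k$, this yields some $s > k$ and a Borel set of positive $\mu$-measure on which the lower local dimension is pointwise at least $s$. An Egorov-type refinement, applied to the increasing family $B_n = \{x : \mu(B(x,r)) \leq r^{s'} \text{ for all } r < 1/n\}$ with any fixed $s' \in (k, s)$, then produces a Borel set $A$ with $\mu(A) > 0$ such that $\mu(B(x,r)) \leq r^{s'}$ for every $x \in A$ and every $r$ below a uniform threshold; in particular $\lhdim(\mu|_A) \geq s' > k$.

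Next I would apply the Marstrand--Mattila slicing theorem for measures: since $\lhdim(\mu|_A) > k$, for almost every $V \in \Gr(k, N)$ and for $P_V(\mu|_A)$-almost every $y \in V$, the conditional measure $(\mu|_A)_V^y$ obtained by disintegrating $\mu|_A$ along $P_V$ satisfies $\lhdim((\mu|_A)_V^y) \geq \lhdim(\mu|_A) - k > 0$. For this full-measure set of $y$'s, the slice is in particular non-atomic, hence not a Dirac mass.

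To conclude, fix such a typical $V$ and assume, toward a contradiction, that $P_V$ is injective on some Borel $Y \subset \R^N$ of full $\mu$-measure. Since $\mu|_A \leq \mu$, the set $Y$ is also of full $\mu|_A$-measure, so for $P_V(\mu|_A)$-almost every $y$ the fiber intersection $P_V^{-1}(y) \cap Y$ is a singleton $\{\phi(y)\}$, and the slice $(\mu|_A)_V^y$ (being $\mu|_A$-concentrated on $Y$) must be concentrated on this single point, forcing $(\mu|_A)_V^y = \delta_{\phi(y)}$. This contradicts the non-atomicity above, so no such $Y$ can exist.

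The main obstacle is locating the slicing theorem in exactly the form above (lower Hausdorff dimensions of the disintegrated measures). If a direct reference is unavailable, I would derive it from an energy computation: the Frostman bound on $A$ yields finiteness of the $s'$-energy of $\mu|_A$, and a Fubini argument over $\Gr(k, N) \times V$ splits this energy into a $k$-dimensional contribution from the projection and an $(s'-k)$-dimensional contribution from the slice, showing that typical slices have finite $(s'-k)$-energy and therefore positive Hausdorff dimension.
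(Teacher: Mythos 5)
Your proof is correct and follows essentially the same route as the paper: both restrict $\mu$ to a Borel set where the lower local dimension is uniformly $\ge s' > k$ (the paper calls this an application of Frostman's lemma, you phrase it as an Egorov-type refinement), then invoke the J\"{a}rvenp\"{a}\"{a}--Mattila slicing theorem to see that typical conditional measures have positive lower Hausdorff dimension and hence are not Dirac masses, and conclude via the equivalence between almost-sure injectivity and Dirac slices. The only cosmetic difference is that the paper packages the final contradiction through its Lemma~\ref{lem:as injectivity Diracs} rather than arguing it directly.
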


The proof of Proposition~\ref{prop: dim bound} is presented in Section~\ref{sec:projections}.

\begin{rem}
Note that one cannot extend Proposition~\ref{prop: dim bound} to projections onto all $k$-dimensional linear subspaces $V \subset \R^N$. An obvious counterexample is the lift of $1$-dimensional Lebesgue measure on the interval $[0,1]$ in the $x$-axis on the plane to a graph of a Borel function $[0,1] \to \R$ with Hausdorff dimension greater than $1$ (e.g.~the graph of a continuous nowhere-differentiable Weierstrass-type function), which projects injectively onto the $x$-axis. On the other hand, if $\mu$ is $s$-\emph{analytic} for $s > k$, then no projection onto a $k$-dimensional linear space $V \subset \R^N$ is injective on a set of positive $\mu$-measure, see \cite[Corollary~IV.2]{LosslessAnalogCompression}.
\end{rem}

In the border case $\hdim \mu = k$, if $\mu$ is not singular with respect to the $k$-dimensional Hausdorff measure, then different types of behaviour may occur, e.g.~for $k=1$ and $1$-dimensional Hausdorff measure on an interval in $\R^2$, a projection onto a typical line is injective on its support, while for $1$-dimensional Hausdorff measure on a circle in $\R^2$, no Lipschitz map $\phi \colon  \R^2 \to \R$ is injective on a set of full measure, see \cite[Example~3.5]{BGS20}. Understanding the border case is closely related to the following open problem in geometric measure theory: if $X \subset \R^2$ is a $\mH^1$-measurable, purely unrectifiable set with $\mH^1(X)<\infty$, is it true that for $\mH^1$-almost every $x \in X$, almost every line passing through $x$ meets $X$ only at $x$? Refer to \cite[Problem~12]{Mattila04} for the original formulation and to \cite[Conjecture~4.15]{MattilRectifSurvey2023} for a more recent account. While the problem   remains unsolved in the general case, it has been answered affirmatively for some classes of sets, e.g. for self-similar 1-sets satisfying the Open Set Condition, see \cite{SimonSolomyakVisibility}.

In this paper, assuming suitable bounds on the dimension of a compact set $X \subset \R^N$, we study the question of the regularity of the \emph{almost sure inverse} map
\[
(P_V|_{X_V})^{-1} \colon P_V(X_V) \to X_V
\]
for a typical $k$-dimensional linear space $V \subset \R^N$, where $P_V$ is injective on a full $\mu$-measure Borel set $X_V \subset X$. 

\begin{defn} A map $\phi\colon A \to \R^N$, where $A \subset \R^k$, is \emph{pointwise} $\alpha$-\emph{H\"older}, if for every $x \in A$ there exists $c_x > 0$ such that 
\[
\|\phi(x)-\phi(y)\| \le c_x \|x-y\|^\alpha
\]
for every $y \in A$.
\end{defn}

The basic result of this paper is the following.

\begin{thm}[{\bf Regularity of the inverse of almost-surely injective projections}{}]
\label{thm:injectivity-proj}
Let $X$ be a compact subset of $\R^N$ and let $\mu$ be a finite Borel  measure supported on $X$. Consider orthogonal projections $P_V\colon \R^N \to V$ onto $k$-dimensional linear spaces $V \subset \R^N$. Then the following hold.
	\begin{enumerate}[$($i$)$]
	
	\item\label{it:hdim proj}
	If $\hdim X < k$, then for almost every $V$ there exists a full $\mu$-measure Borel set  $X_V \subset X$ such that the restriction of $P_V$ to $X_V$ is injective with continuous inverse.
	
	\item\label{it:bdim proj}
	If $\udim X < k$, then for almost every $V$ there exists a full $\mu$-measure Borel set  $X_V \subset X$ such that the restriction of $P_V$ to $X_V$ is injective with inverse which is pointwise $\alpha$-H\"{o}lder for every $\alpha \in \big(0,1 -  \frac{\udim X}{k}\big)$.
	
	\item\label{it:adim proj}
	If $\adim X < k$, then for almost every $V$ there exists a full $\mu$-measure Borel set  $X_V \subset X$ such that the restriction of $P_V$ to $X_V$ is injective with inverse which is pointwise $\alpha$-H\"{o}lder for every $\alpha \in (0,1)$.
	
	\end{enumerate}
\end{thm}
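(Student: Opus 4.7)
The three parts share a common setup. For almost every $V \in \Gr(k,N)$, the Probabilistic injective projection theorem yields a Borel full $\mu$-measure set on which $P_V$ is injective. We may take
\[
X_V := \{x \in X : P_V^{-1}(P_V(x)) \cap X = \{x\}\}
\]
(which has full $\mu$-measure by the same theorem, and contains a Borel subset of full measure). \emph{Part (i)} then follows from compactness of $X$: if $y_n \in X_V$ and $P_V(y_n) \to P_V(x)$ with $x \in X_V$, any subsequential limit $y \in X$ of $\{y_n\}$ satisfies $P_V(y) = P_V(x)$, forcing $y = x$ by the definition of $X_V$; thus $y_n \to x$.

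For parts (ii) and (iii), the key quantitative input is the standard transversality bound
\[
\Prob_V(\|P_V v\| < \eps) \leq C_{k,N}\, \eps^k, \quad v \in \R^N, \, \|v\| = 1, \, \eps > 0,
\]
for the Haar measure on $\Gr(k,N)$. Fix $\alpha$ in the respective admissible range and set $r_n = 2^{-n/\alpha}$. In case (ii), choose $s \in (\udim X, k(1-\alpha))$ (available since $\alpha < 1 - \udim X / k$) and cover $X$ by $N_n \leq C\, 2^{ns/\alpha}$ balls of radius $r_n$, using the definition of upper box-counting dimension. In case (iii), choose $d' \in (\adim X, k)$ and cover $B(x, 2^{-n+2}) \cap X$ by $N_n \leq C\, 2^{nd'(1-\alpha)/\alpha}$ balls of the same radius, using the Assouad property. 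For fixed $x \in X$, consider the bad event
\[
B_n(x) = \bigcup_{i\, :\, 2^{-n-2} \leq \|x-y_{n,i}\| \leq 2^{-n+2}} \bigl\{V : \|P_V(x-y_{n,i})\| < 3 r_n\bigr\},
\]
where $y_{n,i}$ denote the ball centers. Transversality gives probability $\leq C\, 2^{-nk(1-\alpha)/\alpha}$ per constituent, so $\Prob(B_n(x)) \leq C'\, 2^{n(s - k(1-\alpha))/\alpha}$ in (ii) and $\leq C'\, 2^{n(d'-k)(1-\alpha)/\alpha}$ in (iii), both summable in $n$ by the choice of $s$ or $d'$. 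Borel--Cantelli combined with Fubini's theorem (applied to the jointly Borel indicator of $B_n(x)$ in $(V, x)$) yields, for almost every $V$, a full $\mu$-measure Borel set of $x$ lying in only finitely many $B_n(x)$.

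For such a good $V$ and $x \in X_V$ with threshold $n_0 = n_0(V, x)$: for any $y \in X$ with $\|x-y\| \in [2^{-n-1}, 2^{-n}]$ and $n \geq n_0$, pick $y_{n,i}$ with $\|y-y_{n,i}\| \leq r_n$; then $\|x-y_{n,i}\| \in [2^{-n-2}, 2^{-n+2}]$, so $\|P_V(x-y_{n,i})\| \geq 3 r_n$ and consequently
\[
\|P_V(x) - P_V(y)\| \geq 2 r_n \geq 2\, \|x-y\|^{1/\alpha}.
\]
For $\|x-y\| \geq 2^{-n_0}$, compactness of $X$ together with $x \in X_V$ (no other preimage of $P_V(x)$ in $X$) forces $\inf_{y \in X,\, \|x-y\|\geq 2^{-n_0}} \|P_V(x-y)\| > 0$, yielding a positive lower bound for the ratio on this bounded range. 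Intersecting over a countable dense sequence $\alpha_m$ approaching the respective supremum preserves full $\mu$-measure, and pointwise $\alpha'$-H\"older implies pointwise $\alpha$-H\"older for $\alpha < \alpha'$ on the bounded set $P_V(X)$. The main obstacle is the careful measurability check: verifying joint Borel measurability of $B_n(x)$ in $(V, x)$ (needed for Fubini) and ensuring the construction yields a Borel set $X_V^\sharp$ working simultaneously for every admissible $\alpha$.
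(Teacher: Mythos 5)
Your proposal is correct in its essential structure and follows the same route as the paper: a transversality estimate for projections, coverings adapted to the relevant dimension at scale $r_n \approx (2^{-n})^{1/\alpha}$, and a Fubini argument to transfer per-point almost-sure bounds to almost-sure per-point bounds. The differences are largely cosmetic. The paper works with linear maps $L \in \Lin(\R^N,\R^k)$ and reduces to projections via the identification $L = \Psi \circ P_V$ (Remark~\ref{rem:linear to projection}); you work directly with $P_V$ and the Haar measure, which is equivalent. You package the summation of probabilities as Borel--Cantelli, while the paper (Lemma~\ref{lem: eps delta Leb} and the proof of part~\ref{it:udim main}) does the same computation as a direct sum over dyadic scales of $\|\phi_L(x)-\phi_L(y)\|$. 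For part~\ref{it:adim proj}, the paper actually proves a stronger statement (Theorem~\ref{thm:assouad full}), obtaining a pointwise log-Lipschitz lower bound $\|\phi_L(x)-\phi_L(y)\| \gtrsim \|x-y\|/(\log(2R/\|x-y\|))^{\eta/\theta}$ via a two-parameter nested covering at scales $\rho_n/c^i$; your argument gives only the $\alpha$-H\"older estimate for each fixed $\alpha$, intersected over a countable dense set of exponents, which is sufficient for the stated theorem but weaker than what the paper establishes.

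Two small cautions. First, the claim that $X_V = \{x \in X : P_V^{-1}(P_V(x))\cap X = \{x\}\}$ has full $\mu$-measure does not follow from the \emph{statement} of the Probabilistic injective projection theorem (which only asserts injectivity on \emph{some} full-measure Borel set, leaving open that a positive-measure set of points could collide with points of the null complement); it requires the per-point estimate established in its proof, namely that for each fixed $x$ and almost every $V$ there is no $y \in X\setminus\{x\}$ at all with $P_V x = P_V y$ (this is \eqref{eq:almost every injectivity Leb} and \eqref{eq:almost every injectivity} in the paper, used to define exactly this $X_L$). Second, the measurability issue you flag as ``the main obstacle'' is not a genuine obstacle: replacing the $x$-dependent ball covers by a fixed dyadic mesh at scale $r_n$ makes the centers independent of $x$, the Assouad/box bounds control the number of relevant mesh cubes, and $B_n(x)$ becomes manifestly jointly Borel in $(V,x)$.
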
 

Theorem~\ref{thm:injectivity-proj} follows from a more general Theorem~\ref{thm:inverses main}, presented in Subsection~\ref{subsec:lipschitz}. In Section~\ref{sec:examples} we provide examples showing that the result is sharp in several ways. In particular, the bound $1 - \udim X / k$ for the H\"{o}lder exponent in assertion~(ii) cannot be improved in terms of $\udim X$.

\begin{rem}
It is important to note that, in general, we cannot obtain H\"older continuity (instead of pointwise H\"older continuity) of the inverse of $P_V|_{X_V}$ in Theorem~\ref{thm:injectivity-proj}, even under the assumption $\adim X < k$. Indeed, if $X \subset \R^N$ is a compact set with $\adim X < k$, which does not embed topologically into $\R^k$ (for example, there exist simplicial complexes of dimension $n$ which do not embed topologically into $\R^{2n}$, see \cite[Example~V.3]{HW41}) and $\mu$ is any measure with $\supp\mu = X$, then a projection $P_V$ onto a $k$-dimensional linear space $V \subset \R^N$, which is injective on a set of full $\mu$-measure with a H\"older inverse is actually a homeomorphism on $X$, which is impossible, as $X$ does not embed topologically into $\R^k$. See \cite[Remark~3.10]{BGS20} for the details of this argument.
\end{rem}

\subsection{{Measures with almost-surely injective projections in every direction}}

An important question in the geometric measure theory, which has gained increasing interest in recent years, is finding conditions under which suitable projection theorems hold for \textit{every} (rather than almost every) projection. This question is particularly interesting in the context of
\emph{self-similar measures} for iterated function systems, i.e.~Borel probability measures $\mu$ in $\R^N$ satisfying
\[ \mu = \sum \limits_{i\in I} p_i\: \vphi_i \mu,\]
where $I$ is a finite set, $(p_i)_{i \in I}$ is a strictly positive probability vector and $\{ \vphi_i : i \in I \}$ is an iterated function system (IFS) consisting of contracting similarities
\[
\vphi_i \colon \R^N \to \R^N, \qquad \vphi_i(x) = r_i O_i(x) + t_i 
\]
with scales $r_i \in (0,1)$, orthogonal matrices $O_i \in \R^{N \times N}$ and translation vectors $t_i \in \R^N$.

In \cite[Theorem~1.6]{HochmanShmerkinLocalEntropy}, Hochman and Shmerkin proved that if the system $\{\vphi_i : i \in I \}$ satisfies the Strong Separation Condition and the semigroup generated by $\{O_i : i \in I\}$ acts minimally on $\Gr(k,N)$, then the first assertion of Marstrand--Mattila's projection theorem for measures (Theorem~\ref{thm:marstrand}) holds for all orthogonal projections onto $k$-dimensional linear subspaces of $\R^N$. The result was extended by Farkas \cite[Theorem~1.6]{FarkasProjections16} for systems without any separation conditions. On the other hand, Rapaport \cite{Rapaport17} constructed a self-similar measure in the same class, which does not satisfy the absolute continuity part of Theorem~\ref{thm:marstrand} for a Baire-residual set of projections. Similarly, he showed that for such measures, the Slicing theorem (Theorem~\ref{thm:sections}) can fail for a residual set of projections. Nevertheless, in \cite{Rapaport20} he proved  that typical \emph{homogeneous} self-similar measures (i.e.~the ones satisfying $r_i O_i = r_j O_j$ for all $i, j$) in the plane with the Strong Separation Condition satisfy both assertions of Theorem~\ref{thm:marstrand} for every projection. In the case of random constructions, Simon and Rams \cite{RamsSimonPercolations14,RamsSimonPercolations15} showed that almost every fractal percolation satisfies the assertions of Theorem~\ref{thm:marstrand-sets} for every projection.

Considering the question of the injectivity of projections, we note that, obviously, a non-singleton set in $\R^N$ cannot be projected injectively onto all $k$-dimensional linear subspaces of $\R^N$, for any $1 \leq k \le N-1$.
This naturally leads us to the setup of almost-surely injective projections.
The first question appearing in this context is whether there exist non-trivial (with non-singleton support) measures which project almost-surely injectively in every direction. The following result shows that in fact such measures exist.

\begin{thm}[{\bf Existence of measure with almost-surely injective projections in every direction}{}]
\label{thm:every direction a.s. injectivity}
There exists a compactly supported Borel probability measure $\mu$ in $\R^2$ with positive Hausdorff dimension $($and hence with non-singleton support$)$, such that every orthogonal projection $P_V\colon \R^2 \to V$ onto a line $V \subset \R^2$ is injective on a Borel set $X_V$ of full $\mu$-measure.
\end{thm}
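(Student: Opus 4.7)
The plan is to realise $\mu$ as the pushforward of a Cantor-type measure on $[0,1]$ under a graph map into $\R^{2}$. Concretely, I would fix a compact Cantor set $K\subset[0,1]$ carrying a non-atomic Borel probability measure $\mu_{K}$ of positive Hausdorff dimension (e.g.\ a standard self-similar Cantor set with its natural measure), together with a continuous injective function $\phi\colon K\to\R$, and set $\Phi(t)=(t,\phi(t))$ and $\mu:=\Phi_{*}\mu_{K}$. Then $\Phi$ is a topological embedding, so $\supp\mu=\Phi(K)$ is a non-singleton Cantor set in $\R^{2}$ with $\hdim\mu=\hdim\mu_{K}>0$.

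For $v_{\theta}=(\cos\theta,\sin\theta)$, the orthogonal projection of $\Phi(t)$ onto the line in direction $v_{\theta}$ is essentially the scalar $h_{\theta}(t):=t\cos\theta+\phi(t)\sin\theta$. Since $\Phi$ is injective, almost-sure injectivity of $P_{V}$ on $\Phi(K)$ is equivalent to $\mu_{K}(E_{\theta})=0$, where $E_{\theta}:=\{t\in K:\exists\,t'\in K\setminus\{t\},\ h_{\theta}(t)=h_{\theta}(t')\}$. Thus the task reduces to producing $(K,\mu_{K},\phi)$ such that $\mu_{K}(E_{\theta})=0$ for every $\theta\in[0,\pi)$. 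Parametrising $s=-\cot\theta$ for $\theta\in(0,\pi)$, this is the same as requiring that for every $s\in\R$ the tilted function $\psi_{s}(t):=\phi(t)-st$ be essentially injective on $K$; the two boundary cases $\theta=0$ and $\theta=\pi/2$ are handled by the injectivity of $\mathrm{id}$ on $K$ and of $\phi$ on $K$, respectively.

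To construct such $(K,\mu_{K},\phi)$, I would resort to a generic or randomised construction: keep $K$ fixed as a self-similar Cantor set, and choose $\phi$ as a suitably random strictly increasing continuous perturbation of the identity (e.g.\ the identity plus a small Hölder series $\sum_{n}\xi_{n}\psi_{n}$ with independent centred random coefficients and a wavelet-type basis adapted to the combinatorial structure of $K$). A Fubini argument should then show that, for each fixed $s\in\R$, the expected $\mu_{K}$-measure of the $s$-bad set vanishes, so almost surely $\psi_{s}$ is essentially injective on $K$. As a deterministic alternative, one could attempt an explicit construction exploiting sufficiently irrational or transcendental parameters so as to rule out arithmetic coincidences among secant slopes of the graph $\Phi(K)$.

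The main obstacle is to upgrade these almost-sure statements from \emph{each fixed $s$} to \emph{every $s\in\R$ simultaneously}, since the family of directions is uncountable and a naive union bound fails. I expect this step to be handled by a uniform quantitative estimate on the measure of the $s$-bad set (uniform in $s$ on compact subsets of $\R$), combined with the joint continuity of $(s,t)\mapsto\psi_{s}(t)$ and an upper-semicontinuity/covering argument that passes from a countable dense set of slopes to all of $\R$. If this uniform control turns out to be delicate, a workable alternative is to replace the randomisation by a direct Baire category argument inside a suitable Polish space of candidate functions $\phi$, showing that the set of $\phi$ violating essential injectivity for some slope is meagre, so that a good $\phi$ exists.
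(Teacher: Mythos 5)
Your reduction is correct: pushing forward a measure $\mu_K$ on a Cantor set $K\subset[0,1]$ by $\Phi(t)=(t,\phi(t))$, one sees that $P_V$ fails almost-sure injectivity exactly when the one-variable map $h_\theta(t)=t\cos\theta+\phi(t)\sin\theta$ fails to be essentially injective on $K$, and you correctly identify that the entire difficulty is the uncountable family of slopes. But this is precisely where your proposal stops. The Fubini/randomisation scheme gives, for a random $\phi$, ``for each fixed $s$, a.s.\ $\psi_s$ is essentially injective,'' hence by Fubini again ``a.s., for a.e.\ $s$,'' and not ``a.s., for every $s$.'' You acknowledge this but defer it to an unspecified ``uniform quantitative estimate combined with upper-semicontinuity/covering,'' and no such argument is supplied; in fact continuity of $(s,t)\mapsto\psi_s(t)$ gives you nothing here because the bad set $E_\theta$ need not vary upper-semicontinuously in $\theta$, and a dense-slope argument does not propagate nullity to all $\theta$. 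The Baire-category variant has the same quantifier problem. As written, the proposal is a program rather than a proof, with the central step missing.

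The paper's route is genuinely different and sidesteps the quantifier issue entirely by a deterministic choice of graph function. Taking $\phi(t)=t^2$, the collision condition $h_\theta(t)=h_\theta(t')$ for $t\ne t'$ linearises: it becomes $t+t'=z$ where $z=-\cot\theta$. Thus, for each direction, the bad pairs are exactly pairs summing to a fixed real $z$, and ``for every direction'' becomes ``for every $z\in[0,1)$,'' which is still uncountable but now has arithmetic structure one can exploit explicitly. The Cantor set $K$ and measure $\mu_K$ are then engineered so that the equation $t+t'=z$ is combinatorially rigid: $K=\Pi(\Sigma)$ where $\Sigma\subset\{0,1\}^\N$ forces every odd-indexed binary digit to be $0$, and $\mu_K$ is a Bernoulli measure with digit probability $p<1/2$ on the even positions. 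Because odd digits of $t$ and $t'$ vanish, the carry in $t+t'$ is blocked, and the dyadic expansion of $z$ in each block $\{2n-1,2n\}$ determines (or constrains) the digits $t_{2n},t'_{2n}$: block $(0,0)$ forces both to be $0$, block $(1,0)$ forces both to be $1$, and block $(0,1)$ forces exactly one to be $1$. If the first two block types occur infinitely often, the bad set is a cylinder-type set of $\PP$-measure zero; if eventually all blocks are $(0,1)$, the constraint forces a digit frequency $\ge 1/2$, which the Birkhoff ergodic theorem rules out for $p<1/2$. This gives, \emph{for every} $z$, an explicit Borel null set carrying all collisions. If you want to salvage your approach you would need to find an analogous structural mechanism; absent that, I would suggest adopting the quadratic-graph reduction, which is the real content of the theorem.
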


The proof of Theorem~\ref{thm:every direction a.s. injectivity} is presented in Section~\ref{sec:every_direction}. 

Furthermore, we show that, unlike for the Marstrand--Mattila and Slicing theorems, non-dege\-ne\-rated homogeneous self-similar measures cannot satisfy this property, even generically.

\begin{prop}\label{prop:self-sim}
Let $\mu$ be a self-similar measure in $\R^N$ corresponding to a homogeneous IFS $\vphi_i(x) = rO(x) + t_i,\ i \in I$, such that $\vphi_i$ are not all equal. Then for every $k \in \{1,\ldots, N-1\}$ there exists a $k$-dimensional linear space $V \subset \R^N$ such that for every Borel set $Y \subset \R^N$ of full $\mu$-measure, the orthogonal projection $P_V$ onto $V$ is not injective on $Y$.
\end{prop}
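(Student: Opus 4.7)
The plan is to use homogeneity to manufacture, inside an arbitrary Borel set of full $\mu$-measure, a pair of distinct points that are collapsed by a suitably chosen projection $P_V$. The argument is cleanest in the symbolic model.

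Since the $\varphi_i$ are not all equal and share the common linear part $rO$, there exist $i_0, j_0 \in I$ with $v := t_{i_0} - t_{j_0} \neq 0$. For the given $k \in \{1,\ldots, N-1\}$ one has $\dim V^\perp = N-k \geq 1$, so a $k$-dimensional subspace $V \subset \R^N$ with $v \in V^\perp$ can be chosen; this $V$ will be the required exceptional subspace.

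Next, I would introduce the Bernoulli measure $\nu = (p_i)_{i\in I}^{\otimes \N}$ on $I^\N$ and the coding map $\pi \colon I^\N \to \R^N$, so that $\mu = \pi_*\nu$. The single place where homogeneity enters is the identity
\[
\pi(i_0\omega) - \pi(j_0\omega) = \bigl(rO\pi(\omega)+t_{i_0}\bigr) - \bigl(rO\pi(\omega)+t_{j_0}\bigr) = v
\]
for every $\omega \in I^\N$, whence $P_V(\pi(i_0\omega)) = P_V(\pi(j_0\omega))$ while the two points themselves differ by $v\neq 0$.

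The final step is a short Bernoulli accounting. Given any Borel $Y \subset \R^N$ with $\mu(Y)=1$, set $Y^* := \pi^{-1}(Y)$ and, for each $i \in I$, $Z_i := \{\omega \in I^\N : i\omega \in Y^*\}$. The product structure of $\nu$ yields $\nu(Y^* \cap [i]) = p_i \, \nu(Z_i)$; summing over $i$ gives $\sum_i p_i \nu(Z_i) = \nu(Y^*) = 1$, which, together with $\nu(Z_i)\leq 1$ and $p_i>0$, forces $\nu(Z_i)=1$ for every $i \in I$. In particular $\nu(Z_{i_0}\cap Z_{j_0})=1$, so picking any $\omega$ in this set produces distinct points $\pi(i_0\omega), \pi(j_0\omega) \in Y$ with identical $P_V$-image, contradicting injectivity of $P_V|_Y$. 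There is no substantial obstacle here; the entire argument rests on the $\omega$-independence of $\pi(i_0\omega)-\pi(j_0\omega)$, which is precisely what fails in the non-homogeneous case and explains why the proposition is specific to homogeneous IFSs.
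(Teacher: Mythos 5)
Your proof is correct, but it takes a genuinely different and more elementary route than the paper. The paper establishes a slightly more general statement (Proposition~\ref{prop:ifs with translate}, requiring only that two of the maps differ by a translation, not full homogeneity), and its proof works entirely on the $\R^N$ side via Rokhlin disintegration: it shows that the conditional measures of $\nu = p_1 \varphi_1 \mu + p_2 \varphi_2 \mu$ with respect to $P_V$ are translated copies of each other (by $t$), while almost-sure injectivity would force them to be Dirac masses and hence equal, a contradiction. Your argument instead pulls everything back to the symbolic space via the coding map $\pi$ and uses only the product structure of the Bernoulli measure, replacing the conditional-measure machinery by the elementary observation that $\sum_i p_i \nu(Z_i) = 1$ with $p_i > 0$ forces each $\nu(Z_i) = 1$. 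This is a cleaner and more self-contained argument, at the cost of invoking $\mu = \pi_*\nu$. One small remark: your closing sentence slightly overstates the role of homogeneity. The identity $\pi(i_0\omega) - \pi(j_0\omega) = v$ requires only that $\varphi_{i_0}$ and $\varphi_{j_0}$ share the same linear part (equivalently, differ by a translation), which is exactly the hypothesis of the paper's more general Proposition~\ref{prop:ifs with translate}; so your argument actually covers that generality too, and homogeneity of the whole system is sufficient but not the precise obstruction.
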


Proposition~\ref{prop:self-sim} follows from a more general fact (Proposition~\ref{prop:ifs with translate}), which is proved in Section~\ref{sec:every_direction}.

\subsection{Regularity of the inverse for typical linear perturbations of Lipschitz maps}\label{subsec:lipschitz}

The result described in Theorem~\ref{thm:injectivity-proj} can be generalized to the setup of almost-surely injective linear perturbations of Lipschitz maps on compact sets in Euclidean spaces. In a previous paper, we proved the following.

\begin{thm}[{\cite[Theorem~3.1]{BGS20}}]\label{thm: hdim injectivity}
Let $\mu$ be a finite or $\sigma$-finite Borel measure in $\R^N$ supported on a set $X$, such that $\mu$ is singular with respect to the $k$-dimensional Hausdorff measure for some $k \in \N$ $($in particular, it is enough to assume $\hdim \mu < k)$, and let $\phi \colon X \to \R^k$ be a Lipschitz map. Then for almost every linear transformation $L\colon \R^N \to \R^k$ there exists a Borel set $X_L \subset X$ of full $\mu$-measure, such that the map $\phi_L = \phi + L$ is injective on $X_L$.
\end{thm}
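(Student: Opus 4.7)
The strategy is to combine a Fubini-type swap in the variables $(x,L)$ with a geometric estimate that translates the hypothesis $\mu \perp \Hk^k$ into Lebesgue nullity of a certain union of affine subspaces in $\Lin(\R^N,\R^k) \cong \R^{Nk}$. Using $\sigma$-finiteness and the singularity $\mu \perp \Hk^k$, I would first reduce to the case where $\mu$ is a Borel probability measure concentrated on a Borel set $X$ with $\Hk^k(X)=0$. The goal then becomes to show that for Lebesgue-a.e.\ $L$ the analytic set
\[
A_L \;=\; \{x\in X : \exists\, y \in X\setminus\{x\},\ \phi_L(x)=\phi_L(y)\}
\]
has $\mu(A_L)=0$; a standard outer-regularity step then produces a Borel full-$\mu$-measure subset $X_L \subset X\setminus A_L$ on which $\phi_L$ is injective. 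Applying Fubini--Tonelli on each bounded Borel $U\subset\Lin(\R^N,\R^k)$ yields
\[
\int_U \mu(A_L)\, dL \;=\; \int_X \Lk^{Nk}\bigl(\{L\in U: x\in A_L\}\bigr)\, d\mu(x),
\]
so the theorem reduces to showing that, for every fixed $x\in X$, the set
\[
B_x \;=\; \bigl\{L\in\Lin(\R^N,\R^k) : \exists\, y\in X\setminus\{x\},\ L(y-x)=\phi(x)-\phi(y)\bigr\}
\]
satisfies $\Lk^{Nk}(B_x)=0$.

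For the geometric estimate of $B_x$, the map $g_x\colon X\setminus\{x\}\to(\R^N\setminus\{0\})\times\R^k$ defined by $g_x(y)=(y-x,\,\phi(x)-\phi(y))$ is Lipschitz with constant at most $1+\Lip(\phi)$, so $S_x:=g_x(X\setminus\{x\})$ inherits $\Hk^k(S_x)=0$. One rewrites
\[
B_x \;=\; \bigcup_{(v,w)\in S_x}\{L\in\Lin(\R^N,\R^k):Lv=w\},
\]
the indexed union of codimension-$k$ affine subspaces. I would parameterize this union as the image, under a locally Lipschitz map $\Psi\colon S_x\times\R^{k(N-1)}\to\Lin(\R^N,\R^k)$, obtained by solving for one column of $L$ from $Lv=w$ in terms of the remaining $N-1$ columns. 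After a countable partition of $S_x$ according to which coordinate of $v$ dominates and to which dyadic shell $\{2^{-n}\le \|v\|<2^{-n+1}\}$ the point belongs, together with a partition of $\R^{k(N-1)}$ into bounded boxes, $\Psi$ becomes genuinely Lipschitz on each piece with a uniform constant. Combining the product inequality $\Hk^{s+t}(A\times B)\le c\,\Hk^s(A)\,\Hk^t_\infty(B)$ with $\Hk^k(S_x)=0$ gives $\Hk^{Nk}(S_x\times B')=0$ for every bounded $B'\subset\R^{k(N-1)}$; the Lipschitz-image inequality then yields $\Hk^{Nk}(B_x)=0$, hence $\Lk^{Nk}(B_x)=0$, as desired.

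The main obstacle is precisely this geometric estimate: the picture of $B_x$ as a union of codimension-$k$ affine subspaces indexed by an $\Hk^k$-null set is conceptually clean, but making it rigorous requires the dyadic/coordinate-based partitioning above to control the parameterization $\Psi$ near the degenerate locus $\|y-x\|\to 0$ and along the unbounded fibers $\R^{k(N-1)}$. A secondary technical point arises in the Fubini step, where the set $\{(x,L):x\in A_L\}$ is analytic rather than Borel: the integral exchange must be justified via universal measurability of analytic sets, and passing from $\mu$-nullity of the analytic $A_L$ to a Borel full-$\mu$-measure $X_L$ uses that every analytic set has a Borel $\mu$-null superset. Notably, no dimension assumption strictly stronger than $\mu\perp\Hk^k$ enters anywhere in the argument, which is precisely where the singularity hypothesis improves on the more restrictive $\hdim\mu<k$.
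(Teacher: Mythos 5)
Your proposal is correct, and it shares the paper's overall Fubini reduction---fix $x\in X$ and show $\Leb(B_x)=0$---but it establishes that nullity by a genuinely different device. The paper's proof (reproduced here inside the proof of Theorem~\ref{thm:continuity extended}) covers $K_n=\{y\in X:\|y-x\|\ge 1/n\}$ by balls $B(y_i,\eps_i)$ with $\sum\eps_i^k$ arbitrarily small (this is exactly where $\mH^k(X)=0$ enters), replaces the exact coincidence $\phi_L(x)=\phi_L(y)$ by the thickened condition $\|\phi_L(x)-\phi_L(y_i)\|\le M\eps_i$ at the ball centers, and then sums the transversality estimate $\Leb\{L\in E:\|Lv+z\|\le\eps\}\le C(\eps/\|v\|)^k$ of Lemma~\ref{lem: key_ineq_linear}; letting $n\to\infty$ handles the part near $x$. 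You instead exhibit $B_x$ directly as a countable union of Lipschitz images of pieces of $S_x\times\R^{k(N-1)}$, where $S_x=g_x(X\setminus\{x\})$ inherits $\mH^k(S_x)=0$, and invoke the product bound and Lipschitz invariance of $\mH^{Nk}$. Both routes use $\mH^k(X)=0$ in the same spot, so both indeed deliver the full singularity hypothesis and not merely $\hdim\mu<k$. The trade-off is clear: the paper's $\eps$-thickening plus quantitative transversality estimate avoids your column-choice and dyadic-shell bookkeeping near $\|v\|=0$ (and, if $X$ is unbounded, you in fact need shells $\{2^{n}\le\|v\|<2^{n+1}\}$ for \emph{all} $n\in\Z$, not only $n\ge 1$) together with the partition of the $\R^{k(N-1)}$ fibers into boxes; in exchange your parameterization is self-contained, needing no external transversality lemma, and shows the nullity of $B_x$ in one geometric stroke. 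Your handling of measurability via analytic sets and universal measurability is a valid substitute for the paper's appeal to Borel measurability of the exceptional sets.
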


Here and in the sequel, `almost every linear map' refers to the Lebesgue measure in the space $\Lin(\R^N, \R^k) \simeq \R^{Nk}$.
The result can be generalized to the case of H\"older maps $\phi\colon X \to \R^k$, see \cite{BGS20} for details.
The conclusion of Theorem~\ref{thm: hdim injectivity} holds also for prevalent sets in the spaces of Lipschitz and $C^r$-maps $\phi\colon X \to \R^k$ (see Definition~\ref{def:prevalence} and Remark~\ref{rem:prevalence lipschitz}).

Extending the setup of Subsection~\ref{subsec:inj_proj}, we study regularity properties of the inverse maps 
\[
(\phi_L|_{X_L})^{-1} \colon \phi_L(X_L) \to X_L.
\]

The following is our main result.

\begin{thm}[{\bf Regularity of the inverse for almost-surely injective linear perturbations of Lipschitz maps}{}]
\label{thm:inverses main}
	Let $\mu$ be a finite Borel measure in $\R^N$ with a compact support $X$ and let $\phi \colon X \to \R^k$ be a Lipschitz map. We write $\phi_L = \phi  + L$ for linear maps $L \in \Lin(\R^N, \R^k)$. Then the following hold.
	\begin{enumerate}[$($i$)$]
		\item\label{it:hdim main} If $\hdim X < k$, then for almost every linear map $L$ there exists a Borel set $X_L \subset X$ of full $\mu$-measure such that $\phi_L$ is injective on $X_L$ with continuous inverse. 

		\item\label{it:udim main} If $\udim X <k$, then for almost every linear map $L$ there exists a Borel set $X_L \subset X$ of full $\mu$-measure such that $\phi_L$ is injective on $X_L$ with inverse which is pointwise $\alpha$-H\"{o}lder for every $\alpha \in \big(0,1 -  \frac{\udim X}{k}\big)$.

		\item\label{it:adim main} If $\adim X<k$, then for almost every linear map $L$ there exists a Borel set $X_L \subset X$ of full $\mu$-measure on which $\phi_L$ is injective with inverse which is pointwise $\alpha$-H\"{o}lder for every $\alpha \in (0,1)$.
	\end{enumerate}
\end{thm}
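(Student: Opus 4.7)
I plan to unify the three parts under a single scheme based on a transversality estimate together with a Fubini--Borel--Cantelli argument, extending the proof of Theorem~\ref{thm: hdim injectivity} and the classical Hunt--Kaloshin framework. First, I would fix a bounded open ball $\Lambda\subset\Lin(\R^N,\R^k)$; the conclusion for almost every $L\in\Lin(\R^N,\R^k)$ then follows by exhausting via an increasing sequence of such balls. The cornerstone is the observation that for $x\neq y$ in $\R^N$ the map $L\mapsto\phi_L(x)-\phi_L(y)=\phi(x)-\phi(y)+L(x-y)$ is affine with surjective derivative $L\mapsto L(x-y)$, yielding the transversality estimate
\[
\Leb\bigl(\{L\in\Lambda:\|\phi_L(x)-\phi_L(y)\|<\delta\}\bigr)\;\le\; C_\Lambda\,\frac{\delta^k}{\|x-y\|^k}
\]
for every $\delta>0$. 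For $x\in X$ and $n\in\N$, introduce the dyadic shell $S_n(x)=\{y\in X:2^{-n-1}\le\|x-y\|<2^{-n}\}$ and the bad Lebesgue measure $Q_n(x,\delta)=\Leb(\{L\in\Lambda:\exists\,y\in S_n(x),\ \|\phi_L(x)-\phi_L(y)\|<\delta\})$; covering $S_n(x)$ by $\delta$-balls centred in $X$, absorbing the Lipschitz oscillation of $\phi_L$ on each ball into the threshold, and summing transversality over the cover gives $Q_n(x,\delta)\lesssim N(S_n(x),\delta)\cdot 2^{nk}\delta^k$.

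The three parts diverge only in which covering bound is applied to $S_n(x)$. In part~\eqref{it:adim main}, the Assouad bound supplies $N(S_n(x),\delta)\le C(2^{-n}/\delta)^d$ for any $d>\adim X$ and $\delta\le 2^{-n}$; with $\delta=2^{-ns}$ this yields $Q_n(x,2^{-ns})\lesssim 2^{-n(k-d)(s-1)}$, summable in $n$ for every $s>1$ and therefore every $\alpha=1/s\in(0,1)$. In part~\eqref{it:udim main}, I would use only the coarser global estimate $N(S_n(x),\delta)\le N(X,\delta)\le C\delta^{-d}$ for $d>\udim X$; this gives $Q_n(x,2^{-ns})\lesssim 2^{n(k-s(k-d))}$, summable precisely when $s>k/(k-d)$, i.e.~when $\alpha<1-d/k$, and letting $d\downarrow\udim X$ along a sequence recovers the whole range $\alpha\in(0,1-\udim X/k)$. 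For part~\eqref{it:hdim main}, where only $\hdim X<k$ is assumed, I would replace finite $\delta$-covers by countable covers with arbitrarily small $\mH^s$-sums for some $s\in(\hdim X,k)$; passing $\delta\downarrow 0$ in the estimate above then forces $Q_n(x,0^+)=0$ for every $x\in X$ and $n\in\N$, serving as the substitute for summability in this weakest case.

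Integrating the uniform per-scale bound against $\mu$ and invoking Fubini yields $\int_\Lambda\mu(B_n^L)\,dL\le C\mu(X)\cdot 2^{-n\beta}$, where $B_n^L=\{x\in X:\exists\,y\in S_n(x),\ \|\phi_L(x)-\phi_L(y)\|<2^{-ns}\}$ and $\beta>0$; summability in $n$ combined with the Borel--Cantelli lemma for the finite measure $\mu$ gives, for almost every $L\in\Lambda$, a Borel set of full $\mu$-measure on which the local estimate $\|\phi_L(x)-\phi_L(y)\|\ge\|x-y\|^s$ holds whenever $\|x-y\|$ is sufficiently small. The same Fubini step applied to $Q_n(x,0^+)=0$ yields $\mu(D_L)=0$ for almost every $L\in\Lambda$, where $D_L=\{x\in X:\phi_L^{-1}(\phi_L(x))\cap X\neq\{x\}\}$; on $X_L=X\setminus D_L$ the map $\phi_L$ is injective with automatically continuous inverse, since any sequence $y_n\in X_L$ with $\phi_L(y_n)\to\phi_L(x)$ has, by compactness of $X$, a subsequence converging to some $y^*\in X$ with $\phi_L(y^*)=\phi_L(x)$, forcing $y^*=x$ by definition of $X_L$. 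I expect the main obstacle to be passing from the \emph{local} H\"older inequality in parts~\eqref{it:udim main}/\eqref{it:adim main} to a \emph{pointwise} one valid for every $y\in X_L$, not just $y$ close to $x$; the plan here is to intersect with the part~\eqref{it:hdim main} set $X\setminus D_L$, so that for every $x$ in the resulting full-measure set and every $y\in X\setminus\{x\}$ one has $\phi_L(x)\neq\phi_L(y)$, whence compactness of the closed complement $\{y\in X:\|x-y\|\ge 2^{-n_0(x,L)}\}$ supplies a uniform positive lower bound on $\|\phi_L(x)-\phi_L(y)\|$ away from $x$ that can be absorbed into the constant $c_x$. A final countable intersection over a sequence of admissible exponents $\alpha$ then produces a single full $\mu$-measure Borel set $X_L$ on which the inverse of $\phi_L|_{X_L}$ is pointwise $\alpha$-H\"older for every $\alpha$ in the range specified by the relevant part of the theorem.
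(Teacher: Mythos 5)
Your proposal is correct and rests on the same single key estimate as the paper --- the transversality bound $\Leb(\{L:\|\phi_L(x)-\phi_L(y)\|<\delta\})\lesssim \delta^k/\|x-y\|^k$ (Lemma~\ref{lem: key_ineq_linear}, which the paper imports from Robinson's book rather than re-deriving from the surjectivity of $L\mapsto L(x-y)$) --- but you organize the three parts differently. You unify them around dyadic shells $S_n(x)$ in the variable $\|x-y\|$ and an explicit Fubini--Borel--Cantelli argument, whereas the paper tailors the decomposition to each case: in part~\eqref{it:hdim main} it covers $K_n=\{y:\|x-y\|\ge 1/n\}$ by balls with small $\sum\eps_i^k$ (directly exploiting $\mH^k(X)=0$, which is cleaner than your detour through $\mH^s$-sums for $s<k$ followed by an $\eps_i^{k-s}$ absorption); in part~\eqref{it:udim main} it proves a per-$(\eps,\delta)$ estimate (Lemma~\ref{lem: eps delta Leb}) and then decomposes over dyadic scales of $\|\phi_L(x)-\phi_L(y)\|$ rather than of $\|x-y\|$, computing the geometric series directly without invoking Borel--Cantelli; and in part~\eqref{it:adim main} it uses the same shells $Z_n$ you use but refines the cover at a secondary scale $\rho_n/c^i$, which yields the stronger log-Lipschitz bound of Theorem~\ref{thm:assouad full} (your argument recovers only the pointwise $\alpha$-H\"older statement for each $\alpha<1$, which suffices for the theorem as stated). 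Your handling of the local-to-global passage --- intersecting with the injectivity set from part~\eqref{it:hdim main} and using compactness of $\{y\in X:\|x-y\|\ge 2^{-n_0}\}$ to absorb the far-field into the constant $c_x$ --- is exactly the mechanism the paper uses implicitly via its inclusion of the $\phi_L(x)=\phi_L(y)$ event as a separate term in the decomposition. One small point worth flagging: your shell-covering step requires $\delta=2^{-ns}\lesssim 2^{-n}$ so that the covering centers $y_i$ still satisfy $\|x-y_i\|\gtrsim 2^{-n}$; since $s>1$ this holds for all $n$ past a threshold depending on $s$, which is harmless for Borel--Cantelli but should be stated.
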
 

Instead of linear maps from $\Lin(\R^N, \R^k) \simeq \R^{Nk}$, $k \le N$, with the Lebesgue measure, one can equivalently consider orthogonal projections $P_V\colon \R^N \to V$ for $V \in \Gr(k, N)$ with the rotation-invariant measure (see Remark~\ref{rem:linear to projection}). Therefore, Theorem~\ref{thm:inverses main} implies immediately Theorem~\ref{thm:injectivity-proj} by setting $\phi=0$.  On the other hand, considering $\phi$ within the spaces of Lipschitz or $C^r$-maps, $r = 1, 2, \ldots, \infty$, one obtains a result for almost every map in these spaces in the sense of prevalence, as defined in \cite{Prevalence92}. See Definition~\ref{def:prevalence} and Remark~\ref{rem:prevalence lipschitz} for more details. Examples presented in Section~\ref{sec:examples} show that Theorem~\ref{thm:inverses main} cannot be improved in the setting of prevalence within the above spaces.

\subsection{Relation to the theory of compressed sensing}

The field of \textit{compressed sensing} grew out of the work of Candès, Donoho, Romberg and Tao (\cite{candes2006compressive,candes2006stable,donoho2006compressed,FR13}).
The fundamental problem of the theory is to find conditions enabling to recover an input vector $x\in \R^{N}$ from its linear measurement $y=A x\in \R^{m}$, where $A\in\R^{m\times N}$, even though $m\ll N$.  A key theorem (\cite[Theorem~9.12]{FR13}, see also \cite{candes2006near, CandesRombergTao}) states that
with high probability one may recover $x$ with $\|x\|_0:=|\{j:\,x_j\neq 0\}|\leq s$ (the \textit{$s$-sparsity} condition) from $y$ when $A\in\R^{m\times N}$ is a random Gaussian matrix with $m\approx s\ln\frac{N}{s}$ via an $\ell_1$-minimization \textit{basis pursuit} algorithm \cite[$\S$1.4.3]{mallat1999wavelet}, see also \cite{chen1995atomic}.
Capitalizing on sparsity, compressed sensing has found many applications (see e.g. \cite{lustig2007sparse,duarte2008single,baraniuk2007compressive,herman2009high}).

In \cite{LosslessAnalogCompression}, Alberti, B\"{o}lcskei, De Lellis, Koliander and Riegler studied the  above-mentioned problem in a setting where both the input vector $x$ and the sensing matrix $A$ are random:  $x\in \R^{N}$ is given according to a probability measure $\mu$ in $\R^n$, $A\in\R^{m\times N}$ is given according to the Lebesgue measure in $\R^{m\times N}$ and one seeks to recover $x$ from $y=Ax$ $\mu$-almost surely.  In particular, they proved a version of Theorem~\ref{thm: hdim injectivity} for probability measures, with $\phi = 0$ and the Hausdorff dimension replaced with the lower modified Minkowski dimension, see \cite[Theorem~II.1]{LosslessAnalogCompression} (for related earlier results see \cite[Theorem~18 and Corollary~1]{WV10}).

In recent years there has been a surge of interest in a compressed sensing framework for analog signals modelled by continuous-alphabet discrete-time stochastic processes\footnote{The rigorous passage between continuous-time signals and discrete-time signals is justified by the Shannon sampling theorem (\cite[Chapter~1]{higgins1996sampling}).} with general (not necessarily sparse) distributions (\cite{WV10,donoho2010precise,donoho2011noise,jalali2017universal,CompressionCS17, geiger2019information,GSNewBounds,mmdimcompress}). In \cite{WV10}, Wu and Verd\'u emphasized that the regularity of both of the encoder and decoder is crucial, as it introduces resilience to noise. 
Translated to the setting of \cite{LosslessAnalogCompression} and Theorem~\ref{thm: hdim injectivity}, as the encoder is already assumed to be linear, this corresponds to investigating the regularity of the inverse map $L^{-1}\colon L(X_L)\rightarrow X_L$. Thus, one may interpret our main result as giving almost sure regularity guarantees for decompression under various dimension assumptions on the measure generating the input vector. For example, Theorem~\ref{thm:inverses main} allows us to improve \cite[Corollary~1]{WV10}. According to this result,
denoting by $\Sigma_s$  the set of all $s$-sparse vectors in $\R^N$, i.e.
\[ \Sigma_s = \{ x  \in \R^N :  x = (x_1, \ldots, x_N) \text{ with at most } s \text{ of } x_i \text{ being non-zero} \}, \]
\noindent
if $\mu$ is a $\sigma$-finite Borel measure on $\Sigma_s$, $k > s$, $\eps>0$ and $0<\alpha<1-\frac{s}{k}$, then there exists a linear map $L \colon \R^N \to \R^k$ and  an $\alpha$-H\"older map $g \colon \R^k \to \R^N$ such that $\mu \left( \left\{ x \in \R^N : g(Lx) \neq x \right\} \right) \leq \eps$.

As $\Sigma_s$ is a finite union of $s$-linear subspaces of $\R^N$, one has $\adim \Sigma_s =s$ and thus Theorem~\ref{thm:inverses main}(iii) implies in a straightforward manner the following result, where the H\"older exponent $\alpha$ can be taken arbitrarily close to $1$.

\begin{cor}

Let $\mu$ be any compactly supported finite Borel measure on $\Sigma_s$ and fix $k > s$. Then almost every linear map $L \colon  \R^N \to \R^k$ is injective on a set of full $\mu$-measure with an inverse which is pointwise $\alpha$-H\"older for every $\alpha \in (0,1)$. Consequently, for almost every linear map $L \colon  \R^N \to \R^k$, every $\eps > 0$ and every $\alpha \in (0,1)$, there exists an $\alpha$-H\"older map $g \colon  \R^k \to \R^N$ such that $\mu \left( \left\{ x \in \R^N : g(Lx) \neq x \right\} \right) \leq \eps$.
\end{cor}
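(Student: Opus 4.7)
The corollary is stated as an immediate consequence of Theorem~\ref{thm:inverses main}(iii), so the plan has two parts: verify that the Assouad-dimension hypothesis of that theorem is met by $\supp\mu\subset\Sigma_s$, and then upgrade the resulting pointwise $\alpha$-H\"older inverse to a globally defined $\alpha$-H\"older map $g\colon\R^k\to\R^N$ on a set of $\mu$-measure at least $\mu(\R^N)-\eps$.

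For the first part, I write $\Sigma_s=\bigcup_I V_I$, where $I$ ranges over the $s$-element subsets of $\{1,\dots,N\}$ and $V_I$ is the coordinate $s$-plane indexed by $I$. Each $V_I$ is isometric to $\R^s$, so $\adim V_I=s$, and the finite stability of Assouad dimension gives $\adim\Sigma_s=s$. Since $X:=\supp\mu$ is a compact subset of $\Sigma_s$, monotonicity yields $\adim X\le s<k$. Applying Theorem~\ref{thm:inverses main}(iii) with $\phi\equiv 0$ (so that $\phi_L=L$) produces, for almost every $L\in\Lin(\R^N,\R^k)$, a full $\mu$-measure Borel set $X_L\subset X$ on which $L$ is injective and the inverse $\psi:=(L|_{X_L})^{-1}$ is pointwise $\alpha$-H\"older for every $\alpha\in(0,1)$. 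This is the first assertion of the corollary.

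For the ``consequently'' part, fix such an $L$ together with $\eps>0$ and $\alpha\in(0,1)$. I would define
\[
c(x):=\sup_{x'\in X_L\setminus\{x\}}\frac{\|x-x'\|}{\|L(x)-L(x')\|^{\alpha}},\qquad x\in X_L,
\]
which is finite by the pointwise $\alpha$-H\"olderness of $\psi$ at $L(x)$ (injectivity of $L|_{X_L}$ keeps the denominator positive). Reducing the supremum to a countable one over a dense subset of the separable set $X_L$ shows that $c$ is Borel measurable, so the Borel sets $E_n:=\{x\in X_L:c(x)\le n\}$ increase to $X_L$, and I may choose $n$ with $\mu(X_L\setminus E_n)\le\eps$. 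On $L(E_n)$ the map $\psi$ is then \emph{uniformly} $\alpha$-H\"older with constant $n$, and the classical McShane--Whitney construction applied componentwise (for each coordinate $j$, $\tilde\psi_j(y)=\inf_{y'\in L(E_n)}(\psi_j(y')+n\|y-y'\|^{\alpha})$) extends it to an $\alpha$-H\"older map $g\colon\R^k\to\R^N$ satisfying $g\circ L=\mathrm{id}$ on $E_n$. Thus $\{x\in\R^N:g(Lx)\ne x\}\cap X\subset X\setminus E_n$ has $\mu$-measure at most $\eps$, as required. The only non-routine points are the Borel measurability of $c$ and the availability of the H\"older extension theorem with preserved exponent; both are standard, and the rest is a direct application of Theorem~\ref{thm:inverses main}(iii).
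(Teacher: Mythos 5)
Your proof is correct and follows the same route the paper takes: compute $\adim\Sigma_s=s$ by finite stability of Assouad dimension, apply Theorem~\ref{thm:inverses main}(iii) with $\phi\equiv 0$, and then derive the ``consequently'' part by stratifying $X_L$ by the pointwise H\"older constant and extending via McShane--Whitney componentwise. This matches what the paper calls a ``straightforward'' consequence; you have simply written out the measurability of $c$ and the extension step, both of which are standard and handled correctly.
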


In fact, we can guarantee $g$ to be Lipschitz up to a logarithmic factor, see Theorem~\ref{thm:assouad full}. The result extends to the setup of compactly supported measures on a finite union of $s$-dimensional $C^1$ manifolds. For related results see \cite{stotz2017almost}.

The article \cite{LosslessAnalogCompression} emphasizes the importance of proving \textit{converse statements} for compression results, i.e.~the ones demonstrating that if the dimension of the image space is too small, then compression is not possible, and  provides such a result for \textit{s-analytic} measures, see \cite[Corollary~IV.2]{LosslessAnalogCompression}. Proposition~\ref{prop: dim bound} provides a converse statement to arbitrary finite Borel measures  in $\R^N$, in terms
of their Hausdorff dimension.

\subsection{Further related topics}
Let us mention that almost sure injectivity plays an important role in some other nonlinear projection schemes. For instance, in the context of natural projection maps from the symbolic space for iterated function systems, this property is called \textit{weak almost unique coding} (see \cite[Definition~1.9]{KolossvarySimonGL}) and for self-similar systems it is known to be equivalent to the \emph{no dimension drop} condition (i.e.~the equality of the dimension of given ergodic invariant measure to the ratio of its entropy and Lyapunov exponent), see \cite[Appendix]{KolossvarySimonGL} and \cite[Corollary~4.7]{FengDimensionSelfAffine}. This observation can be successfully utilized for obtaining dimension results for certain classes of fractal attractors, see e.g. \cite{KolossvarySimonGL}. Moreover, basic techniques developed for studying typical properties of orthogonal projections can be often transferred to parametrized families of iterated functions systems satisfying the \textit{transversality condition} (an analogue of Lemma~\ref{lem: key_ineq_linear}). It was first used by Pollicott and Simon \cite{PollicottSimonDeleted} and led, for example, to results analogous to Marstrand's projection theorem (\cite{SSUPacific,SSUTrans}, see also \cite{BSSS22} for a more detailed overview and \cite{SolomyakTransversSurvey} for a recent survey).

Finally, techniques originating from the study of orthogonal projections can also be used to analyse so-called \textit{delay-coordinate maps}, i.e.~maps of the form $\phi (x) = (h(x), h(Tx), \ldots, h(T^{k-1}x))$, where $T \colon X \to X$ is a discrete-time dynamical system and $h \colon X \to \R$ is a real function (observable). The injectivity or almost sure injectivity of $\phi$ implies that the original dynamics can be faithfully modelled based only on the values of the observable. Related embedding results, know as Takens-type theorems, serve as a framework for applications in natural sciences see e.g.~\cite{T81, sm90nonlinear, SYC91, Abarbanel_book}. Recently, a probabilistic counterpart of this theory has been developed \cite{SSOY98, BGS20,BGS22,BGSPredict}, where the regularity of almost-surely defined mappings related to $\phi$ plays a crucial role. See \cite{BGSPredict} for a more detailed discussion and further references.

\subsection*{Structure of the paper}

Section~\ref{sec:prelim} contains basic definitions and a description of technical tools used in subsequent parts of the paper. In Section~\ref{sec:projections} we prove Proposition~\ref{prop: dim bound}, while Section~\ref{sec:proof_main} contains the proof of Theorem~\ref{thm:inverses main}. Theorem~\ref{thm:every direction a.s. injectivity} and Proposition~\ref{prop:self-sim} are proved in Section~\ref{sec:every_direction}. The last Section~\ref{sec:examples} provides examples showing the sharpness of the obtained results.

\section{Preliminaries}\label{sec:prelim}

We consider the Euclidean space $\R^N$, $N \ge 1$, with the standard inner product $\langle \cdot, \cdot \rangle$. The corresponding norm and diameter are denoted, respectively, by $\| \cdot \|$ and $|\cdot |$. The symbol $B_N(x,r)$ denotes the $r$-ball centred at $x$ in the Euclidean norm in $\R^N$. We often write $B(x,r)$ when the dimension is clear from the context. By $\# A$ we denote the cardinality of a set $A$.

\subsection{Dimensions}\label{subsec:dim}

\begin{defn}\label{defn:dim}
	
	For $s>0$, the \emph{$s$-dimensional $($outer$)$ Hausdorff measure} of a set $X \subset \R^N$ is defined  as
	\[ \mH^s(X) = \lim \limits_{\delta \to 0}\ \inf \Big\{ \sum \limits_{i = 1}^{\infty} |U_i|^s : X \subset \bigcup \limits_{i=1}^{\infty} U_i,\ |U_i| \leq \delta  \Big\}.\]
	The \emph{Hausdorff dimension} of $X$ is given as
	\[ \hdim X = \inf \{ s > 0 : \mathcal{H}^s(X) = 0 \} = \sup \{ s > 0 : \mathcal{H}^s(X) = \infty \}. \]
	For a bounded set $X \subset \R^N$ and $\delta>0$, let $N(X, \delta)$ denote the minimal number of balls of radius $\delta$ required to cover $X$. The \emph{lower} and \emph{upper box-counting $($Minkowski$)$ dimensions} of $X$ are defined, respectively, as
	\[ \ldim X = \liminf \limits_{\delta \to 0} \frac{\log N(X,\delta)}{-\log \delta,}\qquad \text{ and }\quad \udim X = \limsup \limits_{\delta \to 0} \frac{\log N(X,\delta)}{-\log \delta}. \]
For a finite Borel measure $\mu$ in $\R^N$, we define its (upper) Hausdorff dimension as
\[ \hdim\mu = \inf \left\{ \hdim X : X \subset \R^N \text{ Borel with } \mu(\R^N \setminus X) = 0 \right\} \]
and the lower Hausdorff dimension as
\[ \lhdim\mu = \inf \left\{ \hdim X  : X \subset \R^N \text{ Borel with } \mu(X) > 0 \right\}. \]
\end{defn}

\begin{defn}\label{defn:adim}
A bounded set $X \subset \R^N$ is said to be $(M,s)$-\emph{homogeneous} if $N(X \cap B(x, r), \rho) \leq M(r / \rho)^s$ for every $x \in X$, $0 < \rho < r$, i.e.~the intersection $B(x,r) \cap X$ can be covered by at most $M(r / \rho)^s$ balls of radius $\rho$. The \emph{Assouad dimension} of $X$ is defined as
\[ \adim X = \inf \{ s > 0 : X \text{ is } (M, s)\text{-homogeneous for some } M > 0 \}. \]
\end{defn}
It is easy to see that in the definitions of box-counting and Assouad dimensions, it is enough to consider covers by balls centred in $X$. For a bounded set $X \subset \R^N$ and a finite Borel measure $\mu$ on $X$, we have the following inequalities (see e.g. \cite[(9.1)]{Rob11}).
	\begin{equation*}
		\lhdim \mu \leq \hdim \mu \leq \hdim X \leq \ldim X \leq \udim X \leq \adim X.
	\end{equation*}

\subsection{Prevalence}\label{subsec:prevalence}

A notion of \textit{prevalence} was introduced by Hunt, Shroer and Yorke in \cite{Prevalence92} and is regarded to be an analogue of `Lebesgue almost sure' condition in infinite dimensional linear spaces.

\begin{defn}\label{def:prevalence}
Let $\mathcal V$ be a complete linear metric space (i.e.~a linear space with a complete  metric which makes addition and scalar multiplication continuous). A Borel set $\mathcal S \subset \mathcal V$ is called \emph{prevalent} if there exists a Borel measure $\nu$ in $\mathcal V$, which is positive and finite on some compact set in $\mathcal V$, such that for every $v \in \mathcal V$, we have $v + e \in \mathcal S$ for $\nu$-almost every $e \in \mathcal V$. A non-Borel subset of $\mathcal V$ is prevalent if it contains a prevalent Borel subset. 
\end{defn} 

We focus mainly on the prevalence in the space $\Lip(X, \R^k)$ of all Lipschitz functions $h \colon X \to \R^k$ on a compact set $X \subset \R^N$, endowed with the Lipschitz norm 
\begin{equation}\label{eq:lip_norm}
\|h\|_{\Lip} = \|h\|_{\infty} + \Lip(h),
\end{equation}
where $\|h\|_\infty$ is the supremum norm and $\Lip(h)$ is the Lipschitz constant of $h$. Note, however, that in Theorem~\ref{thm:inverses main} we can consider prevalence in other spaces, as explained in the following remark.

\begin{rem}\label{rem:prevalence lipschitz}
Let $\mathcal V$ be any of the spaces of Lipschitz or $C^r, r= 1,2, \ldots, \infty$, maps from (a bounded open neighbourhood of) a compact set $X\subset \R^N$ into $\R^k$, endowed with the natural complete linear metric. Note that in order to show prevalence of a set $\mathcal S \subset \mathcal V$ via linear maps, it is enough to prove that for every $\phi \in \mathcal V$ and Lebesgue-almost every $L\in \Lin(\R^N, \R^k) \simeq \R^{Nk}$ we have $\phi + L \in \mathcal S$. As all the $C^r$-spaces listed above are contained in the space of Lipschitz maps, it is enough to consider Lipschitz maps $\phi$.
\end{rem}

\subsection{Conditional measures}

It will be useful to work with a system of conditional measures for considered projections. In the sequel, we will denote Dirac's delta measure at a point $x$ by $\delta_x$. The symbol $\phi\mu$ denotes the image of a measure $\mu$ under a map $\phi$.

\begin{defn}\label{def:conditional measures}
Given a continuous map $\phi\colon X \to \R^k$ on a compact set $X \subset \R^N$ there exists a \emph{system of conditional measures} of a probability measure $\mu$ on $X$ with respect to $\phi$, i.e.~a family $\{ \mu_y : y \in \R^k\}$, such that
\begin{enumerate}[$($i$)$]
	\item\label{item:cond_supp} for every $y \in \R^k,\ \mu_y$ is a $($possibly zero$)$ Borel measure on $\phi^{-1}(\{y\})$,
	\item for $\phi\mu$-almost every $y \in \R^k$, $\mu_y$ is a Borel probability measure,
	\item\label{item:cond_total_prob} for every $\mu$-measurable set $A \subset X$, the function $\R^k \ni y \mapsto \mu_y(A)$ is $\phi\mu$-measurable and
	\[ \mu(A) = \int_{\R^k} \mu_y(A)d\phi\mu(y). \]
\end{enumerate}
\end{defn}
The existence and $\phi \mu$-almost sure uniqueness of the system of conditional measures follows from the Rokhlin's Disintegration Theorem \cite{Rohlin52}. See also \cite{SimmonsRohlin} for a more direct approach.

The following lemma characterizes almost sure injectivity in terms of conditional measures. 

\begin{lem}\label{lem:as injectivity Diracs}
Let $\phi \colon X \to \R^k$ be a continuous map on a compact set $X \subset \R^k$ and let $\mu$ be a Borel probability measure on $X$. Then $\phi$ is injective on a Borel set $X_\phi \subset X$ of full $\mu$-measure if and only if the system $\{ \mu_{\phi(x)} : x \in X \}$ of conditional measures of $\mu$ with respect to $\phi$ satisfies $\mu_{\phi(x)} = \delta_x$ for $\mu$-almost every $x \in X$.
\end{lem}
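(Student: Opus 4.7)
The plan is to prove the two implications separately, with both sides handled by constructing the obvious candidate disintegration and using the $\phi\mu$-almost sure uniqueness part of Rokhlin's Disintegration Theorem. The bulk of the work is actually verifying some measurability assertions in standard Borel spaces.

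For the $(\Rightarrow)$ direction, I would start from a Borel set $X_\phi\subset X$ with $\mu(X_\phi)=1$ on which $\phi$ is injective. By the Lusin--Souslin theorem, the image $\phi(X_\phi)$ is a Borel subset of $\R^k$ and the inverse $x(\cdot) = (\phi|_{X_\phi})^{-1}\colon \phi(X_\phi)\to X_\phi$ is Borel measurable. Then I would propose the candidate family
\[
\tilde \mu_y = \begin{cases} \delta_{x(y)} & \text{if } y\in\phi(X_\phi),\\ 0 & \text{otherwise},\end{cases}
\]
and verify directly that $\{\tilde\mu_y\}$ fulfills conditions (i)--(iii) of Definition~\ref{def:conditional measures}; the only nontrivial point is the total-probability formula, which reduces, for Borel $A\subset X$, to $\mu(\phi^{-1}(\phi(A\cap X_\phi)))=\mu(A\cap X_\phi)=\mu(A)$, using injectivity on $X_\phi$ and $\mu(X\setminus X_\phi)=0$. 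The uniqueness part of Rokhlin's theorem then gives $\mu_y=\delta_{x(y)}$ for $\phi\mu$-a.e.\ $y$, and transferring this through $\phi$ yields $\mu_{\phi(x)}=\delta_x$ for $\mu$-a.e.\ $x\in X_\phi$, hence for $\mu$-a.e.\ $x\in X$.

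For the $(\Leftarrow)$ direction, the strategy is to build the witness set $X_\phi$ explicitly from the conditional measures. First I would show that
\[
Y=\{y\in\R^k:\mu_y\text{ is a Dirac measure}\}
\]
is Borel: fixing a countable basis $\{U_n\}$ of the topology of $X$, the measure $\mu_y$ is a Dirac measure if and only if $\mu_y(U_n)\in\{0,1\}$ for every $n$ (this detects being a point mass among probability measures on a compact metric space), and each map $y\mapsto \mu_y(U_n)$ is Borel measurable by condition (iii) of Definition~\ref{def:conditional measures}. Next, for $y\in Y$ let $x(y)$ denote the atom of $\mu_y$; Borel measurability of $y\mapsto x(y)$ follows from the identity
\[
\{y\in Y:x(y)\in F\}=\{y\in Y:\mu_y(F)=1\}
\]
valid for every closed set $F\subset X$. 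Then I would define
\[
X_\phi=\{x\in X:\phi(x)\in Y \text{ and } x=x(\phi(x))\},
\]
which is Borel as the coincidence set of two Borel maps, and injectivity of $\phi|_{X_\phi}$ is immediate: if $\phi(x_1)=\phi(x_2)=y$ with $x_1,x_2\in X_\phi$, then $x_1=x(y)=x_2$.

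Finally, to see that $\mu(X_\phi)=1$, I would invoke the standing assumption: there is a Borel $\mu$-null set $N$ such that $\mu_{\phi(x)}=\delta_x$ for every $x\in X\setminus N$. Every such $x$ automatically satisfies $\phi(x)\in Y$ and $x=x(\phi(x))$, so $X\setminus N\subset X_\phi$ and hence $\mu(X_\phi)\ge 1$. I expect the measurability of the atom map $y\mapsto x(y)$ (and therefore Borelness of $X_\phi$) to be the only genuinely delicate point; everything else is direct bookkeeping with the defining properties of the disintegration and its uniqueness.
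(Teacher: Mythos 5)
Your proof is correct and follows essentially the same strategy as the paper: the forward direction constructs the candidate disintegration $y\mapsto\delta_{(\phi|_{X_\phi})^{-1}(y)}$ on $\phi(X_\phi)$ (and $0$ elsewhere) and appeals to the $\phi\mu$-a.e.\ uniqueness in Rokhlin's theorem, while the backward direction takes $X_\phi=\{x\in X:\mu_{\phi(x)}=\delta_x\}$ and reads off injectivity from $\delta_x=\delta_y\Rightarrow x=y$. You fill in more of the supporting measurability details (Lusin--Souslin for the inverse, Borelness of the Dirac locus $Y$ and of the atom selector $y\mapsto x(y)$), which the paper either leaves implicit or delegates to the cited reference [BGS22].
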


\begin{proof}
If $\phi$ is injective on $X_\phi$, then setting $\mu_{\phi(x)} = \delta_x$ for $x \in X_\phi$ and $\mu_y = 0$ for $y \notin \phi(X_\phi)$ gives a system of conditional measures of $\mu$ with respect to $\phi$ (see \cite[p.~620]{BGS22} for a detailed argument). Hence, the first implication follows by the almost sure uniqueness of the system of conditional measures. For the other implication, assume  $\mu_{\phi(x)} = \delta_x$ for $\mu$-almost every $x \in X$. Then $X_\phi = \{ x \in X : \mu_{\phi(x)} = \delta_x\}$ is the required set of injectivity. Indeed, $\mu(X_\phi) = 1$ by assumption, and if $x, y \in X_\phi$ and $\phi(x) = \phi(y)$, then $\delta_x = \delta_y$, so $x=y$.
\end{proof}

\section{Orthogonal projections and slices}\label{sec:projections}
As noted in the introduction, we consider the Grassmannian $\Gr(k,N)$ of $k$-dimensional linear subspaces of $\R^N$. We denote by $\gamma_{k,N}$ the unique rotation-invariant measure on $\Gr(k,N)$ (see \cite[Section~3.9]{mattila} for details and \cite{FR02} for an alternative construction). Recall that for $V \in \Gr(k,N)$ we denote by $P_V \colon \R^N \to V \simeq  \R^k$ the orthogonal projection onto $V$.

\begin{rem}\label{rem:linear to projection}
As we switch between Lebesgue-almost sure statements for linear transformations $L \in \Lin(\R^N, \R^k) \simeq \R^{Nk}$ and $\gamma_{k,N}$-almost sure statements for orthogonal projections $P_V$, $V \in \Gr(k,N)$, it is useful to note that the two kinds of statements are equivalent if one is interested in the injectivity of $L$ and $P_V$ on a full-measure set. Namely, Lebesgue-almost every linear map $L \colon \R^N \to \R^k$, $k \le N$, has full rank and can be represented uniquely as $L = \Psi \circ P_V$, where $V$ is the $k$-dimensional orthogonal complement of $\Ker L$, $P_V$ is the orthogonal projection onto $V$ and $\Psi \colon V \to \R^k$ is a linear isomorphism depending continuously on $L$. It is easy to see that under this identification, full Lebesgue-measure sets in $\Lin(\R^N, \R^k)$ correspond to full $\gamma_{k,N}$-measure sets in $\Gr(k,N)$. Moreover, the injectivity of $L$ on a set $Y \subset \R^N$ is equivalent to the injectivity of $P_V$ on $Y$. Similarly, an equivalence holds between the continuity or pointwise H\"older continuity of the inverses of $L|_Y$ and $P_V|_Y$. 
\end{rem}

Extending the notation from the previous section, for a compactly supported finite measure $\mu$ in $\R^N$ and a linear space $V \in \Gr(k,N)$, we denote by $\{ \mu^V_a : a \in V \}$ the system of conditional measures of $\mu$ with respect to $P_{V}$. The measures $\mu^V_a$ are concentrated on $P_V^{-1}(a) = V^\perp + a$. By Lemma~\ref{lem:as injectivity Diracs} and Remark~\ref{rem:linear to projection}, Theorem~\ref{thm: hdim injectivity} implies the following corollary.

\begin{cor}\label{cor:Dirac slices}
	Let $\mu$ be a compactly supported finite Borel measure in $\R^N$ with $\hdim \mu < k$. Then for $\gamma_{k, N}$-almost every $V \in \Gr(k, N)$ and $W = V^\perp$, the sliced measure $\mu^V_{a}$ is a point mass for $P_{V} \mu$-almost every $a \in V$.
\end{cor}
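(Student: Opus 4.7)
The plan is to deduce the corollary by chaining together the three ingredients explicitly referenced just before its statement: Theorem~\ref{thm: hdim injectivity}, Remark~\ref{rem:linear to projection}, and Lemma~\ref{lem:as injectivity Diracs}. There is essentially no new content to prove; the task is to package these results so that the conditional-measure statement falls out.

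First, I would apply Theorem~\ref{thm: hdim injectivity} to $\mu$ with the Lipschitz map $\phi \equiv 0$. Since $\hdim \mu < k$, the measure $\mu$ is in particular singular with respect to $k$-dimensional Hausdorff measure, so the theorem gives, for Lebesgue-almost every linear map $L \in \Lin(\R^N, \R^k)$, a Borel set $X_L \subset \R^N$ of full $\mu$-measure on which $L$ is injective. Next, Remark~\ref{rem:linear to projection} lets me transfer this almost-sure statement from the Lebesgue measure on $\Lin(\R^N,\R^k)$ to $\gamma_{k,N}$ on $\Gr(k,N)$: for $\gamma_{k,N}$-almost every $V \in \Gr(k,N)$ there is a Borel set $X_V \subset \R^N$ of full $\mu$-measure such that $P_V$ is injective on $X_V$.

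Fix such a $V$. I would normalise $\mu$ to a probability measure $\tilde\mu = \mu/\mu(\R^N)$ (the case $\mu \equiv 0$ is trivial); this rescaling does not affect the set of injectivity and, by the almost-sure uniqueness part of Rokhlin's disintegration theorem, leaves the conditional measures $\mu_a^V$ unchanged up to a $P_V\mu$-null set. The map $P_V|_X$ is continuous on the compact support $X$ of $\mu$, so Lemma~\ref{lem:as injectivity Diracs} applies and yields $\mu_{P_V(x)}^V = \delta_x$ for $\mu$-almost every $x \in X$. Equivalently, for $P_V\mu$-almost every $a \in V$ the sliced measure $\mu_a^V$ is the Dirac mass at the unique point of $X_V \cap P_V^{-1}(a)$, which is exactly the conclusion.

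I do not expect any real obstacle: the only minor point to verify is the compatibility between the finite and normalised versions of $\mu$ in Definition~\ref{def:conditional measures}, and the translation between $L$ and $P_V$, both of which are addressed by the cited remark and lemma.
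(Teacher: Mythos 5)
Your proposal is correct and follows exactly the route the paper intends: the paper itself gives no separate proof and simply states that Corollary~\ref{cor:Dirac slices} follows from Theorem~\ref{thm: hdim injectivity}, Remark~\ref{rem:linear to projection}, and Lemma~\ref{lem:as injectivity Diracs}, which is precisely the chain you have spelled out (including the sensible normalisation step needed to match the probability-measure hypothesis of the lemma).
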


J\"{a}rvenp\"{a}\"{a} and Mattila \cite[Theorem~3.3]{JMSections} proved a general `slicing' theorem for measures\footnote{J\"{a}rvenp\"{a}\"{a} and Mattila define `sliced measures' in \cite{JMSections} as weak-$^*$ limits of $\delta^{-k} \mu|_{P_V^{-1}(B(a, \delta))}$ as $\delta \searrow 0$, which exist for $\mH^k$-almost every $a \in V$. This is a different notion than conditional measures defined in Definition~\ref{def:conditional measures}, which satisfy $\mu^V_a = \lim \limits_{\delta \to 0} \left(\mu(P_V^{-1}(B(a, \delta)))\right)^{-1} \mu|_{P_V^{-1}(B(a, \delta))}$ (see \cite{SimmonsRohlin}). However, if $\lhdim \mu > k$ (which is the only non-trivial case in Theorem~\ref{thm:sections}), then $P_V \mu$ is absolutely continuous with respect to $\mH^k$ for $\gamma_{k,N}$-almost every $V \in \Gr(k,N)$ by Theorem~\ref{thm:marstrand}, so the two definitions are equal up to a constant for almost every $V \in \Gr(k,N)$ and $P_V \mu$-almost every $a \in V$. Moreover, Theorem~\ref{thm:sections} in \cite[Theorem~3.3]{JMSections} is valid for $\mH^k$-almost every $a \in V$ with $\mu^V_a(\R^N) > 0$, which is equivalent to the statement for $P_V \mu$-almost every $a \in V$ (by the same reason).}, which in our notation reads as follows.

\begin{thm}[{\bf Slicing Theorem}{}]\label{thm:sections}
Let $\mu$ be a compactly supported finite Borel measure in $\R^N$. Then for $\gamma_{k,N}$-almost every $V \in \Gr(k,N)$,
\[ \lhdim \mu^V_{a} \geq \lhdim \mu - k \quad \text{ for $P_V \mu$-almost every } a\in V.  \]
\end{thm}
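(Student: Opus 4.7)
The plan is to follow the classical energy method of Mattila, working first with the sliced measures $\mu_{V,a}$ from the footnote and transferring the conclusion to the conditional measures $\mu^V_a$ at the end.

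\textbf{Reductions.} First I would reduce to the case $\lhdim \mu > k$, since otherwise $\lhdim\mu - k \le 0$ and the statement is vacuous. Under this assumption, Theorem~\ref{thm:marstrand}(ii) gives $P_V\mu \ll \mH^k$ for $\gamma_{k,N}$-a.e.\ $V$, so, as in the footnote, $\mu^V_a$ coincides up to normalization with Mattila's sliced measure $\mu_{V,a}$ for $P_V\mu$-a.e.\ $a$, and the Hausdorff dimension statement for the two measures is the same. By Frostman's energy characterization of Hausdorff dimension, it then suffices to show that for every $0 < t < \lhdim\mu - k$, $\gamma_{k,N}$-a.e.\ $V \in \Gr(k,N)$ and $\mH^k$-a.e.\ $a \in V$ with $\mu_{V,a}(\R^N)>0$, the $t$-energy $I_t(\mu_{V,a})$ is finite; running $t$ through a sequence increasing to $\lhdim\mu - k$ on a countable intersection of full-measure $V$-sets then yields the claim.

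\textbf{Key estimate.} The quantitative heart of the argument is
\[
\int_{\Gr(k,N)} \int_V I_t(\mu_{V,a}) \, d\mH^k(a) \, d\gamma_{k,N}(V) \;\le\; C_{t,k,N}\, I_{t+k}(\mu|_E)
\]
for any compact $E \subset \supp\mu$. To obtain this I would approximate $\mu_{V,a}$ by the thickened measures $\alpha(k)^{-1}\delta^{-k}\mu|_{P_V^{-1}(B(a,\delta))}$, apply Fatou in $\delta \searrow 0$, and pull the $(a,V)$-integration inside via Fubini. The calculation then reduces to two geometric identities: (i) $\mH^k(B(P_V x,\delta)\cap B(P_V y,\delta)) = \delta^k F(\|P_V(x-y)\|/\delta)$ for a bounded, compactly supported function $F$, and (ii) the standard Grassmannian estimate $\int_{\Gr(k,N)} F(\|P_V u\|/\delta)\, d\gamma_{k,N}(V) \le C\,\delta^k\|u\|^{-k}$ for $\|u\|\ge \delta$ (this is the same sort of integral underlying Marstrand's projection theorem). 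Multiplying these factors gives $\|x-y\|^{-t}\|x-y\|^{-k}$ inside the $\mu|_E\times\mu|_E$ integral, yielding exactly $C\,I_{t+k}(\mu|_E)$.

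\textbf{Conclusion.} Since $t+k < \lhdim\mu$, a standard Frostman-type argument produces compact sets $E_n \subset \supp\mu$ with $\mu(E_n) \nearrow \mu(\R^N)$ and $I_{t+k}(\mu|_{E_n}) < \infty$. Applying the key estimate to each $E_n$ gives $I_t(\mu_{V,a}|_{E_n}) < \infty$ for $\gamma_{k,N}$-a.e.\ $V$ and $\mH^k$-a.e.\ $a$. The disintegration identity $\int \mu^V_a(\bigcup_n E_n)\, d(P_V\mu)(a) = \mu(\bigcup_n E_n) = 1$ together with $\mu^V_a$ being a probability measure forces $\mu^V_a$ to be concentrated on $\bigcup_n E_n$ for $P_V\mu$-a.e.\ $a$, and Frostman's lemma applied to the restrictions gives $\lhdim \mu^V_a \ge t$ (any set of positive $\mu^V_a$-measure must intersect some $E_n$ in positive measure, hence have Hausdorff dimension $\ge t$). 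Taking a countable sequence $t_j \nearrow \lhdim\mu - k$ completes the proof.

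\textbf{Main obstacle.} The delicate point is the Fubini/Fatou manipulation that passes from the $\delta$-thickened slices to the genuine sliced measures while keeping the exceptional set of $V$'s independent of $a$; handling this cleanly is what requires the compact exhaustion by the $E_n$'s and the consistent interpretation of conditional versus sliced measures under the standing hypothesis $\lhdim\mu>k$.
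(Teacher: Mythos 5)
The paper does not prove Theorem~\ref{thm:sections}: it is quoted directly from J\"{a}rvenp\"{a}\"{a} and Mattila \cite[Theorem~3.3]{JMSections}, with the accompanying footnote only reconciling their notion of sliced measure with the Rokhlin conditional measures $\mu^V_a$ used in the paper. Your sketch is a faithful reconstruction of the standard energy-theoretic argument of that source (going back to Mattila's classical slicing theorem): the reduction to $\lhdim\mu>k$, the Frostman exhaustion by compacta $E_n$ with $I_{t+k}(\mu|_{E_n})<\infty$, the Fubini/Fatou passage through the $\delta$-thickened slices, the two geometric estimates (the $\mH^k$-volume of the intersection of two $\delta$-balls and the Grassmannian bound $\gamma_{k,N}\{V:\|P_V u\|\le\delta\}\lesssim(\delta/\|u\|)^k$) combining to produce the kernel $\|x-y\|^{-t-k}$, and the final transfer to conditional measures via the absolute continuity $P_V\mu\ll\mH^k$, are precisely the steps of that proof. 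The two places that deserve slightly more care than your write-up suggests are (a) commuting the restriction to $E_n$ with the weak-$^*$ limit that defines the sliced measure, which uses closedness of $E_n$ together with a density-point argument, and (b) choosing the exceptional $V$-set in the key estimate uniformly over $n$ before passing to $\bigcup_n E_n$ and over a countable sequence $t_j\nearrow\lhdim\mu-k$; both are routine. Since the paper only cites the result, there is no competing argument in the paper to compare against.
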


Using the above result we can give a proof of Proposition~\ref{prop: dim bound}.

\begin{proof}[{Proof of Proposition~\rm\ref{prop: dim bound}}]
We claim that under the assumptions of the proposition, there exists a compact set 
$X \subset \R^N$ such that $\mu(X) > 0$ and $\nu = \mu|_X$ satisfies $\lhdim \nu > k$.
Indeed, recall (see e.g.~\cite[Proposition~10.3]{FalconerTechniques}) that
\[\hdim \mu = \underset{x \sim \mu}{\mathrm{ess sup}}\ \liminf \limits_{r \to 0} \frac{\log \mu(B(x,r))}{\log r}. \]
Now choose $X$ to be a compact subset of positive $\mu$-measure of the set
\[ \Big\{ x \in \R^N : \liminf \limits_{r \to 0} \frac{\log \mu(B(x,r))}{\log r} \geq s \Big\}\]
for fixed $s > 0$ with $k < s <\hdim \mu$. Applying Frostman's lemma (see e.g.~\cite[Theorem~8.6.3]{PU10}) we obtain $\lhdim \nu \geq s > k$.  By Theorem~\ref{thm:sections}, for $\gamma_{k, N}$-almost every $V \in \Gr(k, N)$,
 \[ \lhdim \nu^V_{a} \geq \lhdim \nu - k > 0 \text{ for } P_V \mu\text{-almost every } a \in V. \]
As Dirac's delta has dimension zero, this means that for $\gamma_{k,N}$-almost every $V \in \Gr(k,N)$, almost every conditional measure of $\nu$ with respect to $P_V$ is not a Dirac's delta, hence by Lemma~\ref{lem:as injectivity Diracs} there cannot exist a set of full $\nu$-measure on which $P_V$ is injective. As $\nu$ is absolutely continuous with respect to $\mu$, the same is true for $\mu$. By Remark~\ref{rem:linear to projection}, this holds also for almost every linear map $L \colon \R^N \to \R^k$.
\end{proof}

\section{Proof of Theorem~\ref{thm:inverses main}} \label{sec:proof_main}

Let $E = E(N,k)$ denote the set of linear maps $L \colon \R^N \to \R^k$ of the form
\[ L x = \big(\langle l_1, x \rangle , \ldots, \langle l_k, x \rangle\big),\]
where $l_1, \ldots, l_k \in \R^N$ satisfy $\|l_1\|, \ldots, \|l_k\| \leq 1$. As $E$ may be identified with $\left(B_N(0,1)\right)^k$, we will denote by $\Leb$ the normalized $k$-fold product of Lebesgue measures on $B_N(0,1)$, considered as a probability measure on $E$. Note that it is enough to prove the assertion of  Theorem~\ref{thm:inverses main} for Lebesgue-almost every $L \in E$, as then a rescaling gives the result for almost every linear mapping $L\colon\R^N \rightarrow \R^k$. By the Cauchy--Schwarz inequality, for all $x\in \R^N$,
\begin{equation}\label{eq:E}
	\|Lx\| \leq \sqrt{N} \,\|x\|.
\end{equation}

The following lemma is the key technical ingredient of the proof.

\begin{lem}\label{lem: key_ineq_linear}
	For every $x \in \R^N \setminus \{ 0 \}$, $z \in \R^k$ and $\eps>0$,
	\[
	\Leb (\{ L \in E : \|Lx + z \| \leq \eps \}) \leq C\frac{\eps^k}{\|x\|^k}, \]
	where $C > 0$ depends only on $N$ and $k$.
\end{lem}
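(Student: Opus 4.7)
My plan is to exploit the product structure of $E \simeq B_N(0,1)^k$ and $\mathrm{Leb}$, reducing the inequality to a one-dimensional slab estimate.

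First, I would parametrize $L \in E$ by its defining vectors $(l_1,\ldots,l_k) \in B_N(0,1)^k$ and write $z = (z_1,\ldots,z_k)$, so that
\[
\|Lx+z\|^2 = \sum_{i=1}^k \bigl(\langle l_i, x\rangle + z_i\bigr)^2.
\]
In particular, the event $\{\|Lx+z\| \le \eps\}$ is contained in the product event $\{|\langle l_i, x\rangle + z_i| \le \eps \text{ for every } i=1,\ldots,k\}$. Since $\mathrm{Leb}$ is the $k$-fold product of the normalized Lebesgue measure $\lambda$ on $B_N(0,1)$, this gives
\[
\mathrm{Leb}\bigl(\{L \in E : \|Lx+z\| \le \eps\}\bigr) \le \prod_{i=1}^k \lambda\bigl(\{l \in B_N(0,1) : |\langle l, x\rangle + z_i| \le \eps\}\bigr).
\]

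Next, I would estimate each factor by a one-dimensional slab-through-the-ball computation. For fixed $x \ne 0$ and $a \in \R$, the set $\{l \in B_N(0,1) : |\langle l, x\rangle - a| \le \eps\}$ is the intersection of $B_N(0,1)$ with a slab of width $2\eps/\|x\|$ perpendicular to $x$. Using Fubini with coordinates along and orthogonal to $x/\|x\|$, the $(N-1)$-dimensional cross-sections have volume at most $\mathrm{vol}(B_{N-1}(0,1))$, so
\[
\lambda\bigl(\{l \in B_N(0,1) : |\langle l, x\rangle - a| \le \eps\}\bigr) \le c_N \cdot \frac{\eps}{\|x\|},
\]
where $c_N = 2\,\mathrm{vol}(B_{N-1}(0,1))/\mathrm{vol}(B_N(0,1))$ depends only on $N$. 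In the regime $\eps \ge \|x\|$ the factor is trivially bounded by $1$, but then the desired inequality $\eps^k/\|x\|^k \ge 1$ holds automatically, so combining the two regimes absorbs them into a single constant $C = \max\{1, c_N^k\}$ depending only on $N$ and $k$. Multiplying the $k$ slab estimates then yields the claimed bound.

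I do not anticipate a genuine obstacle here: the lemma is essentially a Fubini-plus-slab-volume calculation, with independence across the rows $l_i$ doing all the work. The only mildly delicate point is to make sure the estimate remains valid (and the constant clean) when $\eps/\|x\|$ is of order $1$ or larger, which is handled by the trivial bound as noted. Everything else reduces to comparing the volumes of $B_{N-1}(0,1)$ and $B_N(0,1)$, which is explicit.
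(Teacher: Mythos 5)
Your proof is correct and is essentially the standard argument; the paper itself gives no proof here but simply cites Robinson's book, Lemma~4.1, whose proof proceeds exactly as you describe, exploiting the product structure of $\Leb$ on $E\simeq B_N(0,1)^k$ together with a slab-volume estimate and absorbing the regime $\eps\gtrsim\|x\|$ into the constant.
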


For the proof see \cite[Lemma~4.1]{Rob11}. We will prove each of the assertions of Theorem~\ref{thm:inverses main} separately.

\subsection{Proof of assertion~\ref{it:hdim main} of Theorem~\ref{thm:inverses main}}
We actually prove the following, slightly stronger version of the statement.

\begin{thm}\label{thm:continuity extended}
Let $\mu$ be a finite Borel measure in $\R^N$ with a compact support $X$ satisfying $\mH^{k}(X) = 0$ and let $\phi\colon X \to \R^k$ be a Lipschitz map. Then for almost every linear map $L\colon \R^N \to \R^k$ there exists a Borel set $X_L \subset X$ of full $\mu$-measure such that for every $x \in X_L$ and every $\eps > 0$, there exists $\delta > 0$ for which the map $\phi_L = \phi + L$ satisfies
\begin{equation}\label{eq:continuous inverse}
\text{ for every } y \in X, \text{ if }	\|\phi_L(x) - \phi_L(y)\| \leq \delta, \text{ then } \| x - y \| \leq \eps.
\end{equation}
\end{thm}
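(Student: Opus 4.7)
The plan is to first recast the continuity condition \eqref{eq:continuous inverse} as a uniqueness statement about preimages. By compactness of $X$ and continuity of $\phi_L$, condition \eqref{eq:continuous inverse} holds at a point $x \in X$ if and only if $\phi_L^{-1}(\phi_L(x)) \cap X = \{x\}$. The nontrivial direction is: if \eqref{eq:continuous inverse} fails for some $\varepsilon > 0$, one extracts a sequence $y_n \in X$ with $\|y_n - x\| > \varepsilon$ and $\phi_L(y_n) \to \phi_L(x)$; a convergent subsequence produces $y \in X$ with $\|y-x\| \geq \varepsilon$ and $\phi_L(y) = \phi_L(x)$. Hence, defining
\[
B_{L,\eta} = \{x \in X : \exists\, y \in X \text{ with } \|y-x\| \geq \eta \text{ and } \phi_L(y) = \phi_L(x)\},
\]
it suffices to prove that for Lebesgue-almost every $L \in E$ and every $n \in \N$, $\mu(B_{L,1/n}) = 0$; then we take $X_L = X \setminus \bigcup_{n \in \N} B_{L,1/n}$.

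By Fubini's theorem, this reduces to showing that for each fixed $x \in X$ and $\eta > 0$,
\[
\Leb\bigl(\{L \in E : x \in B_{L,\eta}\}\bigr) = 0.
\]
Fix such $x$ and $\eta$. Given any $\delta > 0$, the assumption $\mH^k(X) = 0$ lets us cover $X$ by countably many balls $B_N(y_i, r_i)$, $i \in I$, with centres $y_i \in X$, radii $r_i \leq \eta/4$, and $\sum_i r_i^k \leq \delta$. Discard those balls that do not meet $X \setminus B_N(x,\eta)$; for every retained index $i$, $\|y_i - x\| \geq \eta - r_i \geq \eta/2$.

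For the key probabilistic estimate, suppose $y \in B_N(y_i, r_i) \cap X$ satisfies $\|y-x\| \geq \eta$ and $\phi_L(y) = \phi_L(x)$. By the triangle inequality, the Lipschitz bound on $\phi$, and \eqref{eq:E},
\[
\|L(y_i - x) - (\phi(x) - \phi(y_i))\| \leq \|L(y_i - y)\| + \|\phi(y) - \phi(y_i)\| \leq (\sqrt{N} + \Lip(\phi))\, r_i.
\]
Applying Lemma~\ref{lem: key_ineq_linear} to the vector $y_i - x \neq 0$ (with $\|y_i - x\| \geq \eta/2$) and summing over $i$ gives
\[
\Leb\bigl(\{L \in E : x \in B_{L,\eta}\}\bigr) \leq \sum_{i} C\,\frac{\bigl((\sqrt{N} + \Lip(\phi))\, r_i\bigr)^k}{(\eta/2)^k} \leq C'(\eta, N, k, \Lip\phi)\, \delta.
\]
Letting $\delta \to 0$ finishes the proof.

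I expect the main obstacle to be the first reduction — identifying \eqref{eq:continuous inverse} with the uniqueness of $\phi_L$-preimages inside $X$ via compactness — since it determines the correct quantifier structure for applying Fubini. The rest is a routine cover-and-sum argument driven by $\mH^k(X) = 0$ and the transversality estimate of Lemma~\ref{lem: key_ineq_linear}; the only care needed is in restricting radii to $r_i \leq \eta/4$ so that all relevant ball centres are bounded away from $x$ uniformly by $\eta/2$.
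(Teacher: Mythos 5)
Your argument is correct and takes essentially the same route as the paper's proof: both reduce continuity of the inverse at $x$ to injectivity of $\phi_L$ at $x$ via compactness of $X$, then establish the Lebesgue-measure-zero estimate in $L$ by covering the part of $X$ away from $x$ with small balls (driven by $\mH^k(X)=0$), summing the transversality bound of Lemma~\ref{lem: key_ineq_linear}, and applying Fubini. The only differences are presentational: you state the compactness reduction up front while the paper defers it, and your covering bookkeeping (cover $X$, then discard balls near $x$ after capping radii at $\eta/4$) replaces the paper's direct covering of $X \setminus B(x,1/n)$ with centres in that set.
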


\begin{proof}
The first part of the argument is obtained directly from the proof of \cite[Theorem~3.1]{BGS20}. We include the arguments for the convenience of the reader. First, we will prove that for every $x \in X$ we have
\begin{equation}\label{eq:almost every injectivity Leb} \Leb\big( \big\{ L\in E : \underset{y \in X\setminus \{x\}}{\exists}\  \phi_L(x) = \phi_L(y)  \big\} \big) = 0.
\end{equation}
Note that the above set, as well as all similar sets we consider in this section, are Borel measurable as a consequence of standard considerations (see \cite[Lemma~2.4]{BGS20}). As $\phi\colon X \to \R^k$ is a Lipschitz map, there exists $H>0$ so that for all $x,y\in X$,
\begin{equation}\label{eq:H2}
	\|\phi(x)-\phi(y)\| \leq H \,\|x-y\|.
\end{equation} 
Fix $x\in X$, $\eps>0$ and let
\[
K_{n} = \Big\{ y \in X :  \|x-y\| \geq\frac{1}{n} \Big\}
\]
for $n \in\N$. Define
\[
B_n = \big\{ L \in E : \underset{y \in K_n}{\exists}\ \phi_L (x) = \phi_L (y) \big\}.
\]
and note that for \eqref{eq:almost every injectivity Leb} it suffices to prove $\Leb(B_n) = 0$ for each $n$. As $\hdim K_n\leq \hdim X < k$, there exists a collection of balls $B_N(y_i, \eps_i)$, $i \in \N$, for some $y_i \in K_{n}$ and $\eps_i > 0$, such that
\begin{equation*} K_n \subset \bigcup \limits_{i \in \N} B_N(y_i, \eps_i)\quad \text{and} \quad  \sum \limits_{i = 1}^\infty \eps_i^{k} \leq \eps.
\end{equation*}
Let $L \in B_n$. Then there is $y \in K_n$ such that $\phi_L (x) = \phi_L (y)$. Clearly, $y \in B_N (y_i, \eps_i)$ for some $i \in \N$. We calculate
\begin{align*}
	\| \phi_L(x)-\phi_L(y_i)  \| &\leq \| \phi_L(x) - \phi_L(y)\|+\| \phi_L(y) - \phi_L(y_i)\|\\
	&= \| \phi_L(y) - \phi_L(y_i)\|\\
	& \leq \|\phi(y_i) - \phi(y)\| + \| L(y_i-y)\|\\
	&\leq H \|y_i-y\| + \sqrt{N} \|y_i - y\|\\
	&\leq M \eps_i
\end{align*}
for  $M=H+ \sqrt{N}$, by \eqref{eq:E} and \eqref{eq:H2}. This shows 
\[ B_n \subset \bigcup \limits_{i \in \N} \{ L \in E : \| \phi_L(x)-\phi_L(y_i) \| \leq M\eps_i \}. \]
Thus, using Lemma~\ref{lem: key_ineq_linear} and the fact $\| x-y_i\|\geq \frac{1}{n}$, we obtain
\begin{align*}
	\Leb(B_n) &\leq \sum \limits_{i = 1}^\infty \Leb(\{ L \in E : \| L(x-y_i)+\phi(x)-\phi(y_i) \| \leq M\eps_i \})\\
	&\leq \frac{C M^k}{1/n^k}\sum \limits_{i = 1}^\infty \eps_i^{ k} \leq  C M^k n^k\eps.
\end{align*}
As  $\eps > 0$ is arbitrary, we obtain $\Leb(B_n) = 0$, and thus \eqref{eq:almost every injectivity Leb} is established.
Combining \eqref{eq:almost every injectivity Leb} with Fubini's theorem (see e.g.~\cite[Theorem~8.8]{R87}), we obtain 
\begin{equation}\label{eq:almost every injectivity}\mu \big( \big\{ x \in X : \underset{y \in X \setminus \{ x \}}{\exists}\  \phi_{L}(x) = \phi_L(y)  \big\} \big) = 0
\end{equation}
for Lebesgue-almost every $L\in E$. Hence, the set
\[ X_L = X \setminus \big\{ x \in X : \underset{y \in X \setminus \{ x \}}{\exists} \phi_{L}(x) = \phi_L(y)  \big\} \]
is a full $\mu$-measure set on which $\phi_L$ is injective (which proves Theorem~\ref{thm: hdim injectivity}). To obtain additionally the continuity of $\phi_L^{-1}$ on $X_L$, fix $L \in E$ satisfying \eqref{eq:almost every injectivity}. We claim that every $x \in X_L$ satisfies  \eqref{eq:continuous inverse}. If not, then there exists $\eps > 0$ such that for every $n \geq 1$ there exists $y_n \in X$ satisfying
\[
\| \phi_L(x) - \phi_L(y_n)\| \leq \frac{1}{n} \quad \text{and} \quad \| x-y_n\|>  \eps.
\]
As $X$ is compact, there is a converging subsequence $y_{n_k} \to y$ for some $y=y(L, x) \in X$. By the continuity of $\phi_L$, we have $\phi_L(x) = \phi_L(y)$ and $  \| x-y\|\geq  \eps$, in particular $x\neq y$, contradicting $x \in X_L$.
\end{proof}

\subsection{Proof of assertion~\ref{it:udim main} of Theorem~\ref{thm:inverses main}}
The proof combines the techniques of \cite[Theorem~3.1]{HK99} and \cite[Theorem~3.1]{BGS20}.

\begin{lem}\label{lem: eps delta Leb}
Let $X$ be a compact subset of $\R^N$ with $\udim X<k$. Fix $\theta \in \left(0, k - \udim X\right)$ and a Lipschitz map $\phi\colon X \to \R^k$. Then there exists a constant $D>0$ such that 
\[ \Leb\big( \big\{ L \in E : \underset{y \in X}{\exists}\ \|\phi_L(x) - \phi_L(y)\| \leq \eps  \text{ and }  \|x - y\| \geq \delta  \big\} \big) \leq D \delta^{-k}\eps^{k - \udim X - \theta} \]
for every $x \in X$ and $0 < 2\eps \leq \delta$, where $\phi_L = \phi+L$.
\end{lem}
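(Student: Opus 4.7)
The plan is to mimic the covering strategy from the proof of Theorem~\ref{thm:continuity extended}, but instead of covering all of $X$ at a single scale using a Hausdorff-type bound, to perform a dyadic decomposition of the relevant ``far-away'' set $\{y \in X : \|x-y\| \ge \delta\}$ around $x$ and to invoke the definition of upper box-counting dimension at the scale $\eps/M$ to count elements in each annulus. Fix $\theta \in (0, k - \udim X)$; by the definition of $\udim$ and compactness of $X$, there is a constant $C_0 = C_0(X,\theta)$ with $N(X,r) \le C_0\, r^{-(\udim X + \theta)}$ for all $r \in (0, \diam X]$. Let $M = H + \sqrt{N}$, where $H$ is a Lipschitz constant of $\phi$; the hypothesis $2\eps \le \delta$ then yields $\eps/M \le \delta/2$.

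For a fixed $x \in X$, split the set $\{y \in X : \|x-y\| \ge \delta\}$ into dyadic annuli
\[
A_j = \{y \in X : 2^j \delta \le \|x-y\| < 2^{j+1}\delta\}, \qquad j = 0, 1, \ldots, J,
\]
where $J = \lfloor \log_2(\diam X / \delta)\rfloor$, and cover each $A_j$ by at most $N(X, \eps/M) \le C_0 (M/\eps)^{\udim X + \theta}$ balls $B_N(y_{j,i}, \eps/M)$ centred at points $y_{j,i} \in A_j$. The triangle inequality together with $\eps/M \le \delta/2$ gives the lower bound $\|x - y_{j,i}\| \ge 2^{j-1}\delta$ (with the convention $2^{-1}\delta = \delta/2$ when $j=0$). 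For any $L$ in the event of interest, with a witness $y \in A_j \cap B_N(y_{j,i}, \eps/M)$, the norm and Lipschitz estimates \eqref{eq:E} and \eqref{eq:H2} produce
\[
\|L(x - y_{j,i}) + \phi(x) - \phi(y_{j,i})\| \le \eps + M\cdot\tfrac{\eps}{M} = 2\eps,
\]
exactly as in the proof of Theorem~\ref{thm:continuity extended}.

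Lemma~\ref{lem: key_ineq_linear} applied with $2\eps$ in place of $\eps$ and $z = \phi(x) - \phi(y_{j,i})$ then bounds the Lebesgue measure of such $L$ by $C(2\eps)^k/(2^{j-1}\delta)^k$. Summing over $i$ (yielding a factor $C_0 (M/\eps)^{\udim X + \theta}$) and over $j$ (where the geometric series $\sum_{j \ge 0} 2^{-jk}$ converges since $k \ge 1$) factors out $\eps^{k - \udim X - \theta}/\delta^k$ and produces the claimed estimate $D\,\delta^{-k}\eps^{k - \udim X - \theta}$. The argument presents no substantial obstacle; the only bookkeeping issue is ensuring that the polynomial cover bound is applied in an admissible range, which is handled by absorbing the bounded case $\eps/M \gtrsim \diam X$ (where $X$ is covered by a single ball) into the constant $D$.
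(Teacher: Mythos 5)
Your proof is correct, but it takes a more elaborate route than the paper's. The paper covers all of $X$ once at scale $\eps$ by $N_\eps \lesssim \eps^{-(\udim X+\theta)}$ balls, observes that any cover-centre $y_i$ whose ball contains a witness $y$ with $\|x-y\|\ge\delta$ satisfies the \emph{uniform} lower bound $\|x-y_i\|\ge\delta-\eps\ge\delta/2$, and applies Lemma~\ref{lem: key_ineq_linear} once per cover element; the sum over $i$ immediately yields $D\delta^{-k}\eps^{k-\udim X -\theta}$. You instead decompose $\{y\in X:\|x-y\|\ge\delta\}$ into dyadic annuli $A_j$, cover each annulus at scale $\eps/M$, use the per-annulus lower bound $\|x-y_{j,i}\|\ge 2^{j-1}\delta$, and then sum the convergent geometric series $\sum_j 2^{-jk}$. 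This is valid, but the dyadic structure buys nothing here: each annulus is covered by (up to constants) just as many balls as the whole of $X$, so the improved lower bound $2^{j-1}\delta$ over $\delta/2$ is immediately cancelled by the factor-of-$J$ increase in the total number of cover elements, leaving you back with the same $\delta^{-k}\eps^{k-\udim X-\theta}$ up to the constant $\sum_j 2^{-jk}$. Two small remarks: (1) since you insist $y_{j,i}\in A_j$, you already have $\|x-y_{j,i}\|\ge 2^j\delta$ directly, so the triangle-inequality step producing the weaker $2^{j-1}\delta$ is superfluous (though harmless); (2) your choice to cover at scale $\eps/M$ so that the total perturbation is exactly $2\eps$ is a clean normalization, mathematically equivalent to the paper's choice of scale $\eps$ with bound $M\eps$. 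The handling of the trivial range $\eps/M\gtrsim\diam X$ by absorbing into $D$ is the right way to justify the global validity of the polynomial covering bound.
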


\begin{proof} By the definition of $\udim$, there exists a constant $D=D(X, \theta)$ such that for every $\eps>0$ there exists a cover
\begin{equation}\label{eq:eps cover} X \subset \bigcup \limits_{i=1}^{N_{\eps}} B(y_i, \eps)\ \text{ with }\ N_{\eps} \leq D \eps^{-(d + \theta)}.
\end{equation}
Consider $x,y \in X$ and $L \in E$ such that $\|\phi_L(x) - \phi_L(y)\| \leq \eps$ and  $\|x - y\| \geq \delta$. Let $y_i$ be such that $y \in B(y_i, \eps)$. Then, recalling that we assume $2\eps \leq \delta$,
\[\begin{split} \|L(x - y_i) - \left(\phi(y_i) - \phi(x)\right)\| &= \|\phi_L(x) - \phi_L(y_i)\| \leq \|\phi_L(x) - \phi_L(y)\| + \|\phi_L(y) - \phi_L(y_i)\| \\
& \leq \eps + \big(\Lip(\phi) + \sup \limits_{L \in E}\| L \|\big)\eps \leq M\eps,
\end{split}\]
where $M = 1 + \Lip(\phi) + \sup \limits_{L \in E}\| L \| < \infty$. Moreover,
\[ \|x - y_i\| \geq \|x - y\| - \|y - y_i\| \geq \delta - \eps \geq \frac \delta 2. \]
Therefore,
\begin{align*} &\big\{ L \in E : \underset{y \in X}{\exists}\ \|\phi_L(x) - \phi_L(y)\| \leq \eps  \text{ and }  \|x - y\| \geq \delta  \big\} \\
&\subset \bigcup \limits_{i=1}^{N_\eps} \left\{ L \in E : \|L(x - y_i) -\left(\phi(y_i) - \phi(x)\right) \| \leq M\eps  \text{ and }  \|x - y_i\| \geq \delta/2  \right\}.
\end{align*}
Hence, by Lemma~\ref{lem: key_ineq_linear} and \eqref{eq:eps cover},
\[
\begin{split}
\Leb & \big( \big\{ L \in E : \underset{y \in X}{\exists}\ \|\phi_L(x) - \phi_L(y)\| \leq \eps  \text{ and }  \|x - y\|
 \geq \delta  \big\} \big) \\
  & \leq C2^k N_\eps \delta^{-k} M^k\eps^k \leq C2^k D M^k \delta^{-k}\eps^{k - d - \theta}.
\end{split}
\]

\end{proof}

\begin{proof}[{Proof of Theorem~\rm\ref{thm:inverses main}\ref{it:udim main}}]
Set $d = \udim(\supp\mu)$. Fix $\alpha \in \left(0, 1 -  \frac{d}{k}\right)$ and let $\theta \in (0, k - d)$ be such that $\alpha < 1 - \frac{d+ \theta}{k}$. Let $H = \sup \limits_{L \in E} \diam \left( \phi_L(X)\right)$. For a fixed $x \in X$, by \eqref{eq:almost every injectivity Leb}  (which we can apply as $\hdim \mu \leq d < k$) and Lemma~\ref{lem: eps delta Leb},
\[\begin{split}
\Leb &  \big( \big\{ L \in E : \underset{M>0}{\forall}\ \underset{y \in X}{\exists}\ \|x - y\| > M\| \phi_L(x) - \phi_L(y) \|^\alpha  \big\} \big) \\
& = \lim \limits_{M \to \infty} \Leb \big( \big\{ L \in E : \underset{y \in X}{\exists}\ \|x - y\| > M\| \phi_L(x) - \phi_L(y) \|^\alpha  \big\} \big) \\
& =   \lim \limits_{M \to \infty} \sum \limits_{m=0}^\infty \Leb\big( \big\{ L \in E : \underset{y \in X}{\exists}\ 2^{-(m+1)}H  <\|\phi_L(x) - \phi_L(y)\| \leq 2^{-m}H  \\
& \hspace{150pt}  \text{ and }  \|x - y\| > M\|\phi_L(x) - \phi_L(y)\|^\alpha  \big\} \big)  \\
& \hspace{20pt} + \Leb \big( \big\{ L \in E : \underset{y \in X}{\exists}\ \phi_L(x) = \phi_L(y) \text{ and } \|x - y \| > 0 \big\} \big) \\
& \overset{\eqref{eq:almost every injectivity Leb}}{\leq}  \lim \limits_{M \to \infty} \sum \limits_{m=0}^\infty \Leb\big( \big\{ L \in E : \underset{y \in X}{\exists}\ \|\phi_L(x) - \phi_L(y)\| \leq 2^{-m}H  \text{ and }  \|x - y\| > MH^\alpha2^{-\alpha(m+1)}  \big\} \big)  \\
& \overset{\mathclap{\text{Lemma }\ref{lem: eps delta Leb}}}{\leq}\ \ \ \lim \limits_{M \to \infty} \sum \limits_{m=0}^\infty D M^{-k}H^{-\alpha k}2^{\alpha k (m+1)} 2^{-m(k - d - \theta)}H^{k - d - \theta} \\
& = \lim \limits_{M \to \infty} D{M^{-k}}H^{k(1-\alpha) - d - \theta}2^{\alpha k} \sum \limits_{m=0}^\infty 2^{m(\alpha k - k + d + \theta)} = 0,
\end{split}
\]
as the series $\sum \limits_{m=0}^\infty 2^{m(\alpha k - k + d + \theta)}$ converges since we assume $\alpha < 1 - \frac{d+ \theta}{k}$.
This proves that for every $x \in X$, the condition 
\begin{equation}\label{eq:pointwise holder inverse} \|x - y\| \leq M\|\phi_L (x) - \phi_L (y)\|^\alpha \quad \text {for some } M=M(x,L) \text{ and every } y \in X
\end{equation}
is satisfied for almost every $L \in E$. Therefore, by Fubini's theorem, for almost every $L \in E$, the condition \eqref{eq:pointwise holder inverse} holds for $\mu$-almost every $x\in X$. Finally, note that by taking a countable intersection of full Lebesgue measure sets, we can assume that for almost every $L \in E$, the condition \eqref{eq:pointwise holder inverse} holds for every $\alpha < 1 - \frac{d+ \theta}{k}$.
\end{proof}

\subsection{Proof of assertion~\ref{it:adim main} of Theorem~\ref{thm:inverses main}}
Again, we prove a stronger result.

\begin{thm}\label{thm:assouad full}
	Let $R>0$ and $\eta>1$. Let $\mu$  be a probability measure in $\R^N$ with a compact support $X$ satisfying $|X| \leq R$ and $\dim_A X < k$. Let $\phi\colon X \to \R^k$ be a Lipschitz map and fix $\theta\in (0,k-\adim X)$. Then for almost every linear mapping
	$L\colon \R^N \rightarrow \R^k$ there exists a Borel set $X_L \subset X$ of full $\mu$-measure such that for each point $x \in X_L$ there exists $C> 0$ for which the map $\phi_L = \phi + L$ satisfies
	\begin{equation}\label{eq:pointwise log-Lip inverse}
		\|\phi_L (x) - \phi_L (y)\| \geq C f(\|x-y\|) \quad \text{for every } y \in X,
	\end{equation}
	where
	$$f(x)= \frac{x}{(\log_2(2R/x))^{\eta/\theta}}.$$
	
\end{thm}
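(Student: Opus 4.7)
The plan is to mimic the proof of part~(ii), but to replace the single-scale uniform box-counting cover by Assouad's scale-free $(M,s)$-homogeneity, so that covers at every dyadic scale around each point can be exploited simultaneously. Fix $\eta>1$ and $\theta\in(0,k-\adim X)$. Since $\theta/\eta<\theta<k-\adim X$, we may pick $s\in(\adim X,\,k-\theta/\eta)$, which secures both $(M,s)$-homogeneity of $X$ (with covering balls centered in $X$) and the inequality $\eta(k-s)/\theta>1$ that will drive convergence of the key series. Put $H=\Lip(\phi)+\sqrt N$, so that $\Lip(\phi_L)\le H$ uniformly for $L\in E$, and observe that $f$ is increasing on $(0,R]$ with $f(2^{-m}R)=2^{-m}R(m+1)^{-\eta/\theta}$.

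For each $x\in X$, each $m\ge 0$, and each $C\in(0,1/4]$, introduce the dyadic annulus $A_m(x)=\{y\in X:2^{-(m+1)}R<\|x-y\|\le 2^{-m}R\}$ and the ``bad'' set of linear maps
\[ B_m(x,C)=\{L\in E:\exists\,y\in A_m(x)\text{ with }\|\phi_L(x)-\phi_L(y)\|<Cf(2^{-m}R)\}. \]
By $(M,s)$-homogeneity applied to $X\cap B(x,2^{-m}R)$, cover $A_m(x)$ by $N_m\le M(2^{-m}R/\rho_m)^s$ balls of radius $\rho_m:=Cf(2^{-m}R)$ centered at points $y_{m,i}\in X$. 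For $L\in B_m(x,C)$ with witness $y\in B(y_{m,i},\rho_m)$, the triangle inequality combined with $\Lip(\phi_L)\le H$ gives $\|\phi_L(x)-\phi_L(y_{m,i})\|\le C(1+H)f(2^{-m}R)$, while $\rho_m\le C\cdot 2^{-m}R\le 2^{-(m+2)}R$ yields $\|x-y_{m,i}\|\ge 2^{-(m+2)}R$. Lemma~\ref{lem: key_ineq_linear} combined with the union bound and the identity $f(2^{-m}R)/2^{-m}R=(m+1)^{-\eta/\theta}$ then produces
\[ \Leb(B_m(x,C))\le K_1\,\frac{C^{k-s}}{(m+1)^{\eta(k-s)/\theta}}, \]
where $K_1$ is independent of $x$, $m$, and $C$.

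Summing over $m$ and using $\eta(k-s)/\theta>1$ gives $\Leb(B(x,C))\le K_2\,C^{k-s}$ for $B(x,C):=\bigcup_m B_m(x,C)$. Because $f$ is increasing, $B(x,C)$ contains every $L$ such that some $y\in X\setminus\{x\}$ satisfies $\|\phi_L(x)-\phi_L(y)\|<Cf(\|x-y\|)$; the nested intersection $B(x,0):=\bigcap_{C>0}B(x,C)$ therefore has Lebesgue measure zero for every $x\in X$. Joint Borel measurability of $\{(x,L):L\in B(x,C)\}$ (via standard continuity/separability arguments as in \cite[Lemma~2.4]{BGS20}) permits Fubini, so for almost every $L\in E$ the set
\[ X_L:=\{x\in X:\exists\,C_x>0\ \forall\,y\in X,\ \|\phi_L(x)-\phi_L(y)\|\ge C_xf(\|x-y\|)\} \]
is Borel of full $\mu$-measure, and \eqref{eq:pointwise log-Lip inverse} holds on it. A rescaling, as already used elsewhere in Section~\ref{sec:proof_main}, extends the conclusion from $L\in E$ to almost every linear $L\colon\R^N\to\R^k$.

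The main obstacle is the calibration of $\rho_m$: taking it proportional to $f(2^{-m}R)$ is precisely what makes the Assouad covering count $(2^{-m}R/\rho_m)^s$ and the contribution $(Cf(2^{-m}R)/2^{-m}R)^k$ from Lemma~\ref{lem: key_ineq_linear} combine into exactly $C^{k-s}(m+1)^{-\eta(k-s)/\theta}$, whose summability requires precisely the assumption $\eta>1$ together with the choice $s<k-\theta/\eta$. The rest of the argument is bookkeeping closely parallel to the proof of part~(ii).
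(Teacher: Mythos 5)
Your proof is correct and follows essentially the same strategy as the paper's: dyadic annuli around each fixed $x$, Assouad-homogeneous covers of each annulus at a scale proportional to $f$ of its outer radius, a union bound driven by Lemma~\ref{lem: key_ineq_linear}, summability of the resulting series $\sum_m (m+1)^{-\eta(k-s)/\theta}$ thanks to $\eta>1$, and Fubini. The only cosmetic differences are that the paper parameterizes the shrinking scale by a geometric factor $c^{-i}$ rather than a continuous constant $C$, and fixes the homogeneity exponent at $s=k-\theta$ rather than an arbitrary $s\in(\adim X,\,k-\theta/\eta)$; both choices lead to the same exponent $\eta$ in the series and the same conclusion.
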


\begin{proof}
Once more, by Fubini's theorem it is enough to prove that for every $x \in X$, the condition \eqref{eq:pointwise log-Lip inverse} holds for almost every $L\in E$. The rest of the proof is a combination of  the methods set forth in \cite[Theorem~5.2]{olson2002bouligand} and \cite[Theorem~3.1]{BGS20}. As $\phi\colon X \to \R^k$ is a Lipschitz map, there exists $H>0$ so that for all $x,y\in X$,
\begin{equation}\label{eq:H}
		\|\phi(x)-\phi(y)\| \leq H \,\|x-y\|.
	\end{equation} 
	Fix $x\in X$. Define $r_n=\frac{R}{2^n}$ and $\rho_n=f(r_{n-1})>0$. Note that for $0<x\leq R$, we have $\frac{f(x)}{x}\leq 1$ as $\log_2 \frac{2R}{x}\geq 1$. Moreover, a simple calculation shows that the function $f$ is monotone increasing on $(0, R]$. For $n\geq 1$, define
	$$Z_n=\{y\in X : \,r_n\leq \|y-x\|\leq r_{n-1}\}.$$
		By the definition of the Assouad dimension, there exists $K>0$ such that for every $0<s<r_{n-1}$ and a  ball $B$ of radius $r_{n-1}$, the set 
	$X\cap B$ may be covered by $K(\frac{r_{n-1}}{s})^{k-\theta}$ balls of radius $s$. Let $c>2$ satisfy $c^\theta >K$. We conclude that the set $Z_n$, which is contained in a ball $B$ of radius $r_{n-1}$ around $x$, may be covered by at most $\ell_{n,i}\leq K c^{i(k-\theta)}(\frac{r_{n-1}}{\rho_n})^{k-\theta}$ balls $\{B(a_{n,i,j}, \frac{\rho_n}{c^i})\}_{j=1}^{\ell_{n,i}}$  of radius $\frac{\rho_n}{c^i}$ (with centers in $Z_n$) for $i\geq 1$ (recall that $\rho_n\leq r_{n-1})$. Thus, $Z_n\subset \bigcup \limits_{j=1}^{\ell_{n,i}} B(a_{n,i,j}, \frac{\rho_n}{c^i})$. For $i\geq 2$, define
		$$U_i=\bigcup \limits_{n=1}^\infty \bigcup \limits_{j = 1}^{\ell_{n,i}} B\Big(a_{n,i,j}, \frac{2\rho_n}{c^i}\Big).$$
		Every center $a_{n,i,j}$  satisfies $\|x-a_{n,i,j}\|\geq r_n$, so the ball centred at $a_{n,i,j}$ of radius $\frac{2\rho_n}{c^i}<\frac{\rho_n}{2}\leq \frac{r_{n-1}}{2}\leq r_n$ does not contain $x$. Thus, for $i\geq 2$, 
	$$X\setminus \{x\} \subset U_i.$$
	In order to establish the condition \eqref{eq:pointwise log-Lip inverse} for a fixed $x \in X$ and Lebesgue-almost every $L\in E$, it is enough to show
		$$\lim_{i\rightarrow \infty} \Leb \big\{ L \in E : \underset{y \in X}{\exists}\,\,\underset{n\geq 1}{\exists}\,\, \|\phi_L(x) - \phi_L(y)\| < \frac{\rho_n}{c^i}  \text{ and }  \|x - y\| \geq r_n  \big\}=0$$
		Indeed, this implies that for almost every $L \in E$ there exists $i=i(L)\geq 2$, such that for every $y\in X$, $\|x - y\| \geq r_n$ implies $\|\phi_L(x) - \phi_L(y)\| \geq \frac{\rho_n}{c^i}$. As every $y\in X\setminus \{x\}$ is contained in some $Z_n$ (recall that $|X| \leq R$), this implies that for every $y\in X\setminus \{x\}$ (using monotonicity of $f$ on $(0,R]$), we obtain
		$$\|\phi_L(x) - \phi_L(y)\|\geq \frac{\rho_n}{c^i}=\frac{f(r_{n-1})}{c^i}\geq \frac{1}{c^i}f(\|x - y\|).$$
	
Let
\[
A_i=\big\{ L \in E : \underset{y \in X}{\exists}\,\,\underset{n\geq 1}{\exists}\,\, \|\phi_L(x) - \phi_L(y)\| < \frac{\rho_n}{c^i}  \text{ and }  \|x - y\| \geq r_n  \big\}.
\]
Clearly, $A_i$ is a Borel set. For $L\in A_i$ one may find $y\in X$ and $n\geq 1$ such that $\|\phi_L(x) - \phi_L(y)\| < \frac{\rho_n}{c^i}$ and $\|x - y\| \geq r_n$. Consequently, $y\in \bigcup_{m=1}^{n}Z_m$. Therefore, one may find a center $a_{m,i,j}$ such that $y\in B(a_{m,i,j}, \frac{\rho_m}{c^i})$. Note that
		$$\|\phi_L(x) - \phi_L (a_{m,i,j})\|\leq \|\phi_L(x) - \phi_L(y)\|+\|\phi_L(y) - \phi_L (a_{m,i,j})\|.$$
		By \eqref{eq:E} and \eqref{eq:H},
		$$\|\phi_L(y) - \phi_L (a_{m,i,j})\|\leq \|Ly - La_{m,i,j}\|+\|\phi(y) - \phi(a_{m,i,j})\|\leq \sqrt{N}\frac{\rho_m}{c^i}+H\frac{\rho_m}{c^i}. $$
	Thus,
	$$
	\|\phi_L(x) - \phi_L (a_{m,i,j})\|\leq \frac{\rho_n}{c^i}+(\sqrt{N}+H)\frac{\rho_m}{c^i}\leq Q\frac{\rho_m}{c^i},$$
	where $Q=1+\sqrt{N}+H$. We conclude
	$$A_i\subset \bigcup_{m=1}^{\infty}\bigcup_{j=1}^{\ell_{m,i}}A_{m,i,j}$$
	for
	\[A_{m,i,j}=\left\{ L \in E : \,\, \|\phi_L(x) - \phi(a_{m,i,j}) \|\leq  Q\frac{\rho_m}{c^i} \right\}.
	\]
	By Lemma~\ref{lem: key_ineq_linear} (recall that $a_{m,i,j} \in Z_m$),
	$$\Leb (A_{m,i,j})\leq C\frac{(Q\frac{\rho_m}{c^i})^k}{r_m^k}.$$
	Thus,
	$$\Leb (A_i)\leq\sum_{m=1}^{\infty}\sum_{j=1}^{\ell_{m,i}}\Leb (A_{m,i,j})\leq \sum_{m=1}^{\infty} K c^{i(k-\theta)}\Big(\frac{r_{m-1}}{\rho_m}\Big)^{k-\theta}C\frac{(Q\frac{\rho_m}{c^i})^k}{r_m^k}\leq \frac{KC(2Q)^k} {c^{i\theta}}\sum_{m=1}^{\infty}\left(\frac{f(r_{m-1})}{r_{m-1}}\right)^\theta.$$
	We notice
	$$\left(\frac{f(r_{m-1})}{r_{m-1}}\right)^\theta= \frac{1}{(\log_2(2^m))^{\eta}}=\frac{1}{m^\eta}.$$
	Thus,
	$$\Leb (A_i)\leq\frac{KC(2Q)^k} {c^{i\theta}}\sum_{m=1}^{\infty}\frac{1}{m^\eta}.$$
	As $\sum \limits_{m=1}^{\infty}\frac{1}{m^\eta} < \infty$, this implies $\lim \limits_{i\rightarrow \infty} \Leb (A_i)=0$, which ends the proof.
\end{proof}

\section{Measures with all almost-surely injective projections} \label{sec:every_direction}

\subsection{Proof of Theorem~\ref{thm:every direction a.s. injectivity}}
The measure $\mu$ will be defined in two steps. First, we define a measure $\nu$ on the unit interval by randomizing digits in dyadic expansions and then push $\nu$ to the graph of the function $x \mapsto x^2$. For the first step, it is convenient to work in the symbolic space $\{ 0,1 \}^{\N}$.

Partition $\N = \{1, 2, \ldots \}$ into blocks $B_n = \{2n-1, 2n \}$, $n \geq 1$ and denote $L_n = 2n-1$, $R_n = 2n$, so that the block $B_n$ consists of the left bit $L_n$ and the right bit $R_n$. For $\om \in \{0,1\}^{\N}$ we use the notation $\om = (\om_1, \om_2, \ldots)$. Set
\[ \Sigma = \{ \om \in \{0,1\}^\N : \om_{L_n} = 0 \text{ for every } n \geq 1 \}. \]
Fix $p \in (0,1/2)$. Define a probability measure $\mathbf{p}$ on $\{0,1\}^2$ as
\[ \mathbf{p} = p\delta_{(0,1)} + (1-p)\delta_{(0,0)}\]
and let $\PP$ be a probability measure on $\{0,1\}^{\N}$ given by 
\[ \PP = \mathbf{p}^{\otimes \N} \]
(we use below the identification $\{0,1\}^\N = \left(\{0,1\}^2\right)^\N$).
Clearly, $\PP(\Sigma) = 1$. Now we transport $\PP$ to the unit interval by setting
\[ \nu = \Pi\PP, \qquad X = \Pi(\Sigma), \]
where $\Pi$ is given by 
\begin{equation*}\Pi(\om) = \sum \limits_{j=1}^{\infty} \om_j 2^{-j}.
\end{equation*}
The last step is to lift $\nu$ to the graph of a non-linear function. Let $f\colon [0,1] \to [0,1]$ be given by $f(x) = x^2$ and $\Psi\colon [0,1] \to [0,1]^2$ by $\Psi(x) = (x, f(x))$. Finally, we set
\[ \mu = \Psi \nu \]
and claim that the measure $\mu$ satisfies the properties from Theorem~\ref{thm:every direction a.s. injectivity}. First, note that $\hdim \nu = \frac{- p\log p - (1-p)\log(1-p) }{\log 4}$ (see \cite[Example~1.5.6]{BP17}). As $\Psi$ is bi-Lipschitz, we have $\hdim \mu = \hdim \nu > 0$. For the injectivity part of Theorem~\ref{thm:every direction a.s. injectivity}, fix a non-zero linear map $L\colon \R^2 \to \R,\ L(x,y) = \alpha x + \beta y$. As $f$ is a bijection on $[0,1]$, we can assume that $\alpha \neq 0$ and $\beta \neq 0$, as otherwise $L$ is injective on the whole graph $\Psi([0,1])$ and the claim of Theorem~\ref{thm:every direction a.s. injectivity} follows trivially. Note that for $x,y \in [0,1]$ with $x \neq y$ we have
\begin{equation}\label{eq: Lx = Ly}
	L(x, f(x)) = L(y, f(y)) \iff \frac{f(y) - f(x)}{x - y} = \frac{\alpha}{\beta}   \iff x + y = - \frac{\alpha}{\beta}.
\end{equation}
Therefore, in order to show the injectivity of $L$, we need to study solutions of the equation $x+y = z$ for fixed $z$ and $x,y$ taken from $X$. We will do so in terms of the dyadic expansions.

Note that  every $z \in [0,1)$ has a unique dyadic expansion $z = \sum_{j=1}^\infty z_j 2^{-j}$ such that the sequence $(z_j)_{j=1}^\infty \in \{0,1\}^\N$ is not eventually equal to $1$ (we say that the dyadic expansions do not terminate with $1$'s). Moreover, the only points $z \in [0,1)$ which have a non-unique dyadic expansion are the dyadic rationals from $(0,1)$. For them, there are exactly two expansions, one terminating with $1$'s and one terminating with $0$'s. As $\Sigma$ does not contain sequences terminating with $1$'s, we see that $\Pi$ is injective on $\Sigma$. Therefore, for $x,y \in X$ we will write $x = \sum \limits_{j=1}^\infty x_j 2^{-j},\ y = \sum \limits_{j=1}^\infty y_j 2^{-j}$ for its unique dyadic expansion which does not terminate with $1$'s, so that $\Pi((x_1, x_2, \ldots)) = x,\ \Pi((y_1, y_2, \ldots)) = y$ and $(x_1, x_2, \ldots), (y_1, y_2, \ldots) \in \Sigma$. First, we need a technical lemma.

\begin{lem}\label{lem: x+y = z}
	Let $x,y,z \in [0,1)$ have dyadic expansions  
	$$x = \sum_{j=1}^{\infty} x_j 2^{-j},\quad y = \sum_{j=1}^{\infty} y_j 2^{-j},\quad z = \sum_{j=1}^{\infty} z_j 2^{-j}$$ 
	which do not terminate with $1$'s. Assume $x + y = z$. Then for every $k \in \N$, the condition $x_k = y_k = 0$ implies $\sum \limits_{j<k}z_j 2^{-j} = \sum \limits_{j<k} (x_j + y_j)2^{-j}$. Consequently, for every $k, m \in \N$ with $k < m$ such that $x_k = y_k = x_m = y_m=0$ we have $$\sum \limits_{k \leq j< m}z_j 2^{-j} = \sum \limits_{k < j < m} (x_j + y_j)2^{-j}.$$
\end{lem}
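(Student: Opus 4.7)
The plan is to split each of $x,y,z$ into a \emph{head} (the digits $j<k$) and a \emph{tail} (the digits $j\ge k$), and to show that the identity $x+y=z$ forces the heads to coincide numerically once the digits at position $k$ of $x$ and $y$ both vanish. Concretely, I would set
\[
A=\sum_{j<k}(x_j+y_j)2^{-j},\quad C=\sum_{j<k}z_j2^{-j},\quad B_x=\sum_{j\ge k}x_j2^{-j},\quad B_y=\sum_{j\ge k}y_j2^{-j},\quad D=\sum_{j\ge k}z_j2^{-j},
\]
so that the hypothesis rewrites as $A-C=D-B_x-B_y$, and the conclusion of the first part is $A=C$.

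The key observation is an arithmetic/size dichotomy. On the arithmetic side, both $A$ and $C$ are integer multiples of $2^{-(k-1)}$ (since $x_j+y_j\in\{0,1,2\}$ and $z_j\in\{0,1\}$), hence $A-C\in 2^{-(k-1)}\,\Z$. On the size side, I will bound the right-hand side $D-B_x-B_y$. Since $z$ has a dyadic expansion not terminating with $1$'s, $D<2^{-(k-1)}$, and obviously $D\ge 0$. Since $x_k=0$ and $(x_j)$ does not terminate with $1$'s, $B_x=\sum_{j>k}x_j2^{-j}<2^{-k}$; the same argument gives $B_y<2^{-k}$. Combining,
\[
D-B_x-B_y\in(-2^{-(k-1)},2^{-(k-1)}).
\]
The only integer multiple of $2^{-(k-1)}$ in that open interval is $0$, so $A=C$, which is the first assertion.

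For the second assertion I would apply the first assertion twice, once at the index $k$ and once at the index $m$, and subtract the resulting equalities to obtain
\[
\sum_{k\le j<m}z_j2^{-j}=\sum_{k\le j<m}(x_j+y_j)2^{-j}.
\]
Finally, because $x_k=y_k=0$ the $j=k$ term on the right vanishes, so the range of summation on the right may be replaced by $k<j<m$, giving exactly the stated identity.

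The main point requiring care is the strictness of the bounds on $B_x$, $B_y$ and $D$: without the assumption that the chosen expansions do not terminate with $1$'s the inequalities would only be weak, $A-C$ could equal $\pm 2^{-(k-1)}$, and the conclusion would fail. I expect this ``no termination with $1$'s'' bookkeeping to be the only subtle point; everything else is an integrality-versus-size argument.
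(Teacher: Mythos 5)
Your proof is correct and is essentially the same argument as the paper's: the paper also writes the two heads as $a\,2^{-(k-1)}$ and $b\,2^{-(k-1)}$ with $a,b\in\Z$, bounds the tails by $2^{-(k-1)}$ using the ``no terminating $1$'s'' hypothesis and $x_k=y_k=0$, and concludes $a=b$; the second assertion is then obtained by subtracting the $k$-equality from the $m$-equality, exactly as you do. The only cosmetic difference is that you bound $B_x$ and $B_y$ separately by $2^{-k}$ rather than bounding $B_x+B_y$ by $2^{-(k-1)}$ directly.
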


\begin{proof}
	We begin by proving the first assertion. Assume $x_k = y_k = 0$. Then
	\[ \sum \limits_{j<k}z_j 2^{-j} = a2^{-(k-1)}, \qquad \sum \limits_{j<k} (x_j + y_j)2^{-j} = b2^{-(k-1)}\]
	for some $a,b \in \N\cup \{ 0 \}$. As $x + y = z$, we have
	\begin{equation}\label{eq: x+y=z} a2^{-{(k-1)}} + \sum \limits_{j \geq k}z_j 2^{-j}   = b2^{-(k-1)} + \sum \limits_{j \geq k} (x_j + y_j)2^{-j}.
	\end{equation}
	Since $x_j, y_j, z_j$ are not eventually equal to $1$ and $x_k = y_k = 0$, we obtain
	\[ 0 \leq \sum \limits_{j \geq k}z_j 2^{-j} < 2^{-(k-1)}, \qquad 0 \leq \sum \limits_{j \geq k} (x_j + y_j)2^{-j} < 2^{-(k-1)}. \]
	Combining this with \eqref{eq: x+y=z} yields
	\[ |a - b|2^{-(k-1)} = \Big| \sum \limits_{j \geq k}z_j 2^{-j} - \sum \limits_{j \geq k} (x_j + y_j)2^{-j}  \Big| < 2^{-(k-1)}, \]
	so $a = b$ and, consequently, $\sum_{j<k}z_j 2^{-j} = \sum_{j<k} (x_j + y_j)2^{-j}$.
	
	Now, if $x_k = y_k = x_m = y_m=0$, then by the first assertion,
	\[
	\begin{split} \sum \limits_{k \leq j< m}z_j 2^{-j} & = \sum \limits_{ j< m}z_j 2^{-j} - \sum \limits_{ j< k}z_j 2^{-j} = \sum \limits_{ j< m}(x_j + y_j) 2^{-j} - \sum \limits_{ j< k}(x_j + y_j) 2^{-j} \\
		& =  \sum \limits_{k \leq j< m}(x_j + y_j) 2^{-j} = \sum \limits_{k < j< m}(x_j + y_j) 2^{-j}.
	\end{split}
	\]
\end{proof}
The above lemma and the structure of $\Sigma$ provide the following consequence.
\begin{lem}\label{lem:z forces x,y}
	Let $z = x + y$ for some $x,y \in X$, such that $z$ has a dyadic expansion $z = \sum_{j=1}^{\infty} z_j 2^{-j}$ which does not terminate with $1$'s. Then, denoting $z|_{B_n} = (z_{L_n}, z_{R_n})$, for every $n \geq 1$ we have the following.
	\begin{enumerate}[$($i$)$]
		\item\label{it:z Bn} $z|_{B_n} \in \{ (0,0), (0,1), (1,0)\}$.
		\item\label{it:z Bn 00} If $z|_{B_n} = (0,0)$, then $x_{R_n} = y_{R_n} = 0$.
		\item\label{it:z Bn 10} If $z|_{B_n} = (1,0)$, then $x_{R_n} = y_{R_n} = 1$.
		\item\label{it:z Bn 01} If $z|_{B_n} = (0,1)$, then exactly one of bits $x_{R_n}, y_{R_n}$ is equal to $1$.
	\end{enumerate}
\end{lem}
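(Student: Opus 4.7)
The plan is to derive all four assertions from a single identity obtained by applying Lemma~\ref{lem: x+y = z} block by block, with the special structure of $\Sigma$ doing most of the work.

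First, I would recall that since $x, y \in X = \Pi(\Sigma)$, their dyadic expansions satisfy $x_{L_n} = y_{L_n} = 0$ for every $n \geq 1$, because $\Pi$ is injective on $\Sigma$ and the only way to land on $x$ via a non-terminating-in-1's expansion is via its unique preimage in $\Sigma$. In particular, for every $n \geq 1$, the indices $k = L_n$ and $m = L_{n+1} = L_n + 2$ both satisfy $x_k = y_k = x_m = y_m = 0$.

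Next I would apply the second part of Lemma~\ref{lem: x+y = z} with these choices of $k$ and $m$. On the left-hand side, the sum over $L_n \leq j < L_{n+1}$ has exactly two terms, namely $z_{L_n} 2^{-L_n}$ and $z_{R_n} 2^{-R_n}$. On the right-hand side, the sum over $L_n < j < L_{n+1}$ has the single term $(x_{R_n} + y_{R_n}) 2^{-R_n}$, since $R_n = L_n + 1$ is the unique integer strictly between $L_n$ and $L_{n+1}$. Multiplying the resulting identity by $2^{R_n}$ gives the key relation
\[
2 z_{L_n} + z_{R_n} = x_{R_n} + y_{R_n}.
\]

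Finally, I would finish by case analysis on the right-hand side, which lies in $\{0, 1, 2\}$ since $x_{R_n}, y_{R_n} \in \{0,1\}$. If the sum equals $0$, then $z_{L_n} = z_{R_n} = 0$ and both $x_{R_n}, y_{R_n}$ vanish, giving \ref{it:z Bn 00}. If it equals $1$, then uniquely $z_{L_n} = 0$, $z_{R_n} = 1$, so $z|_{B_n} = (0,1)$ and exactly one of $x_{R_n}, y_{R_n}$ equals $1$, giving \ref{it:z Bn 01}. If it equals $2$, then uniquely $z_{L_n} = 1$, $z_{R_n} = 0$, so $z|_{B_n} = (1,0)$ and both $x_{R_n} = y_{R_n} = 1$, giving \ref{it:z Bn 10}. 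The pattern $(1,1)$ is excluded in all three cases, which establishes \ref{it:z Bn}. There is no real obstacle beyond confirming that the previous lemma can be applied at both endpoints of each block, which is immediate from the definition of $\Sigma$; the arithmetic of carries is then trivial because each block is isolated between two forced zeros.
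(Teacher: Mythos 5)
Your proposal is correct and takes essentially the same approach as the paper: both apply the second part of Lemma~\ref{lem: x+y = z} at the forced-zero indices $k=L_n$ and $m=L_{n+1}$ to obtain $\sum_{j\in B_n} z_j 2^{-j} = (x_{R_n}+y_{R_n})2^{-R_n}$, equivalently $2z_{L_n}+z_{R_n}=x_{R_n}+y_{R_n}$, and then read off the four assertions by case analysis. The paper merely leaves the final case analysis implicit, which you have written out in detail.
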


\begin{proof}
	As $x_{L_n} = y_{L_n} =  0$ for $n \geq 1$, we see from Lemma~\ref{lem: x+y = z} that
	\[ \sum_{j \in B_n} z_j 2^{-j} =  (x_{R_n} + y_{R_n}) 2^{-R_n}.\]
	The assertions of the lemma follow from this equality by recalling that $B_n = \{ R_n -1, R_n\} = \{ L_n, R_n\}$.
\end{proof}

Theorem~\ref{thm:every direction a.s. injectivity} is an immediate consequence of the following proposition.

\begin{prop}\label{prop:new}
	Let $L\colon \R^2 \to \R$ be a linear map of the form $L(x,y) = \alpha x + \beta y$ with $\alpha \neq 0$ and $\beta \neq 0$. Then there exists a Borel set $\Sigma_L \subset \Sigma$ with $\PP(\Sigma_L) = 0$, such that if $x,y \in X$, $x\neq y$ have dyadic expansions $x = \sum_{j=1}^{\infty} x_j 2^{-j}$, $y = \sum_{j=1}^{\infty} y_j 2^{-j}$ which do not terminate with $1$'s and satisfy $L(x, f(x)) = L(y, f(y))$, then at least one of the sequences $(x_1, x_2, \ldots), (y_1, y_2, \ldots)$ belongs to $\Sigma_L$.
\end{prop}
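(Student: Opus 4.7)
By \eqref{eq: Lx = Ly}, the hypothesis $L(x,f(x))=L(y,f(y))$ with $x\neq y$ is equivalent to $x+y=z_0$, where $z_0=-\alpha/\beta$ is a fixed constant. Since every $\omega\in\Sigma$ satisfies $\omega_{2n-1}=0$, we have $X\subset[0,1/3]$, so $\Sigma_L=\emptyset$ already works unless $z_0\in(0,2/3)$. Assuming this, the plan is to fix the dyadic expansion $z_0=\sum z_j 2^{-j}$ that does not terminate with $1$'s and to read off digit constraints on $x,y$ from Lemma~\ref{lem:z forces x,y}. If $z_0|_{B_n}=(1,1)$ for some $n$, the lemma rules out any admissible pair and $\Sigma_L=\emptyset$ works. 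Otherwise I partition
\[
N_0=\{n:z_0|_{B_n}=(0,0)\},\quad N_1=\{n:z_0|_{B_n}=(1,0)\},\quad N_*=\{n:z_0|_{B_n}=(0,1)\},
\]
and Lemma~\ref{lem:z forces x,y} forces every admissible pair to satisfy $x_{R_n}=y_{R_n}=0$ on $N_0$, $x_{R_n}=y_{R_n}=1$ on $N_1$, and $y_{R_n}=1-x_{R_n}$ on $N_*$; in particular $x\neq y$ requires $N_*\neq\emptyset$. Writing $T\colon\Sigma\to\Sigma$ for the involution that flips precisely the coordinates $\{R_n:n\in N_*\}$, this says $(y_j)=T((x_j))$.

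The construction of $\Sigma_L$ now splits according to the size of $N_0\cup N_1$. If $N_0\cup N_1$ is infinite, I take $\Sigma_L=\{\omega\in\Sigma:\omega_{R_n}=0\text{ on }N_0,\ \omega_{R_n}=1\text{ on }N_1\}$. Because the bits $\omega_{R_n}$ are i.i.d.\ $\Ber(p)$ under $\PP$ with $p\in(0,1/2)$, a direct computation gives $\PP(\Sigma_L)=(1-p)^{|N_0|}p^{|N_1|}=0$; by the forcing above, \emph{both} $(x_j)$ and $(y_j)$ lie in $\Sigma_L$ for every admissible pair. The delicate subcase is when $N_0\cup N_1$ is finite and $N_*$ cofinite: then only finitely many bits are constrained, the naive choice has positive $\PP$-measure, and I must instead exhibit a null \emph{transversal} meeting each two-element $T$-orbit. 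Exploiting $p\neq 1/2$, the strong law of large numbers applied to $(\omega_{R_n})_{n\ge 1}$ yields $\PP(S)=1$ for
\[
S=\Big\{\omega\in\Sigma:\lim_{n\to\infty}\tfrac{1}{n}\#\{k\le n:\omega_{R_k}=1\}=p\Big\},
\]
while the same law applied under $T_*\PP$ gives $\PP(T(S))=T_*\PP(S)=0$, because $T$ flips all but finitely many $R_k$-bits and therefore shifts the limiting frequency to $1-p$. Since $p\neq 1-p$, the sets $S$ and $T(S)$ are disjoint, so setting $\Sigma_L=\Sigma\setminus S$ gives a $\PP$-null Borel set with the required property: for any admissible pair either $(x_j)\in S$, whence $(y_j)=T((x_j))\in T(S)\subset\Sigma_L$, or else $(x_j)\notin S\subset\Sigma_L$ directly.

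The main obstacle is precisely this cofinite subcase, in which elementary bit-counting collapses because the set of admissible $(x_j)$ has positive $\PP$-measure. The resolution I propose is to separate the two endpoints of each $T$-orbit by a \emph{tail-measurable} statistic under which $\PP$ and $T_*\PP$ are mutually singular; the bias $p\neq 1/2$ turns the asymptotic density of $\omega_{R_n}$-ones into such a statistic, and the infinitude of $N_*$ ensures that $T$ genuinely moves this density from $p$ to $1-p$. All the remaining cases are handled directly by the product formula $(1-p)^{|N_0|}p^{|N_1|}=0$ or are vacuous.
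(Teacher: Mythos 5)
Your proof is correct and follows the same two-case decomposition as the paper's proof (splitting according to whether the set of blocks $n$ with $z|_{B_n} \in \{(0,0),(1,0)\}$ is infinite or finite), but the argument you give in the second case is genuinely different from the paper's and is worth comparing. In the first case, where $N_0\cup N_1$ is infinite, your argument is essentially verbatim the paper's: the forced bits define a cylinder-type set of $\PP$-measure zero containing both $(x_j)$ and $(y_j)$ of any admissible pair. In the second case, where $N_*$ is cofinite, the paper takes $\Sigma_L$ to be the set of $\omega$ whose upper density of $\omega_{R_n}=1$ is at least $1/2$ (null by Birkhoff since $p<1/2$), and argues by pigeonhole: Lemma~\ref{lem:z forces x,y} forces $x_{R_n}=1$ or $y_{R_n}=1$ for all large $n$, so the upper densities of $x$ and $y$ sum to at least $1$ and one exceeds $1/2$. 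You instead take $\Sigma_L$ to be the complement of the SLLN set $S=\{\text{frequency}=p\}$, and exploit the explicit involution $T$ (flipping $R_n$ for $n\in N_*$) sending $(x_j)$ to $(y_j)$; since $T$ flips a cofinite set of bits, it moves the limiting frequency from $p$ to $1-p$, so $S$ and $T(S)$ are disjoint and either $(x_j)$ or $(y_j)=T(x_j)$ lies outside $S$. Both routes rely crucially on the bias $p\neq 1/2$; the paper's threshold argument is slightly more elementary and does not require introducing $T$ explicitly, while your mutual-singularity viewpoint (separating the two ends of each $T$-orbit by a tail statistic) is more structural and clarifies why the biased-coin assumption is the exact hypothesis needed. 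Both are valid; a minor cosmetic point is that the proposition asks for $\Sigma_L\subset\Sigma$, which both constructions satisfy after intersecting with $\Sigma$ if necessary (harmless since $\PP(\Sigma)=1$).
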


Indeed, if we set $X_L = \Psi(X \setminus \Pi(\Sigma_L))$, then $X_L$ is Borel (by \cite[Theorem~5.1]{K95}), $\mu(X_L) = 1$ and $L$ is injective on $X_L$.

\begin{proof}[Proof of Proposition~\rm\ref{prop:new}]
	Assume that there exist $x,y \in X$ such that $L(x, f(x)) = L(y, f(y))$, as otherwise there is nothing to prove. Set $z = z(L) = - \frac{\alpha}{\beta}$. 	Note that $z \in [0,1)$ as $x_1 = y_1 = 0$ and the expansions of $x$ and $y$ do not terminate with $1$'s. Therefore, $z$ has a unique dyadic expansion $z = \sum_{j=1}^{\infty} z_j 2^{-j}$ which does not terminate with $1$'s.  We define the set $\Sigma_L$ in terms of the sequence $(z_j)_{j=1}^\infty$.
	
	First, assume that $z|_{B_n} \in \{(0,0), (1,0)\}$ for infinitely many $n \geq 1$. In this case we set
	\[ \Sigma_L = \big\{ \om \in \{0,1 \}^\N : \text{for each } n\geq 1,\  \om_{R_n} = 0 \text{ if } z|_{B_n} = (0,0) \text{ and } \om_{R_n} = 1 \text{ if } z|_{B_n} = (1,0) \big\}. \]
	Clearly, $\PP(\Sigma_L) = 0$. If $x,y \in X$, $x \neq y$ with dyadic expansions $x = \Pi(x_1, x_2, \ldots)$, $y = \Pi(y_1, y_2, \ldots)$ satisfy $L(x, f(x)) = L(y, f(y))$, then by \eqref{eq: Lx = Ly}, we have $x + y = z$, hence Lemma~\ref{lem:z forces x,y} implies $(x_1, x_2, \ldots) \in \Sigma_L$ and $(y_1, y_2, \ldots) \in \Sigma_L$.
	
	The remaining case is the one with $z|_{B_n} \in \{ (0,0), (1,0) \}$ only for finitely many $n \geq 1$. Then, by Lemma~\ref{lem:z forces x,y} we must have $z|_{B_n} = (0,1)$ for all $n$ large enough. Set
	\[ \Sigma_L = \Big\{ \om \in \{0,1 \}^\N : \limsup \limits_{N \to \infty} \frac{1}{N} \# \big\{ 1 \leq n \leq N : \om_{R_n} = 1 \big\} \geq \frac 1 2 \Big\}. \]
	As $p < 1/2$, the Birkhoff ergodic theorem gives $\PP(\Sigma_L) = 0$. Again, if $x,y \in X,\ x \neq y$ satisfy $L(x, f(x)) = L(y, f(y))$, then $x+y = z$, so by Lemma~\ref{lem:z forces x,y} and the assumption on $z$, we have
	\[
	\begin{split} 
		1 = &  \limsup \limits_{N \to \infty} \frac{1}{N} \# \left\{ 1 \leq n \leq N : z_{B_n} = (0,1) \right\}\\
		& \leq \limsup \limits_{N \to \infty} \frac{1}{N} \# \left\{ 1 \leq n \leq N : x_{R_n} = 1 \text{ or } y_{R_n} = 1 \right\} 	\\
		& \leq \limsup \limits_{N \to \infty} \frac{1}{N}  \# \left\{ 1 \leq n \leq N : x_{R_n} = 1 \right\} +  \limsup \limits_{N \to \infty} \frac{1}{N} \# \left\{ 1 \leq n \leq N : y_{R_n} = 1 \right\}.
	\end{split}\]
	Therefore, $(x_1, x_2, \ldots) \in \Sigma_L$ or $(y_1, y_2, \ldots) \in \Sigma_L$. This concludes the proof of the proposition.
\end{proof}

\subsection{Self-similar measures}

In this subsection we will prove Proposition~\ref{prop:self-sim}. We actually obtain a stronger statement. Let $\vphi_i\colon \R^N \to \R^N$, $i \in I$, be a finite collection of contractions and let $(p_i)_{i \in I}$ be a strictly positive probability vector. It is well-known (see \cite{HutchinsonFractals}) that there exists a unique Borel probability measure in $\R^N$ which is stationary for this system, i.e.~satisfies 
\[  \mu = \sum \limits_{i \in I} p_i \,\vphi_i \mu.\]
\begin{prop}\label{prop:ifs with translate}
Assume that the iterated function system given by $\{ \varphi_i : i\in I \}$ satisfies $\varphi_{i_2} = \varphi_{i_1} + t$ for $i_1, i_2 \in I$, $i_1 \ne i_2$ and some non-zero vector $t \in \R^N$. Let $\mu$ be the stationary measure for a probability vector $(p_i)_{i \in I}$.  Consider $V \in \Gr(k, N)$ contained in the orthogonal complement of $t$. Then $P_V$ is not injective on any set of full $\mu$-measure.
\end{prop}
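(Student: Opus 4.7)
The plan is to exploit the self-similarity relation $\mu = \sum_{i \in I} p_i \,\varphi_i \mu$ combined with the fact that the translation vector $t$ lives in $V^\perp$, so the two contractions $\varphi_{i_1}$ and $\varphi_{i_2}$ become identified after projecting to $V$. More precisely, for every $x \in \R^N$ we have
\[
P_V(\varphi_{i_2}(x)) - P_V(\varphi_{i_1}(x)) = P_V(\varphi_{i_2}(x) - \varphi_{i_1}(x)) = P_V(t) = 0,
\]
while $\varphi_{i_1}(x) \neq \varphi_{i_2}(x)$ since $t \neq 0$. Thus $P_V \circ \varphi_{i_1} = P_V \circ \varphi_{i_2}$ as maps on $\R^N$, and each point $x$ produces a pair of distinct points $\varphi_{i_1}(x), \varphi_{i_2}(x)$ with coinciding $P_V$-images.

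Next I would argue by contradiction: suppose $Y \subset \R^N$ is a Borel set with $\mu(Y) = 1$ on which $P_V$ is injective. Since every $p_i > 0$ and
\[
1 = \mu(Y) = \sum_{i \in I} p_i\, \mu(\varphi_i^{-1}(Y)),
\]
with each summand bounded by $p_i$, we must have $\mu(\varphi_i^{-1}(Y)) = 1$ for every $i \in I$. In particular the set
\[
Y' := \varphi_{i_1}^{-1}(Y) \cap \varphi_{i_2}^{-1}(Y)
\]
satisfies $\mu(Y') = 1$, so $Y'$ is non-empty (in fact has full $\mu$-measure).

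Picking any $x \in Y'$, the two points $\varphi_{i_1}(x)$ and $\varphi_{i_2}(x)$ both lie in $Y$, they are distinct (as $t \neq 0$), and they project to the same point of $V$ by the first paragraph. This contradicts the injectivity of $P_V|_Y$ and proves the proposition. There is no real technical obstacle here; the only thing to be careful about is extracting the identity $\mu(\varphi_i^{-1}(Y)) = 1$ for each $i$ from the stationarity equation, which is immediate because the convex combination $\sum p_i \mu(\varphi_i^{-1}(Y)) = 1$ with $p_i > 0$ forces each term to attain its maximum. To deduce Proposition~\ref{prop:self-sim} from Proposition~\ref{prop:ifs with translate}, one simply observes that for a homogeneous IFS $\varphi_i(x) = rO(x) + t_i$ with not all $\varphi_i$ equal, there exist indices with $t_{i_2} \neq t_{i_1}$, so $\varphi_{i_2} - \varphi_{i_1}$ is a non-zero constant vector $t$, and then any $k$-dimensional $V \subset t^\perp$ works.
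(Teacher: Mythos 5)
Your proof is correct and takes a genuinely more elementary route than the paper's. The paper proves the proposition via conditional measures: it sets $\nu = p_{i_1}\,\varphi_{i_1}\mu + p_{i_2}\,\varphi_{i_2}\mu$ (absolutely continuous with respect to $\mu$), shows that the conditional (sliced) measures of $\varphi_{i_2}\mu$ with respect to $P_V$ are translates by $t$ of those of $\varphi_{i_1}\mu$, and then invokes Lemma~\ref{lem:as injectivity Diracs} (almost-sure injectivity $\iff$ the slices are Dirac masses) to reach a contradiction, since a Dirac mass cannot equal its own nontrivial translate in a convex combination. Your argument bypasses the entire disintegration machinery: from $P_V(t)=0$ you get $P_V\circ\varphi_{i_1}=P_V\circ\varphi_{i_2}$, and from the stationarity equation $1=\mu(Y)=\sum_i p_i\,\mu(\varphi_i^{-1}(Y))$ with $p_i>0$ and each term in $[0,1]$ you deduce $\mu(\varphi_{i_1}^{-1}(Y))=\mu(\varphi_{i_2}^{-1}(Y))=1$, so picking any $x$ in the (nonempty, full-measure) intersection $\varphi_{i_1}^{-1}(Y)\cap\varphi_{i_2}^{-1}(Y)$ produces two distinct points $\varphi_{i_1}(x),\varphi_{i_2}(x)\in Y$ with the same image under $P_V$. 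This direct pigeonhole-style argument is shorter, requires no measure disintegration, and applies verbatim to any measurable $Y$ of full measure; the paper's approach, while heavier, is consistent with the conditional-measure framework developed elsewhere in the article (Lemma~\ref{lem:as injectivity Diracs}, the Slicing Theorem) and gives a structural picture of \emph{how} the slices fail to be Dirac. Both routes correctly derive Proposition~\ref{prop:self-sim} in the same way.
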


\begin{proof} We can assume $i_1 = 1$, $i_2 = 2$.
Let $\nu = p_1 \,\vphi_1 \mu + p_2\, \vphi_2 \mu$. As $\nu$ is absolutely continuous with respect to $\mu$, it is enough to prove the statement for the measure $\nu$. Consider the system of conditional measures $\{ \nu_{a} : a \in V \}$ of $\nu$ with respect to the map $P_V$. Let $\nu^{(1)} = \vphi_1 \mu,\ \nu^{(2)} = \vphi_2 \mu$, so that 
\begin{equation}\label{eq:nu sum}
\nu = p_1 \nu^{(1)} + p_2 \nu^{(2)}
\end{equation}
and let $\{ \nu^{(i)}_{a} : a \in V \},\ i =1,2$ be the system of conditional measures of $\nu^{(i)}$ with respect to $P_V$. Let
\[
S_t\colon \R^N \to \R^N,\qquad S_t(x) = x +t.
\]
A crucial observation is that
\begin{equation}\label{eq:shifted slices} \nu^{(2)}_a = S_{t}\nu^{(1)}_a \quad \text{for almost every } a \in V \text{ with respect to the measure } P_V\nu^{(1)} = P_V\nu^{(2)}.
\end{equation}
To verify that, note first that as $\vphi_2(x) = \vphi_1(x) + t$ for all $x \in \R^N$, we have $\nu^{(2)}=S_t\nu^{(1)}$. As $V$ is contained in the orthogonal complement of $t$, we have $P_V \circ S_t = P_V$, so
\begin{equation}\label{eq:equal projections}
	P_V \nu^{(2)} = P_V S_t \nu^{(1)} = P_V \nu^{(1)}.
\end{equation}
Therefore, to obtain \eqref{eq:shifted slices}, it is enough to check that $\{ S_t\nu_a^{(1)} : a \in V \}$ is a system of conditional measures of $\nu^{(2)}$ with respect to $P_V$. As $\nu_a^{(1)}(P_V^{-1}(a)) = 1$, we have $$S_t\nu_a^{(1)} (P_V^{-1}(a)) = \nu_a^{(1)} (S_t^{-1}P_V^{-1}(a)) = \nu_a^{(1)} (P_V^{-1}(a)) = 1,$$ hence we see from \eqref{eq:equal projections} that the conditions~(i)--(ii) of Definition~\ref{def:conditional measures} are satisfied. For the condition~(iii), we use \eqref{eq:equal projections} once more to obtain
\[ \nu^{(2)} (A) = \nu^{(1)}(S_t^{-1}A) = \int_{V} \nu^{(1)}_a(S_t^{-1} A)dP_V\nu^{(1)}(a) = \int_{V} S_t\nu^{(1)}_a(A)dP_V\nu^{(2)}(a).  \]
This shows that indeed $\{ S_t\nu_a^{(1)} : a \in V \}$ is a system of conditional measures for $\nu^{(2)}$ with respect to $P_V$, hence \eqref{eq:shifted slices} is established by its uniqueness. By \eqref{eq:nu sum}  and \eqref{eq:equal projections} we have
\begin{equation}\label{eq:nu sum slices} \nu_{a} = p_1 \nu^{(1)}_a + p_2 \nu^{(2)}_a \text{ for } P_V\nu\text{-almost every } a \in V.
\end{equation}

Now, if $P_V$ is injective on a set of full $\nu$-measure, then by Lemma~\ref{lem:as injectivity Diracs}, we could conclude that $\nu_a$ is a Dirac's delta for $P_V \nu$-almost every $a \in V$. By \eqref{eq:nu sum slices}, this would imply that  $\nu^{(1)}_a = \nu^{(2)}_a$ for $P_V\nu$-almost every $a \in V$. This would make a contradiction with \eqref{eq:shifted slices}, as $t$ is non-zero and \eqref{eq:equal projections} gives $P_V \nu = P_V \nu^{(1)} = P_V \nu^{(2)}$.
\end{proof}
Proposition~\ref{prop:self-sim} is an immediate consequence of Proposition~\ref{prop:ifs with translate}, as any homogeneous iterated function system consisting of similarities which are not all equal must contain two distinct translation vectors $t_i$.

\section{Examples}\label{sec:examples}

In this section we provide examples showing that Theorems~\ref{thm:injectivity-proj} and~\ref{thm:inverses main} cannot be improved in the following directions:
\begin{itemize}
	\item[(a)] in assertion~\ref{it:hdim proj}, $\hdim X$ cannot be replaced by $\hdim \mu$,
	\item[(b)] in assertion~\ref{it:bdim proj}, $\udim X$ cannot be replaced by $\hdim X$ and the range of $\alpha$ cannot be extended (in terms of $\udim X$) to any larger interval in $(0, 1)$, possibly up to the endpoint,
	\item[(c)] in assertion~\ref{it:adim proj}, $\adim X $ cannot be replaced by $\udim X$.
\end{itemize}

Considering the setup of Theorem~\ref{thm:inverses main}, we construct suitable examples for a measure $\mu$ of compact support $X \subset \R^N$ and an open set of maps $\mathcal{U} \subset \Lip(X, \R^k)$, $k < N$, containing all full-rank linear maps $L \in \Lin(\R^N, \R^k)$ restricted to $X$. As every prevalent set is dense (see \cite[Fact~2']{Prevalence92}), the examples show that Theorem~\ref{thm:inverses main} is sharp (in the sense of the items (a)--(c)) also as a result on embeddings via prevalent maps in the space of Lipschitz or $C^r$, $r=1,2,\ldots, \infty$, maps into $\R^k$ (see Remark~\ref{rem:prevalence lipschitz}).

As described in Remark~\ref{rem:linear to projection}, every full-rank linear map $L \in \Lin(\R^N, \R^k)$, $k \le N$, can be uniquely represented as $L = \Psi\circ P_V$, where $V = (\Ker L)^\perp$ and $\Psi \colon V \to \R^k$ is a linear isomorphism, such that Lebesgue-almost all linear maps $L \in \Lin(\R^N, \R^k)$ correspond to $\gamma_{k,N}$-almost all orthogonal projections $P_V$, $V \in \Gr(k, N)$, for the rotation-invariant measure $\gamma_{k,N}$, and open sets in $\Lin(\R^N, \R^k)$ correspond to open sets in $\Gr(k, N)$. It follows that the examples are valid also within the setup of Theorem~\ref{thm:injectivity-proj}.

The following proposition provides examples corresponding to the item~(a).

\begin{prop}\label{prop:ex ball}
For every $1 \le k < N$ there exists a finite Borel measure in $\R^N$ with a compact support $X$ and an open set $\mathcal{U} \subset \Lip(X, \R^k)$, containing all full-rank linear maps $L \in \Lin(\R^N, \R^k)$ restricted to $X$, such that $\hdim \mu = 0$ and no map $\phi \in \mathcal{U}$ is injective on a full $\mu$-measure set with continuous inverse.
\end{prop}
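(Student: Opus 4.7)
The plan is to let $X = [0,1]^N \subset \R^N$ and $\mu = \sum_{n \ge 1} 2^{-n}\delta_{x_n}$, where $\{x_n\}_{n \ge 1}$ is a countable dense subset of the open cube $(0,1)^N$. Then $\mu$ is purely atomic, so $\hdim \mu = 0$; since $\{x_n\}$ is dense in $X$, we have $\supp\mu = X$; and by construction every atom lies in $(0,1)^N$, so $\mu((0,1)^N) = 1$. For the open set I take
\[
\mathcal{U} = \bigcup_{L} \bigl\{\phi \in \Lip(X,\R^k) : \|\phi - L|_X\|_{\Lip} < \tfrac{1}{2}\sigma_{\min}(L)\bigr\},
\]
the union over all full-rank linear maps $L\colon \R^N \to \R^k$, where $\sigma_{\min}(L) = \min\{\|Lv\| : v \in (\ker L)^{\perp},\ \|v\| = 1\} > 0$. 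Then $\mathcal{U}$ is open in $\Lip(X,\R^k)$ and trivially contains $L|_X$ for every full-rank $L$.

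Suppose toward contradiction that some $\phi \in \mathcal{U}$ is injective on a Borel set $X_\phi \subset X$ with $\mu(X_\phi) = \mu(X)$ and continuous inverse. Since $\mu(X \setminus X_\phi) = 0$ and $\supp\mu = X$, the set $X_\phi$ is dense in $X$. I would first upgrade pointwise injectivity on $X_\phi$ to global injectivity within $X$: if $\phi(y) = \phi(x)$ for some $x \in X_\phi$ and $y \in X \setminus \{x\}$, density provides $y_n \to y$ with $y_n \in X_\phi$, so $\phi(y_n) \to \phi(x) \in \phi(X_\phi)$, and continuity of $(\phi|_{X_\phi})^{-1}$ at $\phi(x)$ forces $y_n = (\phi|_{X_\phi})^{-1}(\phi(y_n)) \to x$, contradicting $y_n \to y$. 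Hence $X_\phi \subset U_\phi := \{x \in X : \phi^{-1}(\phi(x)) \cap X = \{x\}\}$, and it suffices to show $\mu(U_\phi) < 1$.

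For the main step, write $\phi = L + \epsilon$ with $\Lip(\epsilon) < \tfrac{1}{2}\sigma_{\min}(L)$, and let $W = \ker L$, so $\dim W = N - k \ge 1$. Fix $x \in (0,1)^N$, pick $\eta > 0$ with $B(x, \eta) \subset (0,1)^N$, and choose any non-zero $v \in W$ with $\|v\| < \eta/2$. To produce a second preimage of $\phi(x)$, solve $\phi(x+v+u) = \phi(x)$ for $u \in W^\perp$ as the fixed point of
\[
F(u) = \bigl(L|_{W^\perp}\bigr)^{-1}\bigl(\epsilon(x) - \epsilon(x + v + u)\bigr),
\]
which is a contraction on $W^\perp$ of Lipschitz constant $\Lip(\epsilon)/\sigma_{\min}(L) < \tfrac{1}{2}$. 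A one-line estimate from the fixed-point equation gives $\|u\| < \|v\|$, so $y := x + v + u$ lies in $B(x,\eta) \subset X$; moreover $y \ne x$ because $v \ne 0$ and $v \perp u$, and $\phi(y) = \phi(x)$ by construction. Therefore $x \notin U_\phi$, and since every atom of $\mu$ lies in $(0,1)^N$, we obtain $\mu(U_\phi) = 0$, contradicting $\mu(X_\phi) = 1$.

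The main obstacle is calibrating the open set: the radius of each Lipschitz ball making up $\mathcal{U}$ must depend on $L$ through $\sigma_{\min}(L)$ so that the map $F$ contracts uniformly across the perturbation class; this is why $\mathcal{U}$ is realised as a union of varying-radius balls rather than a single neighbourhood. The dimension hypothesis $k < N$ enters precisely through $\dim \ker L \ge 1$, which supplies the direction $v$ used to construct the second preimage.
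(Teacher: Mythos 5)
Your proposal is correct, and it proves the proposition by a genuinely different mechanism than the paper. The paper takes $X$ to be the closed unit ball with $\mu$ an atomic measure on a countable dense set $Y$, writes $L = \Psi \circ P_V$, and produces a collision between two \emph{antipodal} $k$-dimensional slices $A_\pm = (V \pm w) \cap X$ whose images under the perturbed projection overlap; the discontinuity of the inverse is then exhibited directly at a specific point $\psi(x_+)$ by approximating the second collision point $x_-$ with atoms from the other slice's neighbourhood. You instead take $X = [0,1]^N$, reduce to the case $\ker L \ne \{0\}$ (which is exactly where $k < N$ enters), and for \emph{every} interior point $x$ use a Banach fixed-point argument along $W^\perp$ to solve $\phi(x + v + u) = \phi(x)$ with $v \in \ker L$ small and nonzero; this shows the set $U_\phi$ of globally injective points misses the open cube entirely, which, combined with your clean ``density upgrades local injectivity to global injectivity'' lemma, contradicts $\mu(X_\phi) = \mu(X)$. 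The trade-off: your argument is more quantitative (the radius $\tfrac12\sigma_{\min}(L)$ precisely calibrates the contraction ratio and must vary with $L$, exactly as you note) and constructs a second preimage near every atom rather than at one carefully chosen pair of points; the paper's slicing argument is more geometric and sidesteps the fixed-point machinery by exploiting the ball's symmetry. Both establish the same conclusion, and both ultimately hinge on the same soft fact — that a full-measure injectivity set must contain every atom and is therefore dense. One minor stylistic point: your estimate $\|u\| < \|v\|$ should be accompanied by the check that the closed ball $\overline{B}(0,\|v\|) \cap W^\perp$ is mapped into itself so the fixed point actually lies there, but the bound $\|F(u)\| \le \tfrac12(\|v\| + \|u\|) \le \|v\|$ is immediate from the same estimate, so this is a matter of phrasing rather than a gap.
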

\begin{proof}
Let $X = \overline{B_N(0,1)}\subset \R^N$ and let $Y$ be a countable dense subset of $X$. Consider a finite Borel measure $\mu$ on $Y$ such that $\mu(\{x\}) > 0$ for every $x \in Y$. Then the (topological) support of $\mu$ is equal to $X$ and $\hdim \mu = 0$. Let $L \in \Lin(\R^N, \R^k)$ be a full-rank linear map. As noted above, we can write $L = \Psi\circ P_V$, where $V = (\Ker L)^\perp$ and $\Psi \colon V \to \R^k$ is a linear isomorphism.

Choose $w \in V^\perp$ such that $\|w\| = \frac 1 2$ and let
\[
A_+ = (V +w) \cap X, \qquad A_- = (V -w) \cap X.
\]
Then $\dist(A_+, A_-) = 1$ and $B = P_V(A_+) = P_V(A_-)$ is a closed $k$-dimensional ball in $V$. Take $f \in \Lip(X, \R^k)$ such that $\|f\|_{\Lip} < \eps$ for a small $\eps > 0$, where the Lipschitz norm $\|\cdot\|_{\Lip}$ is defined in \eqref{eq:lip_norm}. Then $g = \Psi^{-1}\circ f \in \Lip(X, V) \simeq \Lip(X, \R^k)$ and $\|g\|_{\Lip}$ is arbitrarily small for sufficiently small $\eps$. Set $\psi = P_V + g\colon \R^N \to V$. As $P_V|_{A_\pm}$ is a translation onto $B$, the map $\psi|_{A_\pm}$ is bi-Lipschitz and 
\begin{equation}\label{eq:A_pm}
\psi(A_+) \cap \psi(A_-) \supset U \cap V
\end{equation}
for a non-empty open set $U \subset \R^N$, provided $\eps$ is chosen sufficiently small. Let $\widetilde A_+, \widetilde A_-$ be disjoint compact sets containing, respectively, some neighbourhoods of $A_+, A_-$. By \eqref{eq:A_pm} and the density of $Y$, there exists $x_+ \in \widetilde A_+ \cap Y$ such that $\psi(x_+) \in U \cap V$. Then, again by \eqref{eq:A_pm} and the density of $Y$, there exists $x_- \in \widetilde A_-$ such that $\psi(x_-) = \psi(x_+)$, and a sequence of points $x_n \in \widetilde A_- \cap Y$ with $x_n \to x_-$. 

Suppose $\psi$ is injective on a full $\mu$-measure subset of $X$. Then $\psi$ is injective on $Y$. However, we have $(\psi|_Y)^{-1}(\psi(x_+)) = x_+ \in \widetilde A_+$, $\psi(x_n) \to \psi(x_-) = \psi(x_+)$ and $(\psi|_Y)^{-1}(\psi(x_n)) = x_n \in \widetilde A_-$. As $\widetilde A_+, \widetilde A_-$ are disjoint compact sets, $(\psi|_Y)^{-1}$ cannot be continuous at $\psi(x_+)$.

Concluding, for every full-rank linear map $L \in \Lin(\R^N, \R^k)$ we have found $\eps = \eps(L) >0$ such that $\psi = P_V + g$ (where $L =\Psi\circ P_V$ and $g = \Psi^{-1}\circ f$) is not injective on a full $\mu$-measure set with continuous inverse for every $f \in \Lip(X, \R^k)$ with $\|f\|_{\Lip} < \eps$. Consequently, $\phi = L + f = \Psi\circ(P_V + g)$ is not injective on a full $\mu$-measure set with continuous inverse. This proves the proposition, as $\mathcal U$ can be taken as the union over $L$ of balls in $\Lip(X, \R^k)$ of radius $\eps(L)$, centred at $L$.

\end{proof}

Examples corresponding to the items (b)--(c) are inspired by \cite[Example~1]{FalconerHowroydProj} and \cite[Example~3.7]{SauerYorke97}, but differ in details and require different analysis than the ones presented in the cited works. The common construction in both cases (b)--(c) is as follows. Consider a pair of sequences $( r_i )_{i=1}^{\infty}, ( \ell_i )_{i=1}^{\infty}$ of positive real numbers, which decrease to $0$ with $\ell_i < r_i$ (in fact, $\ell_i$ will be chosen to decay much faster than $r_i$). Fix $N \in \N$, $1 \le k < N$ and let $e_1, \ldots, e_N$ be the standard orthonormal basis of $\R^N$. Denote 
\[
\Jk = \{ J \subset \{1, 2, \ldots, N\} : \# J = k + 1 \},
\]
and for $J \in \Jk$ let $S_J \subset \R^N$ be the $k$-dimensional unit sphere defined as 
\[
S_J = \big\{ (x_1, \ldots, x_N) \in \R^N : \sum \limits_{j \in J} x_j^2 = 1 \text{ and } \ x_j = 0 \text{ for } j \notin J\big\}.
\]
For each $J \in \Jk$ and $i \geq 1, $ let $X_{J,i} \subset \R^N$ be an $\ell_i$-separated set of maximal cardinality in 
\[
r_i S_J = \{ r_i x : x \in S_J \}.
\]
Then $X_{J,i}$ is an $\ell_i$-net  in $r_iS_J$ and 
\[
C_2\Big(\frac{r_i}{\ell_i}\Big)^k \le \# X_{J,i} \le C_1\Big(\frac{r_i}{\ell_i}\Big)^k
\]
for some constants $C_1, C_2 > 0$ independent of $i$ and $J$. Hence, $X_{J, i}$ can be seen as (approximately) uniformly distributed in $r_iS_J$. For $J \in \Jk$ we define a compact set $X_J \subset \R^N$ by
\[ X_J = \bigcup \limits_{i = 1}^\infty X_{J, i} \cup \{ 0 \}  \]
and set
\[ X = \bigcup \limits_{J \in \Jk} X_J.\]

To show the items (b)--(c), we prove the following proposition.

\begin{prop}\label{prop:ex spheres} For every $1 \le k < N$ and the set $X$ defined above, there exists an open set $\mathcal{U} \subset \Lip(X, \R^k)$ containing all full-rank linear maps $L \in \Lin(\R^N, \R^k)$ restricted to $X$, such that the following hold.
\begin{enumerate}[$($i$)$]
\item For $r_i = 2^{-i}$ and $\ell_i = 2^{-i^2}$, we have $\hdim X = 0$ and no map $\phi \in \mathcal{U}$ is injective on $X$ with a pointwise $\alpha$-H\"older inverse for any $\alpha > 0$.
\item\label{it:box holder bound} For every $s \in (0, k)$ and $t = \frac{k}{k-s}$, $r_i = 2^{-i}$, $\ell_i = 2^{-it}$, we have $\ldim X = \udim X = s$ and no map $\phi \in \mathcal{U}$ is injective on $X$ with a pointwise $\alpha$-H\"older inverse for $\alpha >  1 - \frac{\udim X}{k}$.
\end{enumerate}
In particular, if $\mu$ is a finite Borel measure supported on $X$ with $\mu(\{ x\}) > 0$ for every $x \in X$, in both cases we conclude that there is no full $\mu$-measure set on which $\phi$ is injective with a pointwise $\alpha$-H\"older inverse for $\alpha$ in the given range.
\end{prop}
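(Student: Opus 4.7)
The plan is to fix $\phi \in \mathcal{U}$ (assumed injective on $X$, otherwise there is nothing to prove) and prove that $\phi^{-1}$ fails to be pointwise $\alpha$-H\"older at the single point $\phi(0) \in \phi(X)$. The dimension claims are preliminary: $X$ is a countable union of finite sets, so $\hdim X = 0$ in case~(i). In case~(ii) the $\ell_i$-separated sets $X_{J,i}$ of cardinality $\sim (r_i/\ell_i)^k$ give $\ldim X \ge s$, while covering each $X_{J,j}$ by $\ell_i$-balls --- one per point when $\ell_j \ge \ell_i$, and a spatial cover of $r_j S_J$ by $\sim (r_j/\ell_i)^k$ patches otherwise --- yields the matching upper bound $\udim X \le s$ after routine summation with $t = k/(k-s)$.

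The key topological input is that for each $\phi \in \mathcal{U}$, each $J \in \Jk$ and each sufficiently small $r > 0$, the fibre $\phi^{-1}(\phi(0)) \cap r S_J$ is non-empty. For a full-rank linear map $L$ this is immediate, since $\dim(\ker L \cap V_J) \ge (k+1) + (N-k) - N = 1$, so $\ker L$ meets every sphere $r S_J$ at least in the antipodal pair $\pm r v_{L,J}$. For perturbations $\phi = L + g$ with $g$ of sufficiently small Lipschitz norm the fibre-intersection property persists --- via the implicit function theorem when $g$ is $C^1$ (using that $D\phi(0) = L$ has full rank), and more generally via a degree/homotopy argument on the boundary sphere (e.g.\ combining Borsuk--Ulam applied to the odd map $x \mapsto \phi(x) - \phi(-x)$ on $r S_J$ with the non-triviality of the local Brouwer degree of $L|_{V_J}$ at the origin). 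I take $\mathcal{U}$ to be the union, over all full-rank linear $L$, of the Lipschitz balls around $L$ on which this property holds; this is open and contains every linear restriction to $X$.

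Granting the topological fact, the final step is immediate. Pick $p_i \in r_i S_J$ with $\phi(p_i) = \phi(0)$ and approximate by $u_i \in X_{J,i} \subset X$ with $\|u_i - p_i\| \le \ell_i$, which is possible since $X_{J,i}$ is an $\ell_i$-net in $r_i S_J$. Then $\|u_i\| = r_i$ while
\[
 \|\phi(u_i) - \phi(0)\| \;=\; \|\phi(u_i) - \phi(p_i)\| \;\le\; \Lip(\phi)\,\ell_i.
\]
If $\phi^{-1}$ were pointwise $\alpha$-H\"older at $\phi(0)$ with some finite constant $c_0$, we would have $r_i \le c_0\,(\Lip(\phi)\,\ell_i)^\alpha$ for every $i$, i.e.\ $c_0 \gtrsim r_i/\ell_i^\alpha$. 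In case~(i), $r_i/\ell_i^\alpha = 2^{\alpha i^2 - i} \to \infty$ for every $\alpha > 0$; in case~(ii) with $\ell_i = 2^{-it}$, $r_i/\ell_i^\alpha = 2^{i(\alpha t - 1)} \to \infty$ precisely when $\alpha > 1/t = 1 - \udim X/k$. Either way $c_0 = \infty$, a contradiction. The measure addendum is automatic, as the hypothesis $\mu(\{x\}) > 0$ for every $x \in X$ forces any full $\mu$-measure subset of $X$ to equal $X$ itself.

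The main obstacle is the verification of the topological fibre-intersection property for genuinely Lipschitz (not only $C^1$) perturbations: one needs a careful degree-theoretic or homotopy argument to guarantee that $\phi^{-1}(\phi(0))$ still meets every small sphere $r S_J$ after perturbing, and this is what ultimately pins down the precise definition of the neighbourhood $\mathcal{U}$. Once this is established, the remainder of the argument assembles transparently from the $\ell_i$-net approximation and the quantitative divergence of $r_i/\ell_i^\alpha$ in each of the two regimes.
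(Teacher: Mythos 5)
Your overall plan — locate a point $y\in r_iS_J$ in the fibre over $\phi(0)$, approximate it by a net point $x_i\in X_{J,i}\subset X$ with $\|x_i-y\|\le\ell_i$, and derive $r_i\lesssim \ell_i^\alpha$ to contradict pointwise H\"older continuity — is exactly the paper's strategy, and your dimension computations and the final estimate chain are essentially correct (the $\ldim$ lower bound needs the little extra care of choosing the right $i$ for \emph{every} scale $r$, not just $r\approx\ell_i$, but that is routine). The gap is in the topological ingredient, and it is not the $C^1$ vs.\ Lipschitz regularity issue you flagged.

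You show only that $\dim(\ker L\cap V_J)\ge 1$ for \emph{every} $J\in\Jk$, where $V_J=\Span(S_J)$. But the stability of $\phi^{-1}(\phi(0))\cap rS_J\neq\emptyset$ under small Lipschitz (or even $C^1$) perturbations requires the transversality $\dim(\ker L\cap V_J)=1$ \emph{exactly}, and this can fail for some choices of $J$. Concretely, take $N=3$, $k=1$, $L(x)=x_3$, $J=\{1,2\}$: then $\ker L=V_J$ and the perturbation $g(x)=\eps\|(x_1,x_2)\|$ gives $\phi=L+g$ with $\phi|_{rS_J}\equiv\eps r\neq 0=\phi(0)$, so the fibre misses $rS_J$ for every $r>0$ no matter how small $\eps$ is. In the same situation your implicit function theorem argument also breaks: to apply IFT at a zero $x_0\in rS_J$ of $L$, you need $L$ restricted to the tangent space of $rS_J$ at $x_0$ to be an isomorphism onto $\R^k$, which forces $\dim(\ker L\cap V_J)=1$, not merely $\ge 1$. (The Borsuk--Ulam suggestion is a further red herring: it produces an antipodal coincidence $\phi(x)=\phi(-x)$ on the sphere, not a point of $rS_J$ in the fibre over the fixed value $\phi(0)$.)

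The missing step, which is precisely how the paper handles this, is to \emph{choose $J=J(L)$ depending on $L$} so that $\Span(V^\perp,\{e_j:j\in J\})=\R^N$ for $V=(\Ker L)^\perp$; such a $J$ always exists, and it forces $\dim(V^\perp\cap V_J)=1$. The paper then picks $y_0\in V^\perp\cap S_J$, observes that the tangent space $W$ of $S_J$ at $y_0$ satisfies $W\cap V^\perp=\{0\}$ so that $P_V|_W$ is a linear isomorphism, and uses this bi-Lipschitz control on $S_J$ near $y_0$ to see that the image $\psi(rS_J\cap B(ry_0,r\delta))$ contains a ball of radius $\sim r$ around $\psi(ry_0)$, which captures $\psi(0)$ once $\Lip(g)$ is small. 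With the correct $J$, a degree argument on the upper hemisphere of $rS_J$ relative to the hyperplane $(\ker L\cap V_J)^\perp\cap V_J$ also works and would recover the same conclusion, so your overall route is salvageable; but as written the proposal does not address the dependence of $J$ on $L$, and the fibre-intersection claim is false for arbitrary $J$.
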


Note that as Proposition~\ref{prop:ex spheres}\ref{it:box holder bound} provides a compact set $X$ with $\udim X < k$ and an open set of Lipschitz maps (containing all full-rank linear maps) without a pointwise $\alpha$-H\"older inverse on $X$ for some $\alpha \in (0,1)$, we conclude that in the assertion~(iii) of Theorems~\ref{thm:injectivity-proj} and~\ref{thm:inverses main}, the Assouad dimension cannot be replaced by the upper box-counting dimension.

The proof of Proposition~\ref{prop:ex spheres} is preceded by two lemmas.
\begin{lem}\label{lem:sphere dim}
For any choice of the sequences $r_i \searrow 0$ and $\ell_i < r_i$, the set $X$ satisfies $\hdim X = 0$. For $r_i = 2^{-i}$ and $\ell_i = 2^{-ti}$, $t>1$, we have $\ldim X = \udim X = k\frac{t-1}{t}$.
\end{lem}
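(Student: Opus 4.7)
The first assertion is immediate. Since $X_{J,i}$ is $\ell_i$-separated inside the compact set $r_i S_J$, it is finite; therefore $X_J = \{0\} \cup \bigcup_i X_{J,i}$ is a countable union of finite sets, and $X = \bigcup_{J \in \Jk} X_J$ is a finite union of countable sets, hence countable. Every countable subset of $\R^N$ has Hausdorff dimension zero.

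For the box-counting claim, the plan is to show $N(X, \delta) \asymp \delta^{-k(t-1)/t}$ as $\delta \to 0$, from which $\ldim X = \udim X = k(t-1)/t$ follows by the definition of the box-counting dimensions. Since $\#\Jk$ depends only on $N$ and $k$, it suffices to estimate $N(X_J, \delta)$ for a fixed $J \in \Jk$. The key step is to partition the indices $i$ according to the position of $\delta$ relative to the scales $r_i$ and $\ell_i$: with $r_i = 2^{-i}$, $\ell_i = 2^{-ti}$, set $I_1 = I_1(\delta) \approx (-\log_2\delta)/t$ and $I_2 = I_2(\delta) \approx -\log_2\delta$, so that $\ell_i \ge \delta$ iff $i \le I_1$ and $r_i \ge \delta$ iff $i \le I_2$.

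Three regimes then arise. In regime (a), $i \le I_1$: the points of $X_{J,i}$ are $\ell_i$-separated with $\ell_i \ge \delta$, hence $N(X_{J,i}, \delta) \asymp \#X_{J,i} \asymp (r_i/\ell_i)^k = 2^{k(t-1)i}$; summing this geometric series yields a contribution of order $(r_{I_1}/\ell_{I_1})^k \asymp \delta^{-k(t-1)/t}$. In regime (b), $I_1 < i \le I_2$ so $\ell_i < \delta \le r_i$: I would show $N(X_{J,i}, \delta) \asymp (r_i/\delta)^k$, with the upper bound following from covering the ambient sphere $r_i S_J$ by $\delta$-balls, and the lower bound from the fact that a single $\delta$-ball meets $r_i S_J$ in a set of $k$-dimensional diameter $O(\delta)$ and therefore contains at most $O((\delta/\ell_i)^k)$ of the $\ell_i$-separated points of $X_{J,i}$; summing $\delta^{-k}\sum 2^{-ki}$ over this range is dominated by the term at $i = I_1 + 1$ and again gives order $\delta^{-k(t-1)/t}$. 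In regime (c), $i > I_2$ so $r_i < \delta$ and $\bigcup_{i > I_2} X_{J,i} \cup \{0\} \subset B(0, \delta)$ contributes $O(1)$.

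The main technical point I expect to require care is the lower bound in regime (b): one needs to confirm that the $\ell_i$-net on the $k$-dimensional sphere $r_iS_J$ cannot be covered by significantly fewer $\delta$-balls than the sphere itself. This relies on the fact that at scale $\delta \le r_i$ the sphere is approximately flat in a $k$-dimensional sense, combined with the standard packing bound for $\ell_i$-separated points in a ball of radius $\delta$ inside $\R^k$. Once this is verified, summing the three contributions produces matching upper and lower bounds $N(X, \delta) \asymp \delta^{-k(t-1)/t}$, which yields the claimed equality of the two box-counting dimensions.
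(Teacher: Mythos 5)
Your argument is correct and follows essentially the same plan as the paper: split by index $i$, use $\#X_{J,i}\lesssim (r_i/\ell_i)^k$ on small $i$ and the covering number of the ambient sphere $\lesssim (r_i/\delta)^k$ on intermediate $i$, absorb the tail into one ball, and match with a lower bound at $i\approx(-\log_2\delta)/t$. The one place you diverge is the lower bound: you propose a regime-(b) cap-packing estimate (at most $O((\delta/\ell_i)^k)$ of the $\ell_i$-separated points in a $\delta$-ball, hence $\gtrsim(r_i/\delta)^k$ balls needed), which is correct but needs the geometric observation that a $\delta$-cap of the $k$-sphere has measure $\lesssim\delta^k$ while disjoint $(\ell_i/2)$-balls carve out $\gtrsim\ell_i^k$ each. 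The paper avoids this by simply choosing the single index $i=\lfloor(m-2)/t\rfloor$ (just inside your regime (a)), where $\ell_i\ge 2r$, so $X_{J,i}$ is $2r$-separated and trivially $N(X_J,r)\ge\#X_{J,i}\gtrsim 2^{km(t-1)/t}$ without any curvature considerations — a slightly cleaner route to the same bound.
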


\begin{proof}
The first statement is immediate as $X$ is countable. For the second one,  take $J \in \Jk$, $r_i = 2^{-i}$, $\ell_i = 2^{-ti}$, $t>1$, and note that for a small $r>0$ one has the following bounds on the covering number $N(X_{J,i}, r)$, coming, respectively, from covering each point of $X_{J, i}$ separately, and covering the whole sphere $r_i S_J$:
\begin{align}
\label{eq: first cover}
N(X_{J,i}, r) &\leq \# X_{J,i} \leq C_1 \Big(\frac{r_i}{\ell_i}\Big)^k = C_1 2^{k(t-1)i},\\
\label{eq: second cover} N(X_{J,i}, r) &\leq C\Big(\frac{r_i}{r}\Big)^k = \frac{C}{r^k 2^{ki}},
\end{align}
with a constant $C>0$ independent of $i$ and $J$.

Let $m = m(r)$ be such that $2^{-m} \leq r < 2^{-m+1}$. Note that $X_{J,i} \subset B(0, r)$ for $i > m$. Hence,
applying the bounds~\eqref{eq: first cover} for $i \le \lceil m / t \rceil$ and~\eqref{eq: second cover} for $i > \lceil m / t \rceil$ we obtain
\[
N( X_J, r) \leq 1 + C_1\sum_{i=1}^{\lceil m / t \rceil}  2^{k(t-1)i} + C\sum_{i=\lceil m / t \rceil +1}^m  2^{k(m-i)} \leq C'_1 2^{km (t-1) / t}
\]
for some constant $C'_1 > 0$ and every sufficiently small $r > 0$.
This gives
\[ \udim X_J \leq \lim_{m\to\infty} \frac{\log C'_1 + (km(t-1)/t) \log 2}{(m -1)\log 2} = k\frac{t-1}{t}. \]
On the other hand, for $i = \lfloor (m-2) / t \rfloor$ we have $2r \le \ell_i$, so the set $X_{J,\lfloor (m-2) / t \rfloor}$ is $2r$-separated, and hence
\begin{align*}
N( X_J, r) &\ge N(X_{J,\lfloor (m-2) / t \rfloor}, r) \ge \#X_{J,\lfloor (m-2) / t \rfloor} \ge  C_2\bigg(\frac{r_{\lfloor (m-2) / t \rfloor}}{\ell_{\lfloor (m-2) / t \rfloor}}\bigg)^k\\
&= C_2 2^{k(t-1)\lfloor (m-2) / t \rfloor} \ge C'_2 2^{km (t-1) / t}
\end{align*}
for some constant $C'_2 > 0$ and sufficiently large $m$. Thus,
\[
\ldim X_J \geq \lim_{m\to\infty} \frac{\log C'_2 + (km(t-1)/t) \log 2}{(m -1)\log 2} = k\frac{t-1}{t}.
\]
As $X$ is a finite union of $X_J, J \in \Jk$, we have $\ldim X = \udim X = k\frac{t-1}{t}$.
\end{proof}

\begin{lem}\label{lem:petrurbed image}
For every full-rank linear map $L \in \Lin(\R^N, \R^k)$, there exist $J=J(L) \in \Jk$ and $\eps = \eps(L) > 0$ such that for every $f \in \Lip(X, \R^k)$ with $\|f \|_{\Lip} < \eps$ and every $0 < r \leq 1$ there exists $y \in rS_J$ with $(L + f)(y) = (L + f)(0)$.
\end{lem}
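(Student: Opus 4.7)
The strategy is to restrict attention to a well-chosen $(k+1)$-dimensional coordinate subspace $V_J$ on which $L$ remains surjective, identify a hemisphere of $rS_J$ that maps homeomorphically under $L$ onto a filled ellipsoid in $\R^k$, and use Brouwer's fixed point theorem on that ellipsoid to absorb the small Lipschitz perturbation $f$.

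Since $L$ has rank $k$, there exist $k$ linearly independent columns among $\{Le_1,\ldots,Le_N\}$; adjoining any further column yields $J \in \Jk$ with $\rank(L|_{V_J}) = k$, where $V_J := \Span\{e_j : j \in J\}$. Let $K := \Ker(L|_{V_J})$ be the resulting one-dimensional kernel and fix a unit vector $v \in K$. Setting $W := v^{\perp} \cap V_J$, the restriction $L|_W : W \to \R^k$ is a linear isomorphism, and we let $c := 1/\|(L|_W)^{-1}\| > 0$. For each $r > 0$, the filled ellipsoid $E_r := L|_W(\overline{B_W(0,r)})$ contains the Euclidean ball $\overline{B_k(0,cr)}$. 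Every $z \in E_r$ corresponds to a unique $w = (L|_W)^{-1}(z) \in W$ with $\|w\| \le r$, and the map
\[
\Psi\colon E_r \to rS_J, \qquad \Psi(z) := \sqrt{r^2 - \|w\|^2}\, v + w,
\]
is a homeomorphism onto the upper hemisphere $\{y \in rS_J : \langle y, v\rangle \ge 0\}$, satisfying $L(\Psi(z)) = z$ and $\|\Psi(z)\| = r$ (the latter because $v \perp w$).

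Extending $f$ to a Lipschitz map on $\R^N$ via Kirszbraun's theorem (preserving the Lipschitz constant), consider the continuous map
\[
T\colon E_r \to \R^k, \qquad T(z) := f(0) - f(\Psi(z)).
\]
Since $\|T(z)\| \le \Lip(f)\,\|\Psi(z)\| < \eps r$, choosing $\eps(L) := c$ forces $T(E_r) \subset \overline{B_k(0,cr)} \subset E_r$. Brouwer's fixed point theorem then produces $z^* \in E_r$ with $T(z^*) = z^*$, and $y^* := \Psi(z^*) \in rS_J$ satisfies
\[
(L+f)(y^*) - (L+f)(0) = L(y^*) + f(y^*) - f(0) = z^* - T(z^*) = 0,
\]
as required.

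The only substantive input is the linear-algebraic observation that $E_r$ contains a Euclidean ball of radius $cr$ with $c$ independent of $r$; this is immediate from $L|_W$ being an isomorphism and is precisely what allows $\eps(L)$ to be chosen uniformly in $r$. Once this is in place, everything reduces to a direct Brouwer argument on the hemisphere parametrization, and I expect no further obstacle.
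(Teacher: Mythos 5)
Your proof is correct, and it takes a genuinely different route from the paper's. Both arguments begin the same way: your condition $\rank(L|_{V_J})=k$ is equivalent to the paper's requirement $\Span(\Ker L,\{e_j:j\in J\})=\R^N$, your unit vector $v$ is the paper's $y_0\in\Ker L\cap S_J$, and your $W=v^\perp\cap V_J$ is the same. Beyond that the methods diverge. The paper decomposes $L=\Psi\circ P_V$ and proves that $P_V$ is a bi-Lipschitz homeomorphism on a small spherical cap $S_J\cap B(y_0,\delta)$ via a tangent-space estimate; it then rescales by $r$ and argues that the bi-Lipschitz image of the cap contains a ball of radius proportional to $r\delta$ around $\psi(ry_0)$, which captures $\psi(0)$ when $\eps<c\delta$. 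You instead parametrize the \emph{entire} hemisphere $\{y\in rS_J:\langle y,v\rangle\ge 0\}$ exactly by the filled ellipsoid $E_r=L|_W(\overline{B_W(0,r)})$ and apply Brouwer directly to the perturbation map $z\mapsto f(0)-f(\Psi(z))$. Your approach buys several things: it avoids the auxiliary $\delta$ and the tangent-space argument altogether; it yields an explicit threshold $\eps(L)=1/\|(L|_W)^{-1}\|$ rather than one implicitly tied to bi-Lipschitz constants on a shrinking cap; and it isolates the topological input as a single clean Brouwer step. You also explicitly invoke Kirszbraun to extend $f$ off $X$ --- a point the paper glosses over, even though $y\in rS_J$ typically lies outside the countable set $X$, so both proofs in fact need such an extension; your treatment is the more careful one. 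The paper's version is slightly more economical in staying entirely within the orthogonal-projection framework used elsewhere in Section~4, but the two proofs rest on the same underlying degree-theoretic fact.
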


\begin{proof}
Fix a full-rank linear map $L \in \Lin(\R^N, \R^k)$. As previously, write $L = \Psi\circ P_V$, where $V = (\Ker L)^\perp$ and $\Psi \colon V \to \R^k$ is a linear isomorphism. Since $\{e_1, e_2, \ldots, e_N \}$ is a linear basis of $\R^N$ and $\dim V^\perp = N - k$, there exists $J \in \Jk$ such that $\Span(V^\perp, \{e_j : j \in J\}) = \R^N$ (actually, we can obtain this with a subset of $J$ of cardinality $k$, but it is crucial for the construction that $\# J = k+1 $). Note that $\Span(S_J) = \Span(\{e_j : j \in J\})$ and take any $y_0 \in V^\perp \cap S_J$. Such $y_0$ exists, as otherwise $V^\perp \cap \Span(S_J) = \{ 0 \}$, but this is impossible since $\dim V^\perp= N - k$ and $\dim(\Span(S_J)) = k + 1$. Let now $W$ be the orthogonal complement of $y_0$ in $\Span(S_J)$, so that $W \subset \Span(S_J)$ and $\dim W = k$. Note that $W \cap V^\perp = \{ 0\}$, as $\dim W + \dim V^\perp = N$ and $\Span(W, V^\perp) = \Span(W, y_0, V^\perp) = \Span(S_J, V^\perp) = \R^N$. As $y_0 \in V^\perp = \Ker P_V$, we have $P_V (y_0) = 0$.  Since $W \cap \Ker P_V = W \cap V^\perp = \{ 0 \}$, we see that $P_V$ is injective on $W$. As $\dim W = k =\dim V$, we conclude that $P_V$ is a linear isomorphism between $W$ and $V$ and hence it is a bi-Lipschitz homeomorphism.

Now we show that
\begin{equation}\label{eq:claim}
\text{$P_V$ is a bi-Lipschitz homeomorphism from $S_J \cap B(y_0, \delta)$ onto its image for some $\delta > 0$.}
\end{equation}
To prove \eqref{eq:claim}, we first note that $W$ is the tangent space to $S_J$ at $y_0$, as $W$ is the orthogonal complement of $y_0$ in $\Span(S_J)$. Therefore, for any sufficiently small $\eta > 0$ there exists $\delta>0$ such that the map $S_J \cap B(y_0, \delta) \ni x \mapsto P_W(x - y_0) \in W$ is a  bi-Lipschitz homeomorphism onto its image and such that $h \colon  S_J \cap B(y_0, \delta)  \to \R^N$ given by $h(x) = x - y_0 - P_W(x - y_0)$ satisfies $\|h\|_{\Lip} < \eta$. As $x = y_0 + P_W(x-y_0) + h(x)$, we have 
\begin{equation}\label{eq:proj decomp} P_V (x) = P_V(P_W(x - y_0)) + P_V (h(x)).
\end{equation}
for $x \in S_J \cap B(y_0,\delta)$. 
As $x \mapsto P_W(x - y_0)$ is bi-Lipschitz as a map from $S_J \cap B(y_0, \delta)$ to $W$ and $P_V$ is bi-Lipschitz on $W$, we have that $x \mapsto P_V(P_W(x - y_0))$ is bi-Lipschitz on $S_J \cap B(y_0, \delta)$. On the other hand, $\|P_V \circ h\|_{\Lip} < \eta$, so we see from \eqref{eq:proj decomp} that $P_V$ is bi-Lipschitz on $S_J \cap B(y_0, \delta)$ if $\eta$ is small enough (compared to the Lipschitz constants of $x \mapsto P_V(P_W(x - y_0))$ and its inverse on $S_J \cap B(y_0, \delta)$, which are uniformly bounded away from $0$ and $\infty$ for all small $\delta$). Such $\eta$ exists for $\delta$ small enough, which shows \eqref{eq:claim}.

Take $y_0 \in S_J$ and $\delta > 0$ as in \eqref{eq:claim}. Then, by a homothetic change of coordinates, $P_V$ is a bi-Lipschitz homeomorphism from $rS_J \cap B(ry_0, r\delta)$ onto its image for every $0 < r \leq 1$, with Lipschitz constants independent of $r$. As previously, note that if $f \in \Lip(X, \R^k)$ satisfies $\|f\|_{\Lip} < \eps$ for a small $\eps > 0$, then $g = \Psi^{-1}\circ f \in \Lip(X, V) \simeq \Lip(X, \R^k)$ and $\|g\|_{\Lip}$ is arbitrarily small for sufficiently small $\eps$. 
Consequently, setting $\psi = P_V + g$, we see that there exists $\eps = \eps(L) >0$ such that if $\|f\|_{\Lip} < \eps$, then $\psi$ is a bi-Lipschitz homeomorphism from $rS_J \cap B(ry_0, r\delta)$ onto its image with uniform Lipschitz constants, i.e. there exist $c,C > 0$ such that
\begin{equation}\label{eq:bi-Lip}  c \|x-y\| \leq  \| \psi(x) - \psi(y) \| \leq C \|x-y\|
\end{equation}
for every $0 < r \leq 1$ and $x,y \in rS_J \cap B(ry_0, r\delta)$.
Our goal is to show that $\psi(0) = g(0)$ belongs to the image $\psi(rS_J \cap B(ry_0, r\delta))$. To see this, note that as $\psi$ is a bi-Lipschitz homeomorphism of the $k$-dimensional ball $rS_J \cap B(ry_0, r\delta)$,  \eqref{eq:bi-Lip} implies that $\psi(rS_J \cap B(ry_0, r\delta))$ contains a ball in $\R^k$ of radius $cr\delta$ centred at $\psi(r y_0) = g(ry_0)$. As $\|g(ry_0) - g(0)\| \leq \eps r$, we see that $g(0) \in \psi(rS_J \cap B(ry_0, r\delta))$ provided that $\eps$ is small enough to satisfy $\eps < c\delta$.
Hence, $\psi(y) = \psi(0)$ for some $y \in rS_J$. As $\psi = P_V + g$, $L = \Psi\circ P_V$ and $f = \Psi\circ g$, this implies $(L + f)(y) = (L + f)(0)$, which ends the proof of the lemma.
\end{proof}

\begin{proof}[Proof of Proposition~\rm\ref{prop:ex spheres}]
As previously, we set $\mathcal{U}$ to be the union over all full-rank linear maps $L \in \Lin(\R^N, \R^k)$ of balls in $\Lip(X, \R^k)$ with radius $\eps(L)$, centred at $L$, where $\eps(L)$ is as in Lemma~\ref{lem:petrurbed image}. Consider $\phi = L + f \in \mathcal{U}$, where $L \in \Lin(\R^N, R^k)$ and $\|f\|_{\Lip} < \eps(L)$. By Lemma~\ref{lem:petrurbed image}, for $i \geq 1$ and $J=J(L)$, we can find $y_i \in r_i S_J$ with $\psi(y_i) = \psi(0)$. Let $x_i \in X_{J, i}$ be such that $\|x_i - y_i \| \leq \ell_i$ (recall that $X_{J, i}$ is an $\ell_i$-net in $r_iS_J$). Suppose that $\phi$ is injective on $X$ with a pointwise $\alpha$-H\"older inverse. Then there exists $M > 0$ such that for every $i \geq 1$,
\[
\begin{split}r_i &= \|x _i \| \leq M \|\phi(x_i) - \phi(0)\|^{\alpha} = M \|\phi(x_i) - \phi(y_i)\|^{\alpha} \\
&\leq M (\|L(x_i) - L(y_i)\| + \|f(x_i) - f(y_i)\|)^{\alpha}\\
&\leq M(\|L\|+\eps)^\alpha \|x_i - y_i\|^\alpha \leq M(\|L\|+\eps)^\alpha \ell_i^\alpha,
\end{split} \]
so
\[ \frac{r_i}{\ell_i^{\alpha}} \leq M (\|L\|+\eps)^\alpha. \]
This however cannot be the case if $\lim_{i \to \infty} \frac{r_i}{\ell_i^{\alpha}} = \infty$. Therefore, for $r_i = 2^{-i}$, $\ell_i = 2^{-i^2}$ we conclude that $\phi$ is not injective on $X$ with a pointwise $\alpha$-H\"older inverse for any $\alpha > 0$. This proves the first assertion of the proposition. For the choice $r_i = 2^{-i}$, $\ell_i = 2^{-ti}$, $t > 1$, we obtain that $\phi$ is not injective on $X$ with a pointwise $\alpha$-H\"older inverse for any $\alpha > 1/t$. Setting $t = \frac{k}{k-s}$ for $s \in (0,k)$, we obtain $\frac 1 t = 1 - \frac{s}{k}$ and $\ldim X = \udim X = k\frac{t-1}{t} = s$ by Lemma~\ref{lem:sphere dim}. This finishes the proof.
\end{proof}

\section*{Acknowledgements}
Some parts of Theorem~\ref{thm:inverses main} are inspired by an unpublished note \cite{KSnote} by Istv\'an Kolossv\'ary and K\'aroly Simon. We thank them for sharing it with us. We are grateful to Pertti Mattila for reading an early version of this manuscript and valuable remarks. We thank Bal\'azs B\'ar\'any and Boris Solomyak for useful discussions. KB and A\'S were partially supported by the National Science Centre (Poland) grant 2019/33/N/ST1/01882. YG was partially supported by the National Science Centre (Poland) grant 2020/39/B/ST1/02329.

\bibliographystyle{alpha}
\bibliography{universal_bib}

\newcommand{\etalchar}[1]{$^{#1}$}
\def\cprime{$'$} \def\cprime{$'$}
\begin{thebibliography}{ABDL{\etalchar{+}}19}

\bibitem[Aba96]{Abarbanel_book}
Henry D.~I. Abarbanel.
\newblock {\em Analysis of observed chaotic data}.
\newblock Institute for Nonlinear Science. Springer-Verlag, New York, 1996.

\bibitem[ABDL{\etalchar{+}}19]{LosslessAnalogCompression}
Giovanni Alberti, Helmut B\"{o}lcskei, Camillo De~Lellis, G\"{u}nther
  Koliander, and Erwin Riegler.
\newblock Lossless analog compression.
\newblock {\em IEEE Trans. Inform. Theory}, 65(11):7480--7513, 2019.

\bibitem[BAEFN93]{BAEFN93}
Asher Ben-Artzi, Alp Eden, Ciprian Foias, and Basil Nicolaenko.
\newblock H\"{o}lder continuity for the inverse of {M}a\~{n}\'{e}'s projection.
\newblock {\em J. Math. Anal. Appl.}, 178(1):22--29, 1993.

\bibitem[BG{\'{S}}20]{BGS20}
Krzysztof Bara\'{n}ski, Yonatan Gutman, and Adam {\'{S}}piewak.
\newblock A probabilistic {T}akens theorem.
\newblock {\em Nonlinearity}, 33(9):4940--4966, 2020.

\bibitem[BG{\'{S}}22a]{BGS22}
Krzysztof Bara\'{n}ski, Yonatan Gutman, and Adam {\'{S}}piewak.
\newblock On the {S}hroer-{S}auer-{O}tt-{Y}orke predictability conjecture for
  time-delay embeddings.
\newblock {\em Comm. Math. Phys.}, 391(2):609--641, 2022.

\bibitem[BG{\'{S}}22b]{BGSPredict}
Krzysztof Bara{\'{n}}ski, Yonatan Gutman, and Adam {\'{S}}piewak.
\newblock Prediction of dynamical systems from time-delayed measurements with
  self-intersections.
\newblock Preprint arXiv:2212.13509, 2022.

\bibitem[Bou10]{B10}
Jean Bourgain.
\newblock The discretized sum-product and projection theorems.
\newblock {\em J. Anal. Math.}, 112:193--236, 2010.

\bibitem[BP17]{BP17}
Christopher~J. Bishop and Yuval Peres.
\newblock {\em Fractals in probability and analysis}, volume 162 of {\em
  Cambridge Studies in Advanced Mathematics}.
\newblock Cambridge University Press, Cambridge, 2017.

\bibitem[BS07]{baraniuk2007compressive}
Richard Baraniuk and Philippe Steeghs.
\newblock Compressive radar imaging.
\newblock In {\em 2007 IEEE Radar Conference}, pages 128--133, 2007.

\bibitem[BSS{\'{S}}22]{BSSS22}
Bal\'{a}zs B\'{a}r\'{a}ny, K\'{a}roly Simon, Boris Solomyak, and Adam
  {\'{S}}piewak.
\newblock Typical absolute continuity for classes of dynamically defined
  measures.
\newblock {\em Adv. Math.}, 399:Paper No. 108258, 73, 2022.

\bibitem[Can06]{candes2006compressive}
Emmanuel~J. Cand{\`e}s.
\newblock Compressive sampling.
\newblock In {\em Proceedings of the International Congress of Mathematicians},
  volume~3, pages 1433--1452. Madrid, Spain, 2006.

\bibitem[CDS01]{chen1995atomic}
Scott~Shaobing Chen, David~L. Donoho, and Michael~A. Saunders.
\newblock Atomic decomposition by basis pursuit.
\newblock {\em SIAM Review}, 43(1):129--159, 2001.

\bibitem[CRT06a]{CandesRombergTao}
Emmanuel~J. Cand\`es, Justin Romberg, and Terence Tao.
\newblock Robust uncertainty principles: exact signal reconstruction from
  highly incomplete frequency information.
\newblock {\em IEEE Trans. Inform. Theory}, 52(2):489--509, 2006.

\bibitem[CRT06b]{candes2006stable}
Emmanuel~J. Cand\`es, Justin~K. Romberg, and Terence Tao.
\newblock Stable signal recovery from incomplete and inaccurate measurements.
\newblock {\em Comm. Pure Appl. Math.}, 59(8):1207--1223, 2006.

\bibitem[CT06]{candes2006near}
Emmanuel~J. Cand{\`e}s and Terence Tao.
\newblock Near-optimal signal recovery from random projections: universal
  encoding strategies?
\newblock {\em IEEE Trans. Inform. Theory}, 52(12):5406--5425, 2006.

\bibitem[DDT{\etalchar{+}}08]{duarte2008single}
Marco~F. Duarte, Mark~A. Davenport, Dharmpal Takhar, Jason~N. Laska, Ting Sun,
  Kevin~F. Kelly, and Richard~G. Baraniuk.
\newblock Single-pixel imaging via compressive sampling.
\newblock {\em IEEE Signal Process. Mag.}, 25(2):83--91, 2008.

\bibitem[DMM11]{donoho2011noise}
David~L. Donoho, Arian Maleki, and Andrea Montanari.
\newblock The noise-sensitivity phase transition in compressed sensing.
\newblock {\em IEEE Trans. Inform. Theory}, 57(10):6920--6941, 2011.

\bibitem[Don06]{donoho2006compressed}
David~L. Donoho.
\newblock Compressed sensing.
\newblock {\em IEEE Trans. Inform. Theory}, 52(4):1289--1306, 2006.

\bibitem[DOV22]{dabrowski2021integrability}
Damian D{\k{a}}browski, Tuomas Orponen, and Michele Villa.
\newblock Integrability of orthogonal projections, and applications to
  {F}urstenberg sets.
\newblock {\em Adv. Math.}, 407:Paper No. 108567, 34, 2022.

\bibitem[DT10]{donoho2010precise}
David~L. Donoho and Jared Tanner.
\newblock Precise undersampling theorems.
\newblock {\em Proceedings of the IEEE}, 98(6):913--924, 2010.

\bibitem[EFNT94]{EFNT94}
Alp Eden, Ciprian Foias, Basil Nicolaenko, and Roger~M. Temam.
\newblock {\em Exponential attractors for dissipative evolution equations},
  volume~37 of {\em RAM: Research in Applied Mathematics}.
\newblock Masson, Paris; John Wiley \& Sons, Ltd., Chichester, 1994.

\bibitem[Fal97]{FalconerTechniques}
Kenneth Falconer.
\newblock {\em Techniques in fractal geometry}.
\newblock John Wiley \& Sons, Ltd., Chichester, 1997.

\bibitem[Far16]{FarkasProjections16}
\'{A}bel Farkas.
\newblock Projections of self-similar sets with no separation condition.
\newblock {\em Israel J. Math.}, 214(1):67--107, 2016.

\bibitem[Fen23]{FengDimensionSelfAffine}
De-Jun Feng.
\newblock Dimension of invariant measures for affine iterated function systems.
\newblock {\em Duke Math. J.}, 172(4):701--774, 2023.

\bibitem[FFJ15]{FFJSixty}
Kenneth Falconer, Jonathan Fraser, and Xiong Jin.
\newblock Sixty years of fractal projections.
\newblock In {\em Fractal geometry and stochastics {V}}, volume~70 of {\em
  Progr. Probab.}, pages 3--25. Birkh\"{a}user/Springer, Cham, 2015.

\bibitem[FH96]{FalconerHowroydProj}
Kenneth~J. Falconer and John~D. Howroyd.
\newblock Projection theorems for box and packing dimensions.
\newblock {\em Math. Proc. Cambridge Philos. Soc.}, 119(2):287--295, 1996.

\bibitem[FH97]{FH97}
Kenneth~J. Falconer and John~D. Howroyd.
\newblock Packing dimensions of projections and dimension profiles.
\newblock {\em Math. Proc. Cambridge Philos. Soc.}, 121(2):269--286, 1997.

\bibitem[FO14]{FO14}
Katrin F\"{a}ssler and Tuomas Orponen.
\newblock On restricted families of projections in {$\Bbb R^3$}.
\newblock {\em Proc. Lond. Math. Soc. (3)}, 109(2):353--381, 2014.

\bibitem[FO17]{FO17}
Jonathan~M. Fraser and Tuomas Orponen.
\newblock The {A}ssouad dimensions of projections of planar sets.
\newblock {\em Proc. Lond. Math. Soc. (3)}, 114(2):374--398, 2017.

\bibitem[FR02]{FR02}
Peter~K. Friz and James~C. Robinson.
\newblock Constructing an elementary measure on a space of projections.
\newblock {\em J. Math. Anal. Appl.}, 267(2):714--725, 2002.

\bibitem[FR13]{FR13}
Simon Foucart and Holger Rauhut.
\newblock {\em A mathematical introduction to compressive sensing}.
\newblock Birkh\"auser/Springer, New York, 2013.

\bibitem[GK19]{geiger2019information}
Bernhard~C. Geiger and Tobias Koch.
\newblock On the information dimension of stochastic processes.
\newblock {\em IEEE Trans. Inform. Theory}, 65(10):6496--6518, 2019.

\bibitem[G{\'{S}}19]{GSNewBounds}
Yonatan Gutman and Adam {\'{S}}piewak.
\newblock New uniform bounds for almost lossless analog compression.
\newblock In {\em 2019 IEEE International Symposium on Information Theory
  (ISIT)}, pages 1702--1706, 2019.

\bibitem[G{\'{S}}20]{mmdimcompress}
Yonatan Gutman and Adam {\'{S}}piewak.
\newblock Metric mean dimension and analog compression.
\newblock {\em IEEE Trans. Inform. Theory}, 66(11):6977--6998, 2020.

\bibitem[Hig96]{higgins1996sampling}
John~Rowland Higgins.
\newblock {\em Sampling theory in {F}ourier and signal analysis:
  {F}oundations}.
\newblock Oxford University Press on Demand, 1996.

\bibitem[HK97]{HuntKaloshin97}
Brian~R. Hunt and Vadim~Yu. Kaloshin.
\newblock How projections affect the dimension spectrum of fractal measures.
\newblock {\em Nonlinearity}, 10(5):1031--1046, 1997.

\bibitem[HK99]{HK99}
Brian~R. Hunt and Vadim~Yu. Kaloshin.
\newblock Regularity of embeddings of infinite-dimensional fractal sets into
  finite-dimensional spaces.
\newblock {\em Nonlinearity}, 12(5):1263--1275, 1999.

\bibitem[HS09]{herman2009high}
Matthew~A. Herman and Thomas Strohmer.
\newblock High-resolution radar via compressed sensing.
\newblock {\em IEEE Trans. Signal Process.}, 57(6):2275--2284, 2009.

\bibitem[HS12]{HochmanShmerkinLocalEntropy}
Michael Hochman and Pablo Shmerkin.
\newblock Local entropy averages and projections of fractal measures.
\newblock {\em Ann. of Math. (2)}, 175(3):1001--1059, 2012.

\bibitem[HSY92]{Prevalence92}
Brian~R. Hunt, Tim Sauer, and James~A. Yorke.
\newblock Prevalence: a translation-invariant ``almost every'' on
  infinite-dimensional spaces.
\newblock {\em Bull. Amer. Math. Soc. (N.S.)}, 27(2):217--238, 1992.

\bibitem[HT94]{HuTaylorProjections}
Xiaoyu Hu and S.~James Taylor.
\newblock Fractal properties of products and projections of measures in {${\bf
  R}^d$}.
\newblock {\em Math. Proc. Cambridge Philos. Soc.}, 115(3):527--544, 1994.

\bibitem[Hut81]{HutchinsonFractals}
John~E. Hutchinson.
\newblock Fractals and self-similarity.
\newblock {\em Indiana Univ. Math. J.}, 30(5):713--747, 1981.

\bibitem[HW41]{HW41}
Witold Hurewicz and Henry Wallman.
\newblock {\em Dimension {T}heory}.
\newblock Princeton Mathematical Series, v. 4. Princeton University Press,
  Princeton, N. J., 1941.

\bibitem[JM98]{JMSections}
Maarit J\"{a}rvenp\"{a}\"{a} and Pertti Mattila.
\newblock Hausdorff and packing dimensions and sections of measures.
\newblock {\em Mathematika}, 45(1):55--77, 1998.

\bibitem[JP17]{jalali2017universal}
Shirin Jalali and H.~Vincent Poor.
\newblock Universal compressed sensing for almost lossless recovery.
\newblock {\em IEEE Transactions on Information Theory}, 63(5):2933--2953,
  2017.

\bibitem[Kec95]{K95}
Alexander~S. Kechris.
\newblock {\em Classical descriptive set theory}, volume 156 of {\em Graduate
  Texts in Mathematics}.
\newblock Springer-Verlag, New York, 1995.

\bibitem[KS18]{KSnote}
Istv\'{a}n Kolossv\'{a}ry and K\'{a}roly Simon.
\newblock Separation conditions for conformal {IFS}.
\newblock Unpublished note, 2018.

\bibitem[KS19]{KolossvarySimonGL}
Istv\'{a}n Kolossv\'{a}ry and K\'{a}roly Simon.
\newblock Triangular {G}atzouras-{L}alley-type planar carpets with overlaps.
\newblock {\em Nonlinearity}, 32(9):3294--3341, 2019.

\bibitem[LDP07]{lustig2007sparse}
Michael Lustig, David Donoho, and John~M Pauly.
\newblock Sparse {M}{R}{I}: The application of compressed sensing for rapid
  {M}{R} imaging.
\newblock {\em Magnetic resonance in medicine}, 58(6):1182--1195, 2007.

\bibitem[Mal98]{mallat1999wavelet}
St\'{e}phane Mallat.
\newblock {\em A wavelet tour of signal processing}.
\newblock Academic Press, Inc., San Diego, CA, 1998.

\bibitem[Mar54]{Marstrand}
John~M. Marstrand.
\newblock Some fundamental geometrical properties of plane sets of fractional
  dimensions.
\newblock {\em Proc. London Math. Soc. (3)}, 4:257--302, 1954.

\bibitem[Mat75]{Mattila-proj}
Pertti Mattila.
\newblock Hausdorff dimension, orthogonal projections and intersections with
  planes.
\newblock {\em Ann. Acad. Sci. Fenn. Ser. A I Math.}, 1(2):227--244, 1975.

\bibitem[Mat95]{mattila}
Pertti Mattila.
\newblock {\em Geometry of sets and measures in Euclidean spaces}, volume~44 of
  {\em Cambridge Studies in Advanced Mathematics}.
\newblock Cambridge University Press, Cambridge, 1995.

\bibitem[Mat04]{Mattila04}
Pertti Mattila.
\newblock Hausdorff dimension, projections, and the {F}ourier transform.
\newblock {\em Publ. Mat.}, 48(1):3--48, 2004.

\bibitem[Mat23]{MattilRectifSurvey2023}
Pertti Mattila.
\newblock {\em Rectifiability---a survey}, volume 483 of {\em London
  Mathematical Society Lecture Note Series}.
\newblock Cambridge University Press, Cambridge, 2023.

\bibitem[Mn81]{Mane81}
Ricardo Ma\~{n}\'{e}.
\newblock On the dimension of the compact invariant sets of certain nonlinear
  maps.
\newblock In {\em Dynamical systems and turbulence, {W}arwick 1980 ({C}oventry,
  1979/1980)}, volume 898 of {\em Lecture Notes in Math.}, pages 230--242.
  Springer, Berlin-New York, 1981.

\bibitem[Ols02]{olson2002bouligand}
Eric Olson.
\newblock Bouligand dimension and almost lipschitz embeddings.
\newblock {\em Pacific Journal of Mathematics}, 202(2):459--474, 2002.

\bibitem[Orp21]{O21}
Tuomas Orponen.
\newblock On the {A}ssouad dimension of projections.
\newblock {\em Proc. Lond. Math. Soc. (3)}, 122(2):317--351, 2021.

\bibitem[PS95]{PollicottSimonDeleted}
Mark Pollicott and K\'{a}roly Simon.
\newblock The {H}ausdorff dimension of {$\lambda$}-expansions with deleted
  digits.
\newblock {\em Trans. Amer. Math. Soc.}, 347(3):967--983, 1995.

\bibitem[PS00]{PS00}
Yuval Peres and Wilhelm Schlag.
\newblock Smoothness of projections, {B}ernoulli convolutions, and the
  dimension of exceptions.
\newblock {\em Duke Math. J.}, 102(2):193--251, 2000.

\bibitem[PU10]{PU10}
Feliks Przytycki and Mariusz Urba{\'n}ski.
\newblock {\em Conformal fractals: ergodic theory methods}, volume 371 of {\em
  London Mathematical Society Lecture Note Series}.
\newblock Cambridge University Press, Cambridge, 2010.

\bibitem[Rap17]{Rapaport17}
Ariel Rapaport.
\newblock A self-similar measure with dense rotations, singular projections and
  discrete slices.
\newblock {\em Adv. Math.}, 321:529--546, 2017.

\bibitem[Rap20]{Rapaport20}
Ariel Rapaport.
\newblock On self-similar measures with absolutely continuous projections and
  dimension conservation in each direction.
\newblock {\em Ergodic Theory Dynam. Systems}, 40(12):3438--3456, 2020.

\bibitem[RJEP17]{CompressionCS17}
Farideh~E. Rezagah, Shirin Jalali, Elza Erkip, and H.~Vincent Poor.
\newblock Compression-based compressed sensing.
\newblock {\em IEEE Trans. Inform. Theory}, 63(10):6735--6752, 2017.

\bibitem[Rob11]{Rob11}
James~C. Robinson.
\newblock {\em Dimensions, embeddings, and attractors}, volume 186 of {\em
  Cambridge Tracts in Mathematics}.
\newblock Cambridge University Press, Cambridge, 2011.

\bibitem[Roh52]{Rohlin52}
Vladimir~A. Rohlin.
\newblock On the fundamental ideas of measure theory.
\newblock {\em Amer. Math. Soc. Translation}, 1952(71):55, 1952.

\bibitem[RS14]{RamsSimonPercolations14}
Micha{\l} Rams and K\'{a}roly Simon.
\newblock The dimension of projections of fractal percolations.
\newblock {\em J. Stat. Phys.}, 154(3):633--655, 2014.

\bibitem[RS15]{RamsSimonPercolations15}
Micha{\l} Rams and K\'{a}roly Simon.
\newblock Projections of fractal percolations.
\newblock {\em Ergodic Theory Dynam. Systems}, 35(2):530--545, 2015.

\bibitem[RS19]{RossiShmerkinHolderCoverings}
Eino Rossi and Pablo Shmerkin.
\newblock H\"{o}lder coverings of sets of small dimension.
\newblock {\em J. Fractal Geom.}, 6(3):285--299, 2019.

\bibitem[Rud87]{R87}
Walter Rudin.
\newblock {\em Real and complex analysis}.
\newblock McGraw-Hill Book Co., New York, third edition, 1987.

\bibitem[Sim12]{SimmonsRohlin}
David Simmons.
\newblock Conditional measures and conditional expectation; {R}ohlin's
  disintegration theorem.
\newblock {\em Discrete Contin. Dyn. Syst.}, 32(7):2565--2582, 2012.

\bibitem[SM90]{sm90nonlinear}
George Sugihara and Robert May.
\newblock Nonlinear forecasting as a way of distinguishing chaos from
  measurement error in time series.
\newblock {\em Nature}, 344(6268):734--741, 1990.

\bibitem[Sol23]{SolomyakTransversSurvey}
Boris Solomyak.
\newblock Notes on the transversality method for iterated function systems -- a
  survey.
\newblock {\em Math. Comput. Appl.}, 28(3), 2023.

\bibitem[SRAB17]{stotz2017almost}
David Stotz, Erwin Riegler, Eirikur Agustsson, and Helmut B\"{o}lcskei.
\newblock Almost lossless analog signal separation and probabilistic
  uncertainty relations.
\newblock {\em IEEE Trans. Inform. Theory}, 63(9):5445--5460, 2017.

\bibitem[SS07]{SimonSolomyakVisibility}
K\'{a}roly Simon and Boris Solomyak.
\newblock Visibility for self-similar sets of dimension one in the plane.
\newblock {\em Real Anal. Exchange}, 32(1):67--78, 2006/07.

\bibitem[SSOY98]{SSOY98}
Christian~G. Schroer, Tim Sauer, Edward Ott, and James~A. Yorke.
\newblock Predicting chaos most of the time from embeddings with
  self-intersections.
\newblock {\em Phys. Rev. Lett.}, 80:1410--1413, 1998.

\bibitem[SSU01a]{SSUPacific}
K\'{a}roly Simon, Boris Solomyak, and Mariusz Urba\'{n}ski.
\newblock Hausdorff dimension of limit sets for parabolic {IFS} with overlaps.
\newblock {\em Pacific J. Math.}, 201(2):441--478, 2001.

\bibitem[SSU01b]{SSUTrans}
K\'{a}roly Simon, Boris Solomyak, and Mariusz Urba\'{n}ski.
\newblock Invariant measures for parabolic {IFS} with overlaps and random
  continued fractions.
\newblock {\em Trans. Amer. Math. Soc.}, 353(12):5145--5164, 2001.

\bibitem[SY97]{SauerYorke97}
Timothy~D. Sauer and James~A. Yorke.
\newblock Are the dimensions of a set and its image equal under typical smooth
  functions?
\newblock {\em Ergodic Theory Dynam. Systems}, 17(4):941--956, 1997.

\bibitem[SYC91]{SYC91}
Timothy~D. Sauer, James~A. Yorke, and Martin Casdagli.
\newblock Embedology.
\newblock {\em J. Statist. Phys.}, 65(3-4):579--616, 1991.

\bibitem[Tak81]{T81}
Floris Takens.
\newblock Detecting strange attractors in turbulence.
\newblock In {\em Dynamical systems and turbulence, {W}arwick 1980}, volume 898
  of {\em Lecture Notes in Math.}, pages 366--381. Springer, Berlin-New York,
  1981.

\bibitem[WV10]{WV10}
Yihong Wu and Sergio Verd{\'u}.
\newblock R\'enyi information dimension: Fundamental limits of almost lossless
  analog compression.
\newblock {\em IEEE Trans. Inform. Theory}, 56(8):3721--3748, 2010.

\end{thebibliography}

\end{document}